\newenvironment{enumeratei}{\begin{enumerate}[\upshape (i)]}%
{\end{enumerate}}
\newenvironment{enumerater}{\begin{enumerate}[\upshape (1)]}%
{\end{enumerate}}
\newcommand{\pup}[1]{\textup{(}{#1}\textup{)}}
\newcommand{\msem}{meet-semi\-group}
\newcommand{\ext}[2]{{#1}\langle{#2}\rangle}
\newcommand{\eqdef}{\underset{\mathrm{def}}{=}}
\newcommand{\Var}{\operatorname{{\mathbf{Var}}}}
\newcommand{\Matn}{\operatorname{Mat}}
\newcommand{\Mat}[1]{\Matn({#1})}
\newcommand{\Matp}[2]{\operatorname{M}^{{\oplus}}_{{#1}}\left({#2}\right)}
\newcommand{\Radn}{\operatorname{Rad}}
\newcommand{\Rad}[2]{\Radn_{{#1}}\left({#2}\right)}
\newcommand{\Rd}[1]{\operatorname{Rad}\left({#1}\right)}
\newcommand{\Wrn}{\operatorname{Wr}}
\newcommand{\Wr}[2]{\Wrn_{{#1}}\left({#2}\right)}
\newcommand{\Bis}{\mathbf{Bis}}
\newcommand{\pz}[1]{{#1}^{\sqcup0}}
\newcommand{\ip}[1]{\lfloor{#1}\rfloor}
\newcommand{\sdf}{\mathbin{\text{\rotatebox[origin=c]{90}{$\oslash$}}}}
\newcommand{\spl}{\mathbin{\triangledown}}
\DeclareMathOperator{\card}{card}
\DeclareMathOperator{\Idp}{Idp}
\DeclareMathOperator{\NSub}{NSub}
\DeclareMathOperator{\Int}{Int}
\DeclareMathOperator{\Typ}{Typ}
\DeclareMathOperator{\typ}{typ}
\newcommand{\ga}{\alpha}
\newcommand{\gb}{\beta}
\newcommand{\gf}{\varphi}
\newcommand{\gs}{\sigma}
\newcommand{\gO}{\Omega}
\newcommand{\gS}{\Sigma}
\newcommand{\fS}{\mathfrak{S}}
\newcommand{\sd}{\mathbin{\smallsetminus}}
\newcommand{\lep}{\leq^{+}}
\newcommand{\one}{\mathbf{1}}
\newcommand{\ol}[1]{\overline{#1}}
\newcommand{\pI}[1]{\bigl({#1}\bigr)}
\newcommand{\pII}[1]{\Bigl({#1}\Bigr)}
\newcommand{\pIII}[1]{\biggl({#1}\biggr)}
\newcommand{\set}[1]{\left\{#1\right\}}
\newcommand{\setm}[2]{\set{{#1}\mid{#2}}}
\newcommand{\vecm}[2]{\left({#1}\mid{#2}\right)}
\newcommand{\dd}{\mathbf{d}}
\newcommand{\rr}{\mathbf{r}}
\newcommand{\id}{\mathrm{id}}
\newcommand{\es}{\varnothing}
\newcommand{\Fbn}{\operatorname{F_{bis}}}
\newcommand{\Fb}[1]{\Fbn(#1)}
\newcommand{\Fin}{\operatorname{F_{inv}}}
\newcommand{\Fi}[1]{\Fin(#1)}
\newcommand{\Ubn}{\operatorname{U_{bis}}}
\newcommand{\Ub}[1]{\Ubn(#1)}
\newcommand{\bLb}{\boldsymbol{\Lambda}_{\mathrm{Bis}}}
\newcommand{\bLg}{\boldsymbol{\Lambda}_{\mathrm{Grp}}}
\newcommand{\tbLg}{\widetilde{\boldsymbol{\Lambda}}_{\mathrm{Grp}}}
\newcommand{\NN}{\mathbb{N}}
\newcommand{\ZZ}{\mathbb{Z}}
\DeclareMathOperator{\Con}{Con}
\DeclareMathOperator{\dom}{dom}
\newcommand{\fI}{\mathfrak{I}}
\newcommand{\cB}{{\mathcal{B}}}
\newcommand{\cC}{{\mathcal{C}}}
\newcommand{\cF}{{\mathcal{F}}}
\newcommand{\cG}{{\mathcal{G}}}
\newcommand{\cH}{{\mathcal{H}}}
\newcommand{\cI}{{\mathcal{I}}}
\newcommand{\cK}{{\mathcal{K}}}
\newcommand{\cV}{{\mathcal{V}}}
\numberwithin{equation}{section}
\newtheorem*{stat}{\name}
\newcommand{\name}{testing}
\newenvironment{all}[1]{\renewcommand{\name}{#1}\begin{stat}}
                        {\end{stat}}
\theoremstyle{plain}
\newtheorem{theorem}{Theorem}[section]
\newtheorem{proposition}[theorem]{Proposition}
\newtheorem{corollary}[theorem]{Corollary}
\newtheorem{lemma}[theorem]{Lemma}
\newtheorem{examplepf}[theorem]{Example}
\theoremstyle{definition}
\newtheorem{definition}[theorem]{Definition}
\newtheorem{notation}[theorem]{Notation}
\theoremstyle{remark}
\newcommand{\qedc}{{\qed}~{\rm Claim~{\theclaim}.}}
\newcommand{\qedsc}{{\qed}~{\rm Claim.}}
\numberwithin{figure}{section}
\numberwithin{table}{section}
\newcommand{\ba}{\boldsymbol{a}}
\newcommand{\bb}{\boldsymbol{b}}
\newcommand{\bc}{\boldsymbol{c}}
\newcommand{\be}{\boldsymbol{e}}
\newcommand{\bx}{\boldsymbol{x}}
\newcommand{\bga}{\boldsymbol{\alpha}}
\newcommand{\bgf}{\boldsymbol{\varphi}}
\newcommand{\bgy}{\boldsymbol{\psi}}
\newcommand{\gq}{\theta}
\newcommand{\bgq}{\boldsymbol{\theta}}
\newcommand{\bI}{\boldsymbol{I}}
\newcommand{\va}{\mathsf{a}}
\newcommand{\vb}{\mathsf{b}}
\newcommand{\vp}{\mathsf{p}}
\newcommand{\vq}{\mathsf{q}}
\newcommand{\vr}{\mathsf{r}}
\newcommand{\vs}{\mathsf{s}}
\newcommand{\vx}{\mathsf{x}}
\newcommand{\vu}{\mathsf{u}}
\newcommand{\scL}{\mathbin{\mathscr{L}}}
\newcommand{\scR}{\mathbin{\mathscr{R}}}
\newcommand{\scD}{\mathbin{\mathscr{D}}}
\title{Varieties of Boolean inverse semigroups}
\author[F. Wehrung]{Friedrich Wehrung}
\address{LMNO, CNRS UMR 6139\\
D\'epartement de Math\'ematiques\\
Universit\'e de Caen Normandie\\
14032 Caen cedex\\
France}
\email{friedrich.wehrung01@unicaen.fr}
\urladdr{http://www.math.unicaen.fr/\~{}wehrung}
\date{\today}
\subjclass[2010]{20M18; 08B10; 08B15; 06F05; 08A30; 08A55; 08B05; 08B20; 20E22; 20M14}
\keywords{Semigroup; monoid; inverse; Boolean; bias; variety; group; wreath product; additive homomorphism; conical; refinement monoid; index; type monoid; generalized rook matrix; fully group-matricial; radical; congruence; residually finite}
\begin{document}

\begin{abstract}
In an earlier work, the author observed that Boolean inverse semigroups, with semigroup homomorphisms preserving finite orthogonal joins, form a congruence-permutable variety of algebras, called \emph{biases}.
We give a full description of varieties of biases in terms of varieties of groups:
\begin{enumerater}
\item Every free bias is residually finite.
In particular, the word problem for free biases is decidable.

\item Every proper variety of biases contains a largest finite symmetric inverse semigroup, and it is generated by its members that are generalized rook matrices over groups with zero.

\item There is an order-preserving, one-to-one correspondence between proper varieties of biases and certain finite sequences of varieties of groups, descending in a strong sense defined in terms of wreath products by finite symmetric groups.
\end{enumerater}
\end{abstract}

\maketitle

\section{Introduction}\label{S:Intro}
\emph{Boolean inverse semigroups} are an abstraction of semigroups of partial transformations which are closed under finite disjoint unions, originally studied by Tarski's school (cf.~\cite{Tars49}).
These objects have been for the last decade an active topic of research, see, for example, \cite{KLLR15,Laws10,Laws12,LaLe13,LawSco14}.
By definition, an inverse semigroup~$S$ with zero is Boolean, if its semilattice of idempotents is (generalized) Boolean and~$S$ has finite orthogonal joins.
(We refer to Section~\ref{S:Basic} for precise definitions.)

Unlike classes of structures such as groups, inverse semigroups, modules, rings, Lie algebras, the class of Boolean inverse semigroups is not defined as a variety of algebras in the sense of universal algebra: while the multiplication and the inversion of an inverse semigroup are \emph{full} operations, orthogonal join is only a \emph{partial} operation.
The author introduced in~\cite{WBIS} two full operations~$\sdf$ and~$\spl$ (cf.~\eqref{Eq:Defsdf} and~\eqref{Eq:Defspl} for precise definitions), defined, on every Boolean inverse semigroup, in terms of multiplication, inversion, and the partial operation of orthogonal join, such that the semigroup homomorphisms preserving~$\sdf$ and~$\spl$ are exactly the \emph{additive semigroup homomorphisms};
by definition, a semigroup homomorphism is additive if it preserves all finite orthogonal joins.
Moreover, Boolean inverse semigroups can be characterized \emph{via} a finite system of identities in the similarity type $(0,{}^{-1},\cdot,\sdf,\spl)$.
The models of those identities are called \emph{biases}.
Hence, the category of biases, with bias homomorphisms, is identical to the category of Boolean inverse semigroups, with additive semigroup homomorphisms.
We also prove in~\cite{WBIS} that the variety of biases is \emph{congruence-permutable}, which makes Boolean inverse semigroups much closer, in spirit, to groups and rings than to semigroups.

The question was thus raised, during the open problem session of the 2016 Workshop on New Directions in Inverse Semigroups in Ottawa, whether there could be a convenient way to describe the \emph{varieties of biases}, that is, the solution classes, within biases, of sets of identities in the similarity type of biases.
In this paper we solve that question, by proving that any variety~$\cV$ of biases, distinct from the variety of all biases (we say that~$\cV$ is \emph{proper}), is determined by the (necessarily finite) descending sequence of group varieties~$\cG_n$, where~$\cG_n$ (which we will call the \emph{$n$-th radical} of~$\cV$, see Notation~\ref{Not:nRad}) is the variety of all groups~$G$ such that the bias~$\Matp{n}{\pz{G}}$ of all generalized rook matrices of order~$n$ over the Boolean inverse semigroup $\pz{G}=G\cup\set{0}$ (we call such structures \emph{groups with zero}) belongs to~$\cV$.
Moreover, we describe, in terms of wreath products by finite symmetric groups, which descending sequences of varieties of groups arise in this way, thus yielding, in Theorem~\ref{T:VarOrdBias}, an isomorphism between proper varieties of biases and certain finite descending sequences of group varieties.
In particular, the lattice of all proper varieties of biases embeds, as a sublattice, into a countable power of the lattice of all group varieties enlarged by the empty class.

Let us now summarize, section by section, the organization of the paper.

In \textbf{Section~\ref{S:Basic}}, we recall the main definitions and facts needed through the paper.

In \textbf{Section~\ref{S:FreeBias}}, we analyze the structure of the universal bias of an arbitrary inverse semigroup, proving in particular that finiteness of the latter implies finiteness of the former.
We deduce the following result:

\begin{all}{Theorem~\ref{T:BISdecid}}
Every free bias is residually finite.
In particular, the word problem for free biases is decidable.
\end{all}

\textbf{Section~\ref{S:Rook}} is devoted to introducing a few elementary tools on generalized rook matrices and the type monoid.
This yields, as a byproduct, the following result:

\begin{all}{Proposition~\ref{P:BISFinIdp}}
The Boolean inverse monoids with finite sets of idempotents are exactly the finite products of monoids of the form~$\Matp{n}{\pz{G}}$.
\end{all}

In \textbf{Section~\ref{S:VarBIS}} we prove that the nonzero elements, of the type monoid of a (finitely) subdirectly irreducible bias, form a downward directed subset.

In \textbf{Section~\ref{S:GenVarBias}} we prove the following result:

\begin{all}{Theorem~\ref{T:VarMnG}}
Every proper variety of biases is generated by its members which are biases of generalized rook matrices over groups with zero, with an upper bound on the order of those matrices.
\end{all}

In \textbf{Section~\ref{S:GRMgp}} we observe that the invertible elements of a bias of generalized rook matrices, over a group with zero, can be described as a wreath product with a finite symmetric group.
This enables us to express, in group-theoretical terms, embeddability questions between biases of generalized rook matrices over groups with zero.

In \textbf{Section~\ref{S:Proj}} we prove that finite symmetric biases are projective, and we state a projectivity property of biases of generalized rook matrices, over groups with zero, within the class of all biases with cancellative type monoid.

In \textbf{Section~\ref{S:BdedInd}} we express the monoid-theoretical concept of index, studied for type monoids of biases in Section~\ref{S:GenVarBias}, in terms of a certain inverse semigroup identity, thus expressing, in the language of inverse semigroups, the concept of index of a variety of biases.

In \textbf{Section~\ref{S:VarOrder}} we prove our main technical result, Lemma~\ref{L:VarOrdbias}, which gives an exact description of the variety order on biases of generalized rook matrices over groups with zero.
This enables us to prove our main result:

\begin{all}{Theorem~\ref{T:VarOrdBias}}
There is a one-to-one, order-preserving correspondence between proper varieties of biases and finite, decreasing sequences $\vecm{\cG_k}{1\leq k\leq n}$ of varieties of groups, such that for all positive integers~$k$, $l$ with $kl\leq n$ and every $G\in\cG_{kl}$, the wreath product $G\wr\fS_k$ belongs to~$\cG_l$.
\end{all}

The sequence $\vecm{\cG_k}{1\leq k\leq n}$ associated to a variety~$\cV$ of biases is given as follows: $n$ is the largest nonnegative integer such that~$\cI_n\in\cV$ (the \emph{index} of~$\cV$, see Definition~\ref{D:Index}) and whenever $1\leq k\leq n$, the \emph{$k$-th radical}~$\cG_k$ is the variety of all groups~$G$ such that the bias~$\Matp{k}{\pz{G}}$, of all generalized rook matrices of order~$k$ over the bias~$\pz{G}$, belongs to~$\cV$.

\section{Basic concepts}\label{S:Basic}

An \emph{inverse semigroup} (cf.~\cite{Howie,Laws98}) is a semigroup~$S$ where every $x\in S$ has a unique inverse, that is, an element~$x^{-1}$ such that $x=xx^{-1}x$ and $x^{-1}=x^{-1}xx^{-1}$.
Every group, or every semilattice, is an inverse semigroup.

We shall denote by~$\Idp S$ the set of all idempotent elements in a semigroup~$S$.
For every element~$x$ in an inverse semigroup~$S$, the elements $\dd(x)=x^{-1}x$ and $\rr(x)=xx^{-1}$ are both idempotent.
The \emph{natural ordering} between elements~$x$ and~$y$ of~$S$, simply denoted by $x\leq y$, can be defined, among others, by any of the equivalent statements $x=y\dd(x)$ and $x=\rr(x)y$.
Recall that \emph{Green's relations}~$\scL$, $\scR$, and~$\scD$ can be defined on~$S$ by
 \begin{align*}
 x\scL y&\quad\text{if}\quad
 \dd(x)=\dd(y)\,;\\
 x\scR y&\quad\text{if}\quad
 \rr(x)=\rr(y)\,,
 \end{align*}
$\scD=\scL\circ\scR=\scR\circ\scL$ (cf. \cite[Proposition~II.1.3]{Howie}).
The relation~$\scD$ takes a particularly convenient form on the idempotent elements: namely, for all $a,b\in\Idp S$, the relation $a\scD b$ holds if{f} there exists $x\in S$ such that $a=\dd(x)$ and $b=\rr(x)$.

For a semigroup~$S$, we shall denote by~$\pz{S}$ the semigroup obtained by adding to~$S$ a new zero element~$0$ (i.e., $0\cdot x=x\cdot0=0$ for every~$x$).
In particular, if~$S$ is an inverse semigroup, then so is~$\pz{S}$.

Elements~$x$ and~$y$ in an inverse semigroup with zero are \emph{orthogonal}, in notation $x\perp y$, if $x^{-1}y=xy^{-1}=0$.
An inverse semigroup~$S$ with zero is \emph{Boolean} if $\Idp S$ is a generalized Boolean algebra and any two orthogonal elements~$x$ and~$y$ in~$S$ have a join with respect to the natural ordering, then denoted by $x\oplus y$.

For any elements~$x$ and~$y$ in a Boolean inverse semigroup~$S$ such that the meet $x\wedge y$ exists, we denote by~$x\sd y$ the unique element such that $x=(x\wedge y)\oplus(x\sd y)$.
Observe, in particular, that $x\sd y$ is always defined if~$x$ and~$y$ are compatible (i.e., $x^{-1}y$ and $xy^{-1}$ are both idempotent).
This encompasses the case where~$x$ and~$y$ are both idempotent, and also the one where~$x$ and~$y$ are comparable (i.e., $x\leq y$ or $y\leq x$).

Important examples of Boolean inverse semigroups are the finite symmetric inverse semigroups~$\fI_n$, for nonnegative integers~$n$, consisting of all partial one-to-one functions on the set $[n]=\set{1,2,\dots,n}$ under composition.

Every commutative monoid~$M$ can be endowed with a partial preordering~$\lep$, defined by
 \[
 x\lep y\qquad\text{if }x+z=y\text{ for some }z\in M\,.
 \]
We say that~$M$ is
\begin{itemize}
\item[---] \emph{conical} if $x+y=0$ implies that $x=y=0$, for all $x,y\in M$;

\item[---] a \emph{refinement monoid} if for all $a_0,a_1,b_0,b_1\in M$ such that $a_0+a_1=b_0+b_1$, there are elements $c_{i,j}\in M$, for $i,j\in\set{0,1}$, such that each $a_i=c_{i,0}+c_{i,1}$ and each $b_i=c_{0,i}+c_{1,i}$.

\end{itemize}
A partially ordered abelian group $(G,+,0,\leq)$ is a \emph{dimension group} if it is directed (as a poset), unperforated (i.e., $0\leq mx$ implies that $0\leq x$, whenever~$m$ is a positive integer and $x\in G$), and the positive cone $G^+\eqdef\setm{x\in G}{0\leq x}$ is a refinement monoid.

The commutative monoids we shall be mainly concerned with are the type monoids $\Typ S$, for Boolean inverse semigroups~$S$.
By definition, $\Typ S$ is the universal monoid of the partial monoid~$\Int S$ of all $\scD$-classes of elements of~$S$ (which we call the \emph{type interval} of~$S$), endowed with the partial addition defined by
 \[
 x/{\scD}+y/{\scD}=(x\oplus y)/{\scD}\,,
 \quad\text{whenever }x,y\in S\text{ are orthogonal}\,.
 \]
Moreover, as in~\cite{WBIS}, we shall write~$\typ_S(x)$, or sometimes simply~$\typ(x)$, instead of~$x/{\scD}$.
Since the canonical map from~$\Int S$ to~$\Typ S$ is one-to-one, $\typ(x)=\typ(y)$ if{f} $x\scD y$, for all $x,y\in S$.
By \cite[Corollary~4-1.4]{WBIS}, $\Int S$ is a lower interval of~$\Typ S$, generating~$\Typ S$ as a monoid, and~$\Typ S$ is a conical refinement monoid.

For a positive integer~$n$ and a Boolean inverse semigroup~$S$, a matrix $x=(x_{i,j})_{(i,j)\in[n]\times[n]}$ with entries in~$S$ is a \emph{generalized rook matrix of order~$n$} (cf. \cite[\S~4.5]{Wallis2013}, \cite{KLLR15}, also \cite[Section~3.5]{WBIS}) if the equalities $x_{i,j}^{-1}x_{i,k}=x_{j,i}x_{k,i}^{-1}=0$ hold whenever $i,j,k\in[n]$ with $j\neq k$.
The generalized rook matrices of order~$n$ over a Boolean inverse semigroup~$S$ form a Boolean inverse semigroup, denoted as in~\cite{WBIS} by~$\Matp{n}{S}$.
As in~\cite{WBIS}, we denote by $x_{(i,j)}$ the generalized rook matrix with $(i,j)$th entry equal to~$x$ and all other entries equal to zero, for every $x\in S$ and every $(i,j)\in[n]\times[n]$.

The following easy result is contained in \cite[Proposition 3-5.3]{WBIS}.

\begin{proposition}\label{P:IdpMnS}
Let~$S$ be a Boolean inverse semigroup and let~$n$ be a positive integer.
Then the idempotent elements of~$\Matp{n}{S}$ are exactly the diagonal matrices with idempotent entries.
\end{proposition}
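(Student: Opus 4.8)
The plan is to reduce the whole statement to one entrywise computation. Recall (see \cite[\S~3.5]{WBIS}) that in $\Matp{n}{S}$ one has $(x^{-1})_{i,j}=(x_{j,i})^{-1}$ and $(xy)_{i,j}=\bigoplus_{k\in[n]}x_{i,k}y_{k,j}$ for all $i,j\in[n]$, the displayed join being an orthogonal one; that this join is well defined is precisely where the defining identities of generalized rook matrices enter, and it is part of the already-established fact that $\Matp{n}{S}$ is a Boolean inverse semigroup.

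First I would dispose of the easy inclusion. If $e\in\Matp{n}{S}$ is diagonal with every $e_{i,i}\in\Idp S$, then $e$ is a generalized rook matrix, since for $j\neq k$ at least one factor vanishes in each of $e_{i,j}^{-1}e_{i,k}$ and $e_{j,i}e_{k,i}^{-1}$; and in $(e^2)_{i,j}=\bigoplus_k e_{i,k}e_{k,j}$ only the term $k=i=j$ survives, so $(e^2)_{i,i}=e_{i,i}^2=e_{i,i}$ and $(e^2)_{i,j}=0$ for $i\neq j$, whence $e^2=e$.

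For the converse I would use the general fact that the idempotents of any inverse semigroup are exactly the elements $\dd(x)=x^{-1}x$; in particular every $e\in\Idp\Matp{n}{S}$ equals $\dd(e)$, so it is enough to show that $\dd(x)$ is diagonal with idempotent diagonal entries for every $x\in\Matp{n}{S}$. From the two formulas, $\dd(x)_{i,j}=(x^{-1}x)_{i,j}=\bigoplus_{k\in[n]}x_{k,i}^{-1}x_{k,j}$. For $i\neq j$ each summand $x_{k,i}^{-1}x_{k,j}$ has the shape ``$x_{a,b}^{-1}x_{a,c}$ with $b\neq c$'', hence is $0$ by the definition of a generalized rook matrix, so $\dd(x)_{i,j}=0$; and for $i=j$, $\dd(x)_{i,i}=\bigoplus_k x_{k,i}^{-1}x_{k,i}=\bigoplus_k\dd(x_{k,i})$ is a finite join of idempotents of $S$, hence an idempotent of $S$ because $\Idp S$ is a generalized Boolean algebra. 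This gives the claim.

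I do not expect any genuine obstacle; the only points that need a little care are invoking the right one of the two symmetric rook identities at the right moment --- here the ``row'' identity $x_{k,i}^{-1}x_{k,j}=0$ kills the off-diagonal entries of $\dd(x)$, while symmetrically one could work with $\rr(x)=xx^{-1}$ and the ``column'' identity --- and checking that the orthogonal joins in the matrix product are legitimate, which is absorbed into the known fact that $\Matp{n}{S}$ is a Boolean inverse semigroup.
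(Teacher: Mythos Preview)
Your proof is correct. The paper itself does not prove this proposition but simply cites \cite[Proposition~3-5.3]{WBIS}, so there is no in-paper argument to compare against; your direct computation of $\dd(x)$ entrywise, using the rook identity $x_{k,i}^{-1}x_{k,j}=0$ for $i\neq j$, is exactly the natural route and is presumably what lies behind the cited reference.
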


\begin{definition}\label{D:Dcanc}
A Boolean inverse semigroup~$S$ is \emph{$\scD$-cancellative} if the conjunction of $a\oplus b=a'\oplus b'$ and $a\scD a'$ implies that $b\scD b'$, for all $a,b,a',b'\in\Idp S$ such that $a\perp b$ and $a'\perp b'$.
\end{definition}

Recall that an inverse monoid is \emph{factorizable} if for every $x\in S$ there is an invertible element $g\in S$ such that $x\leq g$.

\begin{proposition}\label{P:Dcanc}
A Boolean inverse semigroup~$S$ is $\scD$-cancellative if{f} its type monoid~$\Typ S$ is cancellative.
Furthermore, if~$S$ is unital, then this is equivalent to~$S$ be factorizable.
\end{proposition}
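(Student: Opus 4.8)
The plan is to treat the two assertions separately. For the first equivalence, I would unwind the definition of $\Typ S$ as the universal monoid of the partial monoid $\Int S$. The natural map $\typ_S$ sends orthogonal pairs $a\perp b$ in $\Idp S$ to $\typ_S(a)+\typ_S(b)$, and by \cite[Corollary~4-1.4]{WBIS} it embeds $\Int S$ as a generating lower interval of $\Typ S$. For the forward direction, assume $S$ is $\scD$-cancellative; to show $\Typ S$ is cancellative it suffices, since $\Typ S$ is a conical refinement monoid generated by the lower interval $\Int S$, to verify cancellativity on elements of $\Int S$, i.e.\ to show that $\typ_S(a)+x=\typ_S(a)+y$ in $\Typ S$ with $x,y\in\Typ S$ implies $x=y$. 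Here one uses refinement in $\Typ S$ together with the structure of $\Int S$ inside $\Typ S$: decompose $x$ and $y$ as finite sums of types of idempotents (using that $\Int S$ generates $\Typ S$ and that, inside a Boolean inverse semigroup, any element is $\scD$-equivalent to an idempotent, namely $\dd(x)$), reduce by refinement to a statement about a single orthogonal sum of idempotents, and invoke Definition~\ref{D:Dcanc}. For the converse, if $\Typ S$ is cancellative and $a\oplus b=a'\oplus b'$ with $a\scD a'$, then applying $\typ_S$ gives $\typ_S(a)+\typ_S(b)=\typ_S(a')+\typ_S(b')$ with $\typ_S(a)=\typ_S(a')$; cancel to get $\typ_S(b)=\typ_S(b')$, which by injectivity of the canonical map on $\Int S$ means $b\scD b'$.

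For the second assertion, assume in addition that $S$ is unital (has a largest idempotent $\one=\rr(\one)=\dd(\one)$). I would show that factorizability is equivalent to cancellativity of $\Typ S$ by a direct argument on idempotents. If $S$ is factorizable and $a\oplus b=a'\oplus b'$ with $a\scD a'$, pick $g$ invertible with, say, an element $x\leq g$ realizing $a=\dd(x)$, $a'=\rr(x)$; conjugating by $g$ carries the decomposition $\one\geq a\oplus(\text{complement})$ to one built on $a'$, and since $g$ is a bijection preserving orthogonal joins it transports the complement of $a$ to the complement of $a'$ inside any common idempotent containing both $a\oplus b$ and $a'\oplus b'$; one then extracts $b\scD b'$. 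Conversely, suppose $\Typ S$ is cancellative. Given $x\in S$, consider $\dd(x)$ and $\rr(x)$: they are $\scD$-equivalent idempotents, hence by cancellativity their complements $\one\sd\dd(x)$ and $\one\sd\rr(x)$ in $\Idp S$ are also $\scD$-equivalent (apply the $\scD$-cancellation property, which holds by the first part, to $\dd(x)\oplus(\one\sd\dd(x))=\one=\rr(x)\oplus(\one\sd\rr(x))$). Let $y\in S$ witness $\one\sd\dd(x)\scD\one\sd\rr(x)$; then $g=x\oplus y$ is defined (the summands are orthogonal since their domains and ranges are orthogonal idempotents), satisfies $\dd(g)=\rr(g)=\one$, hence is invertible, and $x\leq g$. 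So $S$ is factorizable.

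The main obstacle I expect is the reduction step in the forward direction of the first equivalence: passing from cancellativity of $\Typ S$ on the generating lower interval $\Int S$ to cancellativity on all of $\Typ S$, and organizing the refinement-monoid bookkeeping so that an arbitrary equation $u+x=u+y$ in $\Typ S$ is broken into finitely many instances controlled by Definition~\ref{D:Dcanc}. This is where the conical refinement structure of $\Typ S$ and the fact that $\Int S$ is a lower interval generating $\Typ S$ (both from \cite[Corollary~4-1.4]{WBIS}) do the real work; the idempotent manipulations and the factorizability argument are then comparatively routine, modulo care that all orthogonal joins invoked are genuinely defined.
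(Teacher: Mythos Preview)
Your overall strategy matches the paper's proof closely: the converse direction of the first equivalence and the $\scD$-cancellative $\Rightarrow$ factorizable step are essentially identical to what the paper does. For the forward direction of the first equivalence, the paper proceeds exactly as you anticipate: it first shows $\Int S$ is cancellative (using \cite[Lemma~4-1.6]{WBIS} to realize an equation $\ba+\bb=\ba+\bb'$ in $\Int S$ by an actual equality $a\oplus b=a'\oplus b'$ with $a,a'\in\ba$), then invokes \cite[Corollary~2-7.4]{WBIS} to pass from cancellativity on the generating lower interval to cancellativity on all of $\Typ S$. So the obstacle you flag is real, and the paper resolves it by citing those two results rather than by ad hoc refinement bookkeeping.

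There is, however, a genuine gap in your sketch of factorizable $\Rightarrow$ $\scD$-cancellative. You write that conjugation by~$g$ ``transports the complement of~$a$ to the complement of~$a'$ inside any common idempotent containing both $a\oplus b$ and $a'\oplus b'$.'' But conjugation by an invertible~$g$ need not fix the idempotent $e=a\oplus b=a'\oplus b'$: all you know is $gag^{-1}=a'$, so $geg^{-1}=a'\oplus gbg^{-1}$, which is $\scD$-equivalent to~$e$ but not necessarily equal to it. From $a'\oplus gbg^{-1}\scD a'\oplus b'$ you cannot conclude $gbg^{-1}\scD b'$ without already assuming $\scD$-cancellativity. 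The paper avoids this by first reducing to the case $a\oplus b=a'\oplus b'=1$: set $c=1\sd(a\oplus b)$ and replace $(a,a')$ by $(a\oplus c,a'\oplus c)$, which are still $\scD$-equivalent. Now conjugation by~$g$ does fix~$1$, so $b'=1\sd a'=g(1\sd a)g^{-1}=gbg^{-1}$ and hence $b\scD b'$. You should insert this reduction step.
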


\begin{proof}
Let $\Typ S$ be cancellative and let $a,b,a',b'\in\Idp S$ such that $a\oplus b\scD a'\oplus b'$ and $a\scD a'$.
Since $\typ(a)+\typ(b)=\typ(a\oplus b)=\typ(a'\oplus b')=\typ(a')+\typ(b')$ and $\typ(a)=\typ(a')$, it follows from the cancellativity of~$\Typ S$ that $\typ(b)=\typ(b')$, that is, $b\scD b'$.

Suppose, conversely, that~$S$ is $\scD$-cancellative.
We claim that the type interval~$\Int S$ is cancellative.
Let $\ba,\bb,\bb'\in\Int S$ such that $\ba+\bb=\ba+\bb'$ within~$\Int S$.
By \cite[Lemma 4-1.6]{WBIS}, there are $a,a'\in\ba$, $b\in\bb$, and $b'\in\bb'$ such that $a\oplus b=a'\oplus b'$.
Since~$S$ is~$\scD$-cancellative, it follows that $b\scD b'$, that is, $\bb=\bb'$, which completes the proof of our claim.
Now $\Int S$ is a generating lower interval of the conical refinement monoid~$\Typ S$ (cf. \cite[Corollary 4-1.4]{WBIS}), thus, by our claim together with \cite[Corollary 2-7.4]{WBIS}, $\Typ S$ is cancellative.

Now let~$S$ be unital.
Suppose first that~$S$ is $\scD$-cancellative and let $x\in S$.
Set $u=1\sd\dd(x)$ and $v=1\sd\rr(x)$.
Then $1=\dd(x)\oplus u=\rr(x)\oplus v$ with $\dd(x)\scD\rr(x)$, thus, since~$S$ is $\scD$-cancellative, $u\scD v$, that is, there exists $y\in S$ such that $\dd(y)=u$ and $\rr(y)=v$.
The element $g=x\oplus y$ is invertible and $x\leq g$, thus completing the proof that~$S$ is factorizable.

Suppose, conversely, that~$S$ is factorizable and let $a,b,a',b'\in\Idp S$ such that $a\oplus b=a'\oplus b'$ and $a\scD a'$.
We must prove that $b\scD b'$.
Setting $c=1\sd(a\oplus b)$, we get $(a\oplus c)\oplus b=(a'\oplus c)\oplus b'=1$ with $a\oplus c\scD a'\oplus c$, thus reducing the problem to the case where $a\oplus b=a'\oplus b'=1$.
Let $x\in S$ such that $a=\dd(x)$ and $a'=\rr(x)$.
Since~$S$ is factorizable, there is an invertible element $g\in S$ such that $x\leq g$.
{}From $a'=gag^{-1}$ and the invertibility of~$g$ it follows that $b'=gbg^{-1}$, whence, since~$g$ is invertible, $b\scD b'$.
\end{proof}

Every Boolean inverse semigroup~$S$ can be endowed with the \emph{skew difference}~$\sdf$ and the \emph{skew addition}~$\spl$, respectively defined by
\begin{align}
x\sdf y&=\pI{\rr(x)\sd\rr(y)}x\pI{\dd(x)\sd\dd(y)}\,,
\label{Eq:Defsdf}\\
x\spl y&=(x\sdf y)\oplus y\,,\label{Eq:Defspl}
\end{align}
for all $x,y\in S$.
We prove in~\cite{WBIS} that the structures $(S,0,{}^{-1},\cdot,\sdf,\spl)$ can then be axiomatized by a finite number of identities, whose models are called \emph{biases}.
We prove in~\cite{WBIS} that for any Boolean inverse semigroups~$S$ and~$T$, a homomorphism $f\colon S\to T$ of semigroups with zero is a bias homomorphism if{f} it is \emph{additive}, that is, $f(x\oplus y)=f(x)\oplus f(y)$ whenever~$x$ and~$y$ are orthogonal elements in~$S$.
(In particular, $f(0)=0$.)
A nonempty subset~$I$ of~$S$ is an \emph{additive ideal} of~$S$ if $IS\cup SI\subseteq I$ and~$I$ is closed under finite orthogonal joins.
In that case, the inclusion map from~$I$ into~$S$ is an additive semigroup embedding, and~$I$ is a sub-bias of~$S$.

The bias congruences of a Boolean inverse semigroup~$S$ are characterized, in~\cite{WBIS}, as those inverse semigroup congruences~$\bgq$ such that for all $x\in S$ and all orthogonal idempotents~$a$ and~$b$ of~$S$, $xa\equiv_{\bgq}a$ and $xb\equiv_{\bgq}b$ implies that $x(a\oplus b)\equiv_{\bgq}a\oplus b$.
(Here and elsewhere, $x\equiv_{\bgq}y$ is an equivalent notation for $(x,y)\in\bgq$.)
We denote by~$\Con S$ the (algebraic) lattice of all bias congruences of any Boolean inverse semigroup~$S$.
For a bias congruence~$\bgq$ of a Boolean inverse semigroup~$S$, we shall usually denote by $\gq\colon S\twoheadrightarrow S/{\bgq}$ the canonical projection.

A \emph{similarity type} (cf.~\cite{MMTa}) is a pair $\gS=(\cF,\nu)$, where~$\cF$ is a set and~$\nu$ is a map from~$\cF$ to the nonnegative integers.
The elements of~$\cF$ should be thought of as \emph{function symbols} and~$\nu(f)$ should be thought of as the \emph{arity} of~$f$.
For example, the similarity type of groups is usually given by $\cF=\set{\cdot,{}^{-1}}$, $\nu(\cdot)=2$, and $\nu({}^{-1})=1$.
The similarity type of biases is given by $\cF=\set{0,{}^{-1},\cdot,\sdf,\spl}$, $\nu(0)=0$, $\nu({}^{-1})=1$, and $\nu(\cdot)=\nu(\sdf)=\nu(\spl)=2$.

In general, formal compositions of elements of~$\cF$, taking the arities into account, are called the \emph{terms} of~$\gS$.
An \emph{identity} of~$\gS$ is an expression of the form $\vp=\vq$, where~$\vp$ and~$\vq$ are both terms.
A \emph{$\gS$-algebra} is a nonempty set~$A$, endowed with a map which to each $f\in\cF$, with arity~$n$, associates a map $f^A\colon A^n\to A$ (just an element of~
$A$ if $n=0$).

A \emph{variety of $\gS$-algebras} is the class of all $\gS$-algebras that satisfy a given set of identities of~$\gS$.
Varieties, also known under the name of \emph{equational classes}, are defined and studied in any textbook of universal algebra such as \cite{BurSan,GrUA,MMTa}.
A standard reference for varieties of groups is Neumann's monograph~\cite{Neum1967}.
Every variety~$\cV$, on a set~$X$ of variables, is determined by the set of all identities, with set of variables~$X$, satisfied by~$\cV$.
This set of identities is, in turn, a \emph{fully invariant congruence} of the algebra of all terms on~$\gS$.
This correspondence gives an order-reversing bijection between varieties and fully invariant congruences of the term algebra (cf. \cite[Corollary~II.14.10]{BurSan}), and thus it enables us to dispose conveniently of the apparent foundational problem raised by varieties being proper classes.
In particular, the lattice of all varieties of $\gS$-algebras can be defined, and it has cardinality at most $2^{\aleph_0+\card\cF}$.
Moreover, the fully invariant congruences of an algebra~$A$ form a complete sublattice of the congruences of~$A$ (cf. \cite[Exercise~II.14.1]{BurSan}), thus the lattice of all subvarieties of a variety~$\cV$ satisfies the dual of every lattice identity satisfied by the congruence lattices of all members of~$\cV$.

Now the variety of all groups, and the variety of all biases, are both congruence-permutable (see \cite[Section~II.5]{BurSan} and \cite[Section~3-4]{WBIS}, respectively).
Since the congruence lattice of every congruence-permutable algebra satisfies the \emph{modular identity}, and in fact the even stronger \emph{Arguesian identity} (cf.~\cite{Jons53} and \cite[Theorem~410]{LTF}), and since the Arguesian identity is self-dual (cf. \cite{Jons1972}), the lattice~$\bLg$ of all varieties of groups and the lattice~$\bLb$ of all varieties of biases are both Arguesian.
Stronger congruence identities, following from congruence-permutability, were discovered by Mark Haiman in~\cite{Haim1985}.
For more on identities satisfied by normal subgroup lattices of groups or congruence lattices in algebras from congruence-permutable varieties, we refer the reader to \cite{BuOW1994,Freese1995}.

On the cardinality side, it is known since Ol$'$\v{s}anski\u{\i} that there are continuum many varieties of groups~\cite{Olsh1970}.

We denote by $\Var(\cC)$ the variety of groups generated by a class~$\cC$ of groups, and set $\Var(G)\eqdef\Var(\set{G})$ for any group~$G$.

For a poset~$P$, we denote by $P\sqcup\set{\infty}$ the poset obtained by adding to~$P$ a new top element.

We set $\ZZ^+=\set{0,1,2,3,\dots}$ and $\NN=\set{1,2,3,\dots}$.

\section{Free biases are residually finite}
\label{S:FreeBias}

In this section we prove that the satisfaction of any equation, in the universal bias of an inverse semigroup~$S$, can be reduced to a positive quantifier-free formula over~$S$ in the similarity type of inverse semigroups (Proposition~\ref{P:BIS2IS}).
We deduce from this that every free bias is residually finite, so, in particular, the word problem for finite biases is decidable (Theorem~\ref{T:BISdecid}).

The proof of the following lemma is an elementary calculation and we omit it.

\begin{lemma}\label{Eq:A-BleqC-D}
Let~$a$, $b$, $c$, $d$ be elements in a Boolean ring~$B$.
Then $(a\sd b)\wedge(c\sd d)=(a\wedge c)\sd(b\vee d)$.
Furthermore, $a\sd b\leq c\sd d$ if{f} $a\leq b\vee c$ and $a\wedge d\leq b$.
\end{lemma}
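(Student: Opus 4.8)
The plan is to argue entirely inside the generalized Boolean algebra underlying $B$, using only the order-theoretic description of the relative complement and never a unit of the ring; this keeps the argument valid whether or not $B$ is assumed unital. Concretely, I would first record that for all $x,y,z\in B$ one has $z\leq x\sd y$ if and only if $z\leq x$ and $z\wedge y=0$ (immediate from $(x\sd y)\wedge y=0$, $(x\sd y)\vee(x\wedge y)=x$, and distributivity), together with its three standard corollaries: $x\sd y=0\Leftrightarrow x\leq y$; $(x\sd y)\sd z=x\sd(y\vee z)$; and $d\wedge(a\sd b)=(d\wedge a)\sd b$. In the unital case these are nothing but $x\sd y=x\wedge\neg y$ combined with De Morgan and distributivity.

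The first identity I would prove by comparing lower bounds. For an arbitrary $z\in B$, the relation $z\leq(a\sd b)\wedge(c\sd d)$ amounts, by the description above, to the four conditions $z\leq a$, $z\leq c$, $z\wedge b=0$, $z\wedge d=0$; and since $z\wedge(b\vee d)=(z\wedge b)\vee(z\wedge d)$ by distributivity, these say exactly that $z\leq a\wedge c$ and $z\wedge(b\vee d)=0$, i.e.\ that $z\leq(a\wedge c)\sd(b\vee d)$. Thus $(a\sd b)\wedge(c\sd d)$ and $(a\wedge c)\sd(b\vee d)$ have the same lower bounds, and hence coincide.

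For the second statement I would apply the same description of $\sd$ to the pair $(a\sd b,\,c\sd d)$: the inequality $a\sd b\leq c\sd d$ holds if and only if $a\sd b\leq c$ and $(a\sd b)\wedge d=0$. The first clause, rewritten via $(a\sd b)\sd c=a\sd(b\vee c)$ and $x\sd y=0\Leftrightarrow x\leq y$, becomes $a\leq b\vee c$; the second clause, rewritten via $(a\sd b)\wedge d=(a\wedge d)\sd b$, becomes $a\wedge d\leq b$. Together these give the stated criterion. (One could instead derive the second part from the first by turning $a\sd b\leq c\sd d$ into $(a\sd b)\wedge(c\sd d)=a\sd b$ and unwinding $(a\wedge c)\sd(b\vee d)=a\sd b$, but the direct computation is shorter.)

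I do not anticipate any genuine difficulty: both assertions reduce to a one-line manipulation of relative complements. The only thing to watch is to remain within the operations $\wedge$, $\vee$, $\sd$ and not to slip in a top element or an absolute complement, which is precisely why I would route the argument through the order characterization of $\sd$ rather than through $\neg$.
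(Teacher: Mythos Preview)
Your argument is correct. The paper itself omits the proof entirely, calling it ``an elementary calculation,'' so there is no approach to compare against; your order-theoretic route through the characterization $z\leq x\sd y\iff z\leq x$ and $z\wedge y=0$ is a clean and appropriate way to fill in the details, and it correctly avoids assuming a unit.
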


Since every inverse semigroup~$S$ has a semigroup embedding into a bias~$T$ such that $0_T\notin S$ (use the Vagner-Preston Theorem), the canonical map from~$S$ to its \emph{universal bias}~$\Ub{S}$ is a semigroup embedding with $0\notin S$, and we shall thus assume that this embedding is an inclusion map.
Therefore, $S$ is an inverse subsemigroup of~$\Ub{S}$, generating~$\Ub{S}$ as a bias, and every semigroup homomorphism from~$S$ to a bias~$T$ extends to a (unique) bias homomorphism from~$\Ub{S}$ to~$T$.

\begin{lemma}\label{L:CanFormBias}
Let~$S$ be an inverse subsemigroup of a Boolean inverse semigroup~$T$.
Suppose that~$T$ is generated by~$S$ as a bias.
Denote by~$B$ the
Boolean subring of~$\Idp{T}$ generated by~$\Idp{S}$.
Then $B=\Idp{T}$ and every element $\bx\in T$ can be written in the form
 \begin{equation}\label{Eq:CanFormBias}
 \bx=\bigoplus_{i=1}^nx_ia_i\,,\text{ where }n\in\ZZ^+\,,\ 
 a_1,\dots,a_n\in B\,,\text{ and }x_1,\dots,x_n\in S\,.
 \end{equation}
\end{lemma}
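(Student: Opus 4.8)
The plan is to first establish that $B=\Idp T$, and then use this to show that the set $T'$ of all elements of the form $\bigoplus_{i=1}^n x_ia_i$ (with $n\in\ZZ^+$, the $a_i\in B$, and the $x_i\in S$) is a sub-bias of $T$ containing $S$; since $S$ generates $T$ as a bias, this forces $T'=T$, which is the assertion. For the first point, I would argue that the set of idempotents expressible in the required form — equivalently, just $B$ itself, once we know $B$ is closed under the relevant operations — is already a generalized Boolean subalgebra of $\Idp T$ containing $\Idp S$; the subtlety is that $B$ must be shown to be closed not only under the Boolean ring operations but under conjugation $a\mapsto xax^{-1}$ for $x\in S$, so that $\Idp T$, being generated by $\Idp S$ as a bias (hence under multiplication, inversion, $\oplus$, and the derived skew operations, all of which on idempotents reduce to Boolean operations together with such conjugations), coincides with $B$. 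In fact, since for $x\in S$ and $a\in\Idp S$ one has $xax^{-1}\in\Idp S\subseteq B$, and since conjugation distributes over the Boolean operations, an easy induction on the construction of a generic element of $\Idp T$ from $\Idp S$ shows every such element lies in $B$.

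Next I would verify that $T'$ is closed under the bias operations. Closure under $\oplus$ of orthogonal elements is immediate by concatenating the two expressions. For the product, $\bigl(\bigoplus_i x_ia_i\bigr)\bigl(\bigoplus_j y_jb_j\bigr)=\bigoplus_{i,j}x_ia_iy_jb_j$, and $a_iy_j=y_j(y_j^{-1}a_iy_j)$ rewrites $x_ia_iy_jb_j$ as $(x_iy_j)\cdot c_{i,j}$ with $c_{i,j}=(y_j^{-1}a_iy_j)b_j\in B$ and $x_iy_j\in S$; one must then check the resulting family is orthogonal so that the join is legitimate, which follows from the orthogonality of the $x_ia_i$ (on the left) and of the $y_jb_j$ (on the right) together with $\dd(x_ia_iy_jb_j)\leq \dd(y_jb_j)$ and $\rr(x_ia_iy_jb_j)\leq\rr(x_ia_i)$. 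Inversion is straightforward: $\bigl(\bigoplus_i x_ia_i\bigr)^{-1}=\bigoplus_i a_ix_i^{-1}=\bigoplus_i x_i^{-1}(x_ia_ix_i^{-1})$, again of the required shape since $x_ia_ix_i^{-1}\in\Idp T=B$. Finally, $\sdf$ and $\spl$ are defined from multiplication, inversion, $\oplus$ and $\sd$ via~\eqref{Eq:Defsdf}--\eqref{Eq:Defspl}, and $\sd$ between elements of $T'$ can be handled because the relevant meets live in the Boolean ring $B$ (here Lemma~\ref{Eq:A-BleqC-D} is the natural tool for computing differences and meets of the idempotent "coefficients"); so closure under $\sdf,\spl$ reduces to the closures already established. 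Since $T'$ contains each $x=x\cdot 1$ — wait, $T$ need not be unital, but $x=x\cdot\dd(x)$ with $\dd(x)\in\Idp S\subseteq B$, so $S\subseteq T'$ — and $0=$ (empty join) $\in T'$, we get that $T'$ is a sub-bias containing $S$, hence $T'=T$.

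The main obstacle I expect is the bookkeeping around orthogonality: at each step (product, inversion, and especially the $\sd$ used inside $\sdf$) one must confirm that the indexed family being joined really is pairwise orthogonal, so that the displayed $\bigoplus$ is a bona fide orthogonal join in $T$ rather than a formal symbol. This is where the domain/range inequalities $\dd(xa)\leq\dd(y b)$-type estimates get used repeatedly, and it is the part most prone to hidden gaps; everything else is routine rewriting. A secondary point worth care is that the representation~\eqref{Eq:CanFormBias} is not claimed to be unique or canonical, so I would not attempt any normal-form argument — mere existence of the expression, proved by the sub-bias closure argument above, is all that is needed.
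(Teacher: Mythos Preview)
Your overall strategy---show that the set $T'$ of all finite orthogonal joins $\bigoplus_i x_ia_i$ (with $x_i\in S$, $a_i\in B$) is a sub-bias of~$T$ containing~$S$, hence equals~$T$---is exactly the paper's. The point where your plan wobbles is the attempt to establish $B=\Idp T$ \emph{first}: the assertion that ``$\Idp T$ is generated by $\Idp S$ as a bias'' is not something you have available. Idempotents of~$T$ arise as $\dd(x)$ for arbitrary $x\in T$, and you cannot describe those without already knowing the structure of~$T$, which is precisely the content of the lemma. The paper obtains $B=\Idp T$ only \emph{after} proving $T'=T$: one checks directly that $\Idp(T')=B$ (each $\dd\bigl(\bigoplus_i x_ia_i\bigr)=\bigoplus_i\dd(x_i)a_i\in B$, and conversely every $a\in B$ is an orthogonal join of terms $a_i c_i$ with $a_i\in\Idp S$ and $c_i\in B$), so once $T'=T$ the equality $B=\Idp T$ drops out. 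Fortunately your later appeals to ``$\Idp T=B$'' in the inversion and $\sdf,\spl$ steps are really just appeals to the conjugation closure $xBx^{-1}\subseteq B$ for $x\in S$ (which you do establish), so reordering the argument repairs the gap.

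One further difference worth noting: rather than verifying closure of~$T'$ under~$\sdf$ and~$\spl$ by hand, the paper invokes \cite[Corollary~3-2.7]{WBIS}, which states that an inverse subsemigroup of a bias, closed under finite orthogonal joins and under differences of idempotents, is already a sub-bias. Since $\Idp(T')=B$ is a Boolean subring of $\Idp T$ and~$T'$ is visibly an inverse subsemigroup closed under~$\oplus$, that criterion applies directly and spares you the orthogonality bookkeeping you correctly flagged as the most delicate part.
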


\begin{proof}
The set of all elements $a\in\Idp{T}$, such that $xax^{-1}\in B$ whenever $x\in S$, contains~$\Idp{S}$ and is closed under finite meets, differences, and orthogonal joins, thus it contains~$B$.
Hence, $xBx^{-1}\subseteq B$ whenever $x\in S$.
Set $\Delta=\setm{xa}{(x,a)\in S\times B}$.
For each $(x,a)\in S\times B$, $ax=axx^{-1}x=xx^{-1}ax=xa'$ where $a'=x^{-1}ax\in B$.
This also proves that~$\Delta$ is closed under the inversion operation $x\mapsto x^{-1}$.
Now for all $x,y\in S$ and all $a,b\in B$, there is $a'\in B$ such that $ay=ya'$, thus
 \[
 (xa)(yb)=xya'b\text{ belongs to }\Delta\,.
 \]
Therefore, $\Delta$ is an inverse subsemigroup of~$T$, and therefore so is the closure~$\Delta^{\oplus}$ of~$\Delta$ under finite orthogonal joins.

Let~$\bx$ be written as in~\eqref{Eq:CanFormBias}.
Then $\dd(\bx)=\bigoplus_{i=1}^n\dd(x_ia_i)=\bigoplus_{i=1}^n\dd(x_i)a_i$ belongs to~$B$.
It follows that $\Idp{\Delta^{\oplus}}\subseteq B$.
On the other hand, for every $a\in B$, there are $n\in\ZZ^+$ and $a_0,\dots,a_{n-1}\in S$ such that $a\leq\bigvee_{i<n}a_i$.
Setting $b_i=a\wedge(a_i\sd\bigvee_{j<i}a_j)$ for all $i<n$, we get
 \[
 a=\bigoplus_{i<n}b_i=\bigoplus_{i<n}a_ib_i\in
 \Delta^{\oplus}\,,
 \]
thus completing the proof that $B\subseteq\Idp{\Delta^{\oplus}}$.
Therefore, $B=\Idp{\Delta^{\oplus}}$.

Since~$\Delta^{\oplus}$ is an inverse subsemigroup of~$T$, closed under finite orthogonal joins and difference of idempotent elements, it is, by \cite[Corollary 3-2.7]{WBIS}, a sub-bias of~$T$.
Since it contains~$S$, it follows that $T=\Delta^{\oplus}$.
Hence, $\Idp{T}=\Idp{\Delta^{\oplus}}=B$.
\end{proof}

\begin{corollary}\label{C:CanFormBias}
Let~$S$ be a finite inverse semigroup.
Then the universal bias~$\Ub{S}$ is finite.
\end{corollary}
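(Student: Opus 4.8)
The plan is to apply Lemma~\ref{L:CanFormBias} with $T\eqdef\Ub{S}$, which is legitimate since, as recalled just before that lemma, $S$ is an inverse subsemigroup of~$\Ub{S}$ generating it as a bias. Writing $B$ for the Boolean subring of~$\Idp{\Ub{S}}$ generated by~$\Idp{S}$, the lemma yields $\Idp{\Ub{S}}=B$ together with a canonical form: every $\bx\in\Ub{S}$ can be written as $\bx=\bigoplus_{i=1}^{n}x_ia_i$ with $n\in\ZZ^+$, all $x_i\in S$, and all $a_i\in B$.

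First I would note that $B$ is finite: $\Idp{S}$ is finite because~$S$ is, and a Boolean ring generated by finitely many, say~$k$, elements is finite, since each of its elements is a sum of products of generators, and there are at most $2^{2^{k}}$ such sums. Hence the set $\Delta\eqdef\setm{xa}{x\in S,\ a\in B}$ is finite, of cardinality at most $\card S\cdot\card B$.

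The only real point is then to bound the number of elements of the form $\bigoplus_{i=1}^{n}x_ia_i$, for which the length~$n$ is a priori uncontrolled. Here I would use that a nonzero element cannot occur twice in an orthogonal join: if $y\perp y$ then $y^{-1}y=0$, so $y=0$. Thus, deleting repeated (hence zero) summands from a canonical form $\bx=\bigoplus_{i=1}^{n}x_ia_i$ exhibits~$\bx$ as $\bigoplus E$ for some subset~$E$ of~$\Delta$. The resulting partial map from the power set of~$\Delta$ onto~$\Ub{S}$ then gives $\card{\Ub{S}}\leq 2^{\card\Delta}\leq 2^{\card S\cdot\card B}<\infty$. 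I do not expect a serious obstacle beyond keeping track of this last point: Lemma~\ref{L:CanFormBias} does not bound~$n$, so the ``no repeated nonzero summand'' observation is exactly what is needed to turn finiteness of~$\Delta$ into finiteness of~$\Ub{S}$.
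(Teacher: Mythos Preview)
Your proof is correct and follows the same route the paper has in mind: the corollary is stated without proof, as an immediate consequence of Lemma~\ref{L:CanFormBias}, and you have simply supplied the details---finiteness of the Boolean ring~$B$ generated by~$\Idp S$, finiteness of $\Delta=\setm{xa}{x\in S,\ a\in B}$, and the observation that pairwise orthogonality forces the nonzero summands in $\bigoplus_{i=1}^{n}x_ia_i$ to be distinct, so that~$\Ub{S}$ is covered by orthogonal joins of subsets of the finite set~$\Delta$. The ``no repeated nonzero summand'' step is exactly the missing bound on~$n$ that the paper leaves to the reader.
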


The following is a slight strengthening of Lemma~\ref{L:CanFormBias}.

\begin{lemma}\label{L:CanFormBias2}
Let~$S$ be an inverse subsemigroup of a Boolean inverse semigroup~$T$.
Suppose that~$T$ is generated by~$S$ as a bias.
Then every element $\bx\in T$ can be written in the form
\begin{equation}\label{Eq:CanFormBias2}
 \begin{aligned}
 &\bx=
 \bigoplus_{i=1}^nx_i\pII{a_i\sd\bigvee_{j=1}^{n_i}b_{i,j}}\,,\\
 &\text{where all }x_i\in S\,,\text{ all }a_i,b_{i,j}\in\Idp{S}\,,
 \text{ all }b_{i,j}\leq a_i\,,\text{ and all }a_i\leq\dd(x_i)\,.
 \end{aligned}
 \end{equation}
\end{lemma}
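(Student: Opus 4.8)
The plan is to bootstrap from Lemma~\ref{L:CanFormBias}, which already writes $\bx=\bigoplus_{i=1}^{n}x_ic_i$ with all $x_i\in S$ and all $c_i\in B$, where $B$ is the Boolean subring of $\Idp T$ generated by $\Idp S$. Since, in any Boolean inverse semigroup, multiplication distributes over orthogonal joins and left multiplication by a fixed element carries orthogonal idempotents to orthogonal idempotents, it is enough to rewrite a single product $xc$ (with $x\in S$, $c\in B$) as an orthogonal join of terms $x\bigl(a\sd\bigvee_{j}b_j\bigr)$ with $a,b_j\in\Idp S$ and $b_j\leq a$; reassembling over $i$ then yields a representation of the shape \eqref{Eq:CanFormBias2} save for the last constraint $a_i\leq\dd(x_i)$, the orthogonality of the global join being inherited from the fact that each new summand sits below the orthogonal summand $x_ic_i$.

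The core step is a disjunctive normal form for $B$: every $c\in B$ is a finite orthogonal join of elements $a\sd\bigvee_{j}b_j$ with $a\in\Idp S$, each $b_j\in\Idp S$, and each $b_j\leq a$ (if $a=0$ the term vanishes and is discarded). Indeed, $c$ belongs to the finite generalized Boolean subalgebra of $\Idp T$ generated by some $e_1,\dots,e_m\in\Idp S$, hence is the orthogonal join of the atoms of that subalgebra lying below it; and each such atom has the form $\bigwedge_{i\in I}e_i\wedge\bigwedge_{j\notin I}(e\sd e_j)$ for some nonempty $I\subseteq\{1,\dots,m\}$, where $e=\bigvee_{k=1}^{m}e_k$ is the top of that subalgebra. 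Using $\bigwedge_{i\in I}e_i\leq e$, this rewrites as $\bigl(\bigwedge_{i\in I}e_i\bigr)\sd\bigvee_{j\notin I}\bigl(\bigwedge_{i\in I}e_i\wedge e_j\bigr)$, which has the claimed form precisely because $\Idp S$ is closed under finite meets. Distributing $x$ over the resulting orthogonal join gives $xc=\bigoplus_{k}x\bigl(a_k\sd\bigvee_{l}b_{k,l}\bigr)$ with all data in $\Idp S$ and $b_{k,l}\leq a_k$.

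It remains to secure $a\leq\dd(x)$ in a term $x\bigl(a\sd\bigvee_j b_j\bigr)$. For this, use $x=x\dd(x)$ to get $x\bigl(a\sd\bigvee_j b_j\bigr)=x\bigl(\dd(x)\wedge(a\sd\bigvee_j b_j)\bigr)$, then apply Lemma~\ref{Eq:A-BleqC-D} inside the Boolean ring $\Idp T$ (reading the second difference as $\dd(x)\sd 0$) to rewrite the bracketed factor as $(\dd(x)\wedge a)\sd\bigvee_j b_j$. Put $a'=\dd(x)\wedge a\in\Idp S$, so $a'\leq\dd(x)$, and replace each $b_j$ by $b_j\wedge a'$; this changes nothing, since $a'\sd v=a'\sd(a'\wedge v)$, and now the new $b_j\leq a'$ all lie in $\Idp S$. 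Reindexing the whole collection over a single index $i$ produces \eqref{Eq:CanFormBias2}.

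The one place demanding genuine care — and the expected main obstacle — is the disjunctive normal form: one must check that the positive part $\bigwedge_{i\in I}e_i$ of each atom, and each $\bigwedge_{i\in I}e_i\wedge e_j$, really lies in $\Idp S$ rather than merely in $B$, which is exactly where closure of $\Idp S$ under products of idempotents is used and is what lets the representation descend from $B$ down to $\Idp S$. The remaining ingredients — distributivity of multiplication over orthogonal joins, preservation of orthogonality and of the natural order under left multiplication, and the two elementary manipulations in $\Idp T$ — are routine.
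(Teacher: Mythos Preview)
Your proof is correct and follows essentially the same route as the paper: reduce via Lemma~\ref{L:CanFormBias} to a single term $xc$ with $c\in B$, write $c$ as an orthogonal join of elements $a\sd\bigvee_j b_j$ with $a,b_j\in\Idp S$ (using that $\Idp S$ is closed under finite meets), then replace each $a$ by $a\dd(x)$ and each $b_j$ by $a\dd(x)b_j$ to enforce $b_j\leq a\leq\dd(x)$. Your treatment is simply more explicit about the disjunctive normal form and about why the final adjustment works.
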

%

\begin{proof}
By virtue of Lemma~\ref{L:CanFormBias}, the Boolean ring $B=\Idp{T}$ is generated by~$\Idp{S}$.
Furthermore, by that lemma, it suffices to prove the existence of the given decomposition in case $\bx=xa$, where $x\in S$ and $a\in B$.
Since~$\Idp{S}$ is closed under finite meets, $a$ is a finite orthogonal join of elements of the form $a_i\sd\bigvee_{j=1}^{n_i}b_{i,j}$.
Replacing each~$a_i$ by~$a_i\dd(x_i)$ and each~$b_{i,j}$ by~$a_i\dd(x_i)b_{i,j}$, we get the desired conclusion.
\end{proof}

\begin{lemma}\label{L:FBH}
Let~$S$ be an inverse semigroup, let~$n\in\ZZ^+$, and let $x,x_1,\dots,x_n\in S$ such that the~$x_i$ are pairwise compatible.
Then $x\leq\bigvee_{i=1}^nx_i$, within~$\Ub{S}$, if{f} $x\leq x_i$ for some~$i$.
\end{lemma}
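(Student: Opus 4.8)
The implication from right to left is trivial, since each $x_i\leq\bigvee_{j=1}^nx_j$; all the content lies in the converse, and the plan is to test the inequality $x\leq\bigvee_{i=1}^nx_i$ against a concrete faithful model of~$S$, via the universal property of~$\Ub{S}$. The model will be the symmetric inverse monoid $\cI(S)$ of all partial injections of the underlying set of~$S$. Recall that $\cI(S)$ is a Boolean inverse monoid --- its idempotents are the partial identities, forming the power-set Boolean algebra of~$S$, and two partial injections are orthogonal precisely when their domains and their ranges are disjoint, in which case their join is the union of their graphs --- hence a bias; its natural order is inclusion of graphs, and the join of a finite pairwise compatible family of partial injections is simply the union of their graphs.

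First I would fix the Vagner--Preston embedding $\rho\colon S\to\cI(S)$, say in the form where $\rho_s$ has domain~$\dd(s)S$ and sends~$t$ to~$st$. The only property of it that I shall use is the bookkeeping identity $\rho_s\pI{\dd(s)}=s\dd(s)=s$; in other words, the pair $\pI{\dd(s),s}$ belongs to the graph of~$\rho_s$. Since $\rho$ is a semigroup homomorphism into a bias, the universal property of~$\Ub{S}$ provides a bias homomorphism $\widehat{\rho}\colon\Ub{S}\to\cI(S)$ extending~$\rho$.

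Now suppose that $x\leq\bigvee_{i=1}^nx_i$ within~$\Ub{S}$ (for $n=0$ there is nothing to prove, since no element of~$S$ lies below~$0$ in~$\Ub{S}$). The join on the right exists, since the~$x_i$ are pairwise compatible in~$S$, hence in~$\Ub{S}$, and finite pairwise compatible families have joins in every Boolean inverse semigroup (cf.~\cite{WBIS}); moreover, being a bias homomorphism, $\widehat{\rho}$ is order-preserving and preserves these joins (cf.~\cite{WBIS}). Applying $\widehat{\rho}$ thus yields $\rho_x\leq\bigvee_{i=1}^n\rho_{x_i}$ in~$\cI(S)$, that is, $\rho_x\subseteq\bigcup_{i=1}^n\rho_{x_i}$ as sets of ordered pairs. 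Since $\pI{\dd(x),x}\in\rho_x$, there is an index~$i$ with $\pI{\dd(x),x}\in\rho_{x_i}$, that is, $x_i\dd(x)=x$; hence $x=x_i\dd(x)\leq x_i$ in~$S$, and therefore in~$\Ub{S}$, as $S$ is an inverse subsemigroup of~$\Ub{S}$.

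The step that is not purely formal is the assertion that $\widehat{\rho}$ respects the join $\bigvee_{i=1}^nx_i$ (and that this join exists in~$\Ub{S}$ at all); both facts come down to the standard observation that in a Boolean inverse semigroup a finite compatible join can be obtained from~$\oplus$ by orthogonalization, and is therefore preserved by every additive (that is, bias) homomorphism. The one genuine idea in the argument is the remark that $\rho_s$ records~$s$ through the single point~$\dd(s)$ of its domain, which is exactly what forces one particular~$x_i$, and not merely the whole family, to dominate~$x$.
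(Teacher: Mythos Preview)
Your proof is correct and follows essentially the same route as the paper's: both use the Vagner--Preston embedding $\rho\colon S\to\fI_S$, push the inequality $x\leq\bigvee_i x_i$ through to the symmetric inverse monoid, and then evaluate at the single point~$\dd(x)$ to force $x_i\dd(x)=x$ for some~$i$. You are somewhat more explicit than the paper about invoking the universal property of~$\Ub{S}$ to obtain~$\widehat{\rho}$ and about why the compatible join is preserved, but the core argument is identical.
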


\begin{proof}
We prove the nontrivial direction.
Suppose that $x\leq\bigvee_{i=1}^nx_i$ within~$\Ub{S}$.
Denote by $\rho\colon(S,\cdot)\to(\fI_{S},\circ)$ the inverse semigroup embedding given by the Vagner-Preston Theorem, which we will call the \emph{Vagner-Preston completion of~$S$}: for every $z\in S$, $\rho_z$ is the bijection from~$\dd(z)S$ onto~$\rr(z)S$ given by
 \[
 \rho_z(t)=zt\quad\text{whenever}\quad t\in\dd(z)S\,.
 \]
{}From our assumption it follows that the partial function $\rho_x$ is extended by the union of all the~$\rho_{x_i}$.
Since $\dd(x)$ belongs to the domain of~$\rho_x$, it also belongs to the domain of~$\rho_{x_i}$ for some~$i$, and then $x=\rho_x(\dd(x))=\rho_{x_i}(\dd(x))=x_i\dd(x)$, that is, $x\leq x_i$.
\end{proof}

\begin{lemma}\label{L:BasicIneqUbS}
Let~$S$ be an inverse semigroup, let $m,n\in\ZZ^+$, and let $x$, $y$, $a$, $a_1$, \dots, $a_m$, $b_1$, \dots, $b_n$ be elements of~$S$, with~$a$, $b$, $a_1$, \dots, $a_m$, $b_1$, \dots, $b_n$ all idempotent.
Then $x\pI{a\sd\bigvee_{i=1}^ma_i}\leq y\pI{b\sd\bigvee_{j=1}^nb_j}$ within~$\Ub{S}$ if{f} the following statements hold:
 \begin{gather}
 \text{there is }i\in[1,m]\text{ such that }
 a\dd(x)\leq a_i\text{ or }
 \pI{xa=y\dd(x)a\text{ and }a\dd(x)\leq b\dd(y)}\,,
 \label{Eq:DisjCond}\\
 \text{for each }j\in[1,n]\text{ there is }i\in[1,m]
 \text{ such that }
 a\dd(x)b_j\leq a_i\,.\label{Eq:AEcond} 
 \end{gather}
 
\end{lemma}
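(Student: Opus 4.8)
The plan is to peel the displayed inequality, which lives in $\Ub S$, apart into a ``domain'' condition, controlled by the Boolean-ring calculus of Lemma~\ref{Eq:A-BleqC-D}, and a ``value'' condition, controlled by the covering principle of Lemma~\ref{L:FBH}; both can then be read off inside $S$, which is exactly what \eqref{Eq:DisjCond} and \eqref{Eq:AEcond} do. Write $e=a\sd\bigvee_{i=1}^{m}a_i$ and $f=b\sd\bigvee_{j=1}^{n}b_j$. By Lemma~\ref{L:CanFormBias} these are idempotents of $\Ub S$ lying in the Boolean subring $B$ of $\Idp{\Ub S}$ generated by $\Idp S$, and a short computation (using $e^{-1}=e$ and commutativity of idempotents) gives $\dd(xe)=\dd(x)\wedge e=a\dd(x)\sd\bigvee_{i}a_i$, and likewise $\dd(yf)=b\dd(y)\sd\bigvee_{j}b_j$, the meets being computed in $B$. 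I would first record the elementary fact that, for elements $u,v$ and an idempotent $h$ of an inverse semigroup, $u\leq vh$ if{f} $u\leq v$ and $\dd(u)\leq\dd(vh)$ (for the nontrivial direction, $vh\dd(u)=v\dd(u)=u$ as soon as $\dd(u)\leq\dd(vh)\leq h$). Applied with $u=xe$, $v=y$, $h=f$, this reduces our inequality to the conjunction of $xe\leq y$ and $\dd(xe)\leq\dd(yf)$.

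Next I would decode $\dd(xe)\leq\dd(yf)$, i.e.\ $a\dd(x)\sd\bigvee_i a_i\leq b\dd(y)\sd\bigvee_j b_j$ in $B$, by Lemma~\ref{Eq:A-BleqC-D}: it is equivalent to $a\dd(x)\leq\bigl(\bigvee_i a_i\bigr)\vee b\dd(y)$ together with $a\dd(x)\wedge\bigl(\bigvee_j b_j\bigr)\leq\bigvee_i a_i$. The elements $a_1,\dots,a_m,b\dd(y)$ are pairwise compatible idempotents of $S$, and $a\dd(x)\in S$, so Lemma~\ref{L:FBH} turns the first inequality into: either $a\dd(x)\leq a_i$ for some $i$, or $a\dd(x)\leq b\dd(y)$. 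Since $a\dd(x)\wedge\bigvee_j b_j=\bigvee_j a\dd(x)b_j$, applying Lemma~\ref{L:FBH} once for each $j$ turns the second inequality into precisely \eqref{Eq:AEcond}. Thus \eqref{Eq:AEcond} is the whole content of the second half of $\dd(xe)\leq\dd(yf)$, and the ``covering'' alternative ``$a\dd(x)\leq a_i$ for some $i$'' is the first disjunct of \eqref{Eq:DisjCond}.

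It remains to fold in the condition $xe\leq y$. If $a\dd(x)\leq a_i$ for some $i$, then $\dd(xe)=a\dd(x)\sd\bigvee_i a_i=0$, so $xe=0$ and both $xe\leq y$ and $\dd(xe)\leq\dd(yf)$ hold automatically, and \eqref{Eq:AEcond} holds too; on this branch the equivalence is immediate. Otherwise $\dd(xe)\neq 0$, the decoding above forces $a\dd(x)\leq b\dd(y)$, and one is left to see that (under this and \eqref{Eq:AEcond}) $xe\leq y$ is equivalent to the single equation $xa=y\dd(x)a$, i.e.\ to $x$ and $y$ agreeing on the idempotent $a\dd(x)$. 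One implication is clear: $xa=y\dd(x)a$ says $xa\leq y$, and $e\leq a$ gives $xe\leq xa\leq y$. For the converse, $xe\leq y$ says $x$ and $y$ agree on $\dd(xe)=a\dd(x)\sd\bigvee_i a_i$, and the task is to propagate that agreement onto all of $a\dd(x)$, using a further application of Lemma~\ref{L:FBH} and the fact that $\dd(xe)\neq 0$.

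I expect this last propagation step to be the main obstacle: turning ``$x$ and $y$ agree on $\dd(xe)$'' into ``$xa=y\dd(x)a$'' is the only place where the precise bookkeeping of the $a_i$'s against $a\dd(x)$ really matters. Everything else is either the general order theory of inverse semigroups, or a routine translation through Lemmas~\ref{Eq:A-BleqC-D} and~\ref{L:FBH}, or the final assembly of the two branches into the conjunction of \eqref{Eq:DisjCond} and \eqref{Eq:AEcond}.
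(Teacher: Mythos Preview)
Your decomposition of the inequality $xe\leq yf$ into the domain condition $\dd(xe)\leq\dd(yf)$ and the value condition $xe\leq y$ is correct, and your treatment of the domain condition via Lemmas~\ref{Eq:A-BleqC-D} and~\ref{L:FBH} is clean and yields exactly~\eqref{Eq:AEcond} together with the disjunction ``$a\dd(x)\leq a_i$ for some~$i$, or $a\dd(x)\leq b\dd(y)$''. The ``if'' direction goes through as you describe.

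The gap is precisely where you locate it, but Lemma~\ref{L:FBH} will not close it. Writing $c=a\dd(x)$, the task is: from $xe\leq y$ in~$\Ub{S}$ and $c\nleq a_i$ for all~$i$, deduce $xc=yc$. Any attempt to cast this as a join-cover---say $xc\leq y\vee\bigvee_i xca_i$, or $c\leq r\vee\bigvee_i a_i$ with $r=(xc)^{-1}(yc)$---founders because the elements on the right-hand side are not pairwise compatible in general (there is no reason for $y^{-1}xca_i$, or $r^{-1}a_i$, to be idempotent), so Lemma~\ref{L:FBH} does not apply. And the implication is genuinely false in an arbitrary bias: in~$\fI_3$ one can have $x(c\sd a_1)=y(c\sd a_1)$ with $c\nleq a_1$ yet $xc\neq yc$. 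So the propagation really uses that we are in~$\Ub{S}$, not just in some bias containing~$S$.

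The paper's argument supplies exactly the missing tool: it pushes the inequality through the Vagner--Preston embedding $\rho\colon S\hookrightarrow\fI_S$ (extended to~$\Ub{S}$) and then \emph{evaluates at the single point~$c$}. Since $c\in cS$ and $c\notin a_iS$ (as $c\nleq a_i$), the point~$c$ lies in the domain of $\bar\rho(xe)$, whence $\rho_x(c)=\rho_y(c)$, that is, $xc=yc$. This point-evaluation is a strictly stronger use of universality than what Lemma~\ref{L:FBH} packages; your approach can be completed, but only by inserting this same Vagner--Preston step at the end, which is essentially how the paper proves the whole ``only if'' direction in one stroke.
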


\begin{proof}
By replacing~$a$ by~$a\dd(x)$ and~$b$ by~$b\dd(y)$, we may assume without loss of generality that $a\leq\dd(x)$ and $b\leq\dd(y)$.

Suppose first that the conditions~\eqref{Eq:DisjCond} and~\eqref{Eq:AEcond} both hold.
Set $u=a\sd\bigvee_{i=1}^ma_i$ and $v=b\sd\bigvee_{j=1}^nb_j$.
We must prove that $xu\leq yv$ within~$\Ub{S}$.

If $a\leq a_i$ for some~$i$, then $u=0$, thus $xu=0$ and we are done.
Suppose now that $a\nleq a_i$ for all~$i$.
It follows from~\eqref{Eq:DisjCond} that $a\leq b$ and $xa=ya$.
An elementary application of Lemma~\ref{Eq:A-BleqC-D}, together with~\eqref{Eq:DisjCond} and~\eqref{Eq:AEcond}, then yields $u\leq v$.
Since $xa=ya$ and $u\leq a$, it thus follows that $xu=yu\leq yv$.

Suppose, conversely, that $xu\leq yv$ within~$\Ub{S}$.
Denote again by $\rho\colon(S,\cdot)\to(\fI_{S},\circ)$ the Vagner-Preston completion of~$S$.
By projecting the equation $xu\leq yv$ onto the symmetric inverse semigroup~$\fI_{S}$, \emph{via}~$\rho$, we obtain
 \begin{equation}\label{Eq:VPeqn2}
 \rho_x\circ\pII{\id_{aS}\sd\bigcup_{i=1}^m\id_{a_iS}}
 \subseteq
 \rho_y\circ\pII{\id_{bS}\sd\bigcup_{j=1}^n\id_{b_jS}}\,,
 \end{equation}
where the containment symbol, between partial functions, means extension and the union symbol, applied to partial functions, means the least common extension.
Since $a\leq\dd(x)$ and $b\leq\dd(y)$, the left hand side and the right hand side of~\eqref{Eq:VPeqn2} have respective domains
 \[
 U=aS\setminus\bigcup_{i=1}^ma_iS\text{ and }
 V=bS\setminus\bigcup_{j=1}^nb_jS\,,
 \]
and it follows from~\eqref{Eq:VPeqn2} that $U\subseteq V$.
By applying Lemma~\ref{Eq:A-BleqC-D}, within the powerset lattice of~$S$, the latter containment implies that
 \begin{align}
 aS&\subseteq\bigcup_{i=1}^ma_iS\cup bS\,,
 \label{Eq:aaib}\\
 aS\cap\bigcup_{j=1}^nb_jS&\subseteq
 \bigcup_{i=1}^ma_iS\,.\label{Eq:abjai} 
 \end{align}
Observing that~$a$ belongs to the left hand side of~\eqref{Eq:aaib}, we obtain that
 \begin{gather}
 a\leq a_i\text{ for some }i\in[1,m]\text{ or }a\leq b\,,
 \label{Eq:aaib2}\\
 \text{for each }j\text{ there exists }i\text{ such that }
 ab_j\leq a_i\,.\label{Eq:abjai2}
 \end{gather}
Furthermore, suppose that $a\nleq a_i$ for all~$i$.
Then~$a$ belongs to~$U$, thus, by~\eqref{Eq:VPeqn2}, $\rho_x(a)=\rho_y(a)$, that is, $xa=ya$. 
\end{proof}

\begin{proposition}\label{P:BIS2IS}
Let~$\vp(\vx_1,\dots,\vx_n)$ and~$\vq(\vx_1,\dots,\vx_n)$ be terms in the similarity type of biases.
Then there is a positive quantifier-free formula $\vr(\vx_1,\dots,\vx_n)$, in the similarity type of inverse semigroups, such that for every inverse semigroup~$S$ and all elements $x_1,\dots,x_n\in S$,
$\Ub{S}$ satisfies $\vp(x_1,\dots,x_n)=\vq(x_1,\dots,x_n)$ if{f}~$S$ satisfies $\vr(x_1,\dots,x_n)$.
\end{proposition}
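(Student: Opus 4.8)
The plan is to reduce everything to the canonical form provided by Lemma~\ref{L:CanFormBias2}. First I would observe that the equation $\vp=\vq$ is equivalent, over biases, to the conjunction of the two inequalities $\vp\leq\vq$ and $\vq\leq\vp$ (using that $x\leq y$ is a bias-definable relation, e.g.\ via $x\spl y=y$, or via $x=y\dd(x)$ together with additivity); so it suffices to handle a single inequality $\vp(\vx_1,\dots,\vx_n)\leq\vq(\vx_1,\dots,\vx_n)$. The key point is that, given specific elements $x_1,\dots,x_n$ of an inverse semigroup~$S$, the values $\vp(x_1,\dots,x_n)$ and $\vq(x_1,\dots,x_n)$ are elements of the bias~$\Ub S$, which is generated by~$S$; hence by Lemma~\ref{L:CanFormBias2} each of them can be written as a finite orthogonal join $\bigoplus_{i}x'_i\pI{a_i\sd\bigvee_j b_{i,j}}$ with $x'_i,a_i,b_{i,j}$ all obtained from $x_1,\dots,x_n$ by semigroup terms (products and inverses), the $a_i,b_{i,j}$ idempotent, $b_{i,j}\leq a_i\leq\dd(x'_i)$. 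The bookkeeping — the number~$n$ of summands, the number~$n_i$ of inner terms, and the explicit semigroup terms witnessing $x'_i,a_i,b_{i,j}$ — depends only on the bias terms $\vp$ and $\vq$, not on~$S$ or the chosen elements: this is because the construction in Lemmas~\ref{L:CanFormBias} and~\ref{L:CanFormBias2} proceeds by structural induction on the term, turning each application of $\cdot$, ${}^{-1}$, $\sdf$, $\spl$ into a uniform recipe for rewriting orthogonal-join normal forms. So one first fixes, by that induction, finite families of semigroup terms producing the normal forms of~$\vp$ and of~$\vq$.

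Next I would handle the inequality $\bigoplus_{i=1}^p\bs_i\pI{\ba_i\sd\bigvee_{j}\bb_{i,j}}\leq\bigoplus_{k=1}^q\bt_k\pI{\bc_k\sd\bigvee_{l}\bd_{k,l}}$ between two such normal forms. Since an orthogonal join $\bigoplus_i u_i$ is below an element~$w$ iff each $u_i\leq w$, it suffices to treat the case of a single summand on the left. And $u\leq\bigoplus_k w_k$, where the $w_k$ are pairwise orthogonal hence pairwise compatible, should reduce — by Lemma~\ref{L:FBH} applied after cutting~$u$ down to the relevant "supports" — to: $u$ is an orthogonal join of pieces, each of which lies below a single~$w_k$. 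Concretely, I expect to intersect the $\scD$-idempotent data: replace~$u=\bs\pI{\ba\sd\bigvee_j\bb_j}$ and each $w_k=\bt_k\pI{\bc_k\sd\bigvee_l\bd_{k,l}}$ appropriately so that Lemma~\ref{L:BasicIneqUbS} applies termwise. That lemma is exactly the atomic case: it translates $x\pI{a\sd\bigvee a_i}\leq y\pI{b\sd\bigvee b_j}$ in~$\Ub S$ into the conjunction of~\eqref{Eq:DisjCond} and~\eqref{Eq:AEcond}, which are positive quantifier-free conditions over~$S$ (finite disjunctions of conjunctions of equalities $st=s't'$ and order relations $e\leq e'$ between semigroup terms, the latter being the equality $ee'=e$). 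Conjoining and disjoining finitely many such conditions, over all summands on the left and all choices of~$k$, yields the desired formula~$\vr$.

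The main obstacle I anticipate is the reduction of a general inequality between two \emph{joins} to the atomic inequalities covered by Lemma~\ref{L:BasicIneqUbS}: one must verify that when the left-hand summand $\bs\pI{\ba\sd\bigvee_j\bb_j}$ is compared with $\bigoplus_k w_k$, the comparison can indeed be decomposed, \emph{via} a partition of the idempotent $\ba\sd\bigvee_j\bb_j$ into finitely many pieces each absorbed by a single~$w_k$, and that this partition is again given by semigroup terms in the original variables — there is no a priori bound issue because the number of $w_k$ is fixed, but one has to be careful that forming these pieces (meets and relative complements of the idempotent parts $\dd(\bt_k)$, $\bc_k$, $\bd_{k,l}$, etc.) stays within the semigroup-term language, which it does since $\Idp{\Ub S}$ is the Boolean ring generated by $\Idp S$ (Lemma~\ref{L:CanFormBias}) and Boolean operations on such idempotents are expressible by semigroup terms as in the proof of Lemma~\ref{L:CanFormBias2}. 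Once that decomposition is in place, the conjunction/disjunction structure of~$\vr$ is routine, and positivity (no negations, no quantifiers) is manifest because every atomic condition produced by Lemma~\ref{L:BasicIneqUbS} is an equality or an order relation between semigroup terms, combined only by $\wedge$ and $\vee$.
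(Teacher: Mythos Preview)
Your outline matches the paper's: reduce $\vp=\vq$ to two inequalities, put both sides in the canonical form of Lemma~\ref{L:CanFormBias2} (the paper does this once and for all in~$\Fb{\Sigma}$ rather than by structural induction, but the effect is the same), and treat one left summand at a time.

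The genuine gap is in your handling of the remaining step, one summand $u=\bs\pI{\ba\sd\bigvee_j\bb_j}$ against a join $\bigoplus_k w_k$. This splits into two independent conditions: the \emph{agreement} conditions $u\cdot\dd(w_k)\leq w_k$ for each~$k$, and the \emph{domain} condition $\dd(u)\leq\bigvee_k\dd(w_k)$. Your partition idea, together with Lemma~\ref{Eq:A-BleqC-D} and Lemma~\ref{L:BasicIneqUbS}, handles the agreement conditions. But it does not touch the domain condition, which is a Boolean inequality
\[
\ba\sd\bigvee_j\bb_j\ \leq\ \bigvee_k\pI{\bc_k\sd\bigvee_l\bd_{k,l}}
\]
in the Boolean ring~$\Idp\Ub{S}$ generated by~$\Idp S$. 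This is \emph{not} covered by Lemma~\ref{L:BasicIneqUbS}, and Lemma~\ref{L:FBH} cannot be applied to it directly: that lemma concerns elements of~$S$, whereas both sides here are Boolean combinations lying in~$\Ub{S}\setminus S$. The paper resolves this by invoking \cite[Lemma~5-2.13]{WBIS}, a purely Boolean fact expressing such an inequality as a finite conjunction of statements $\bigwedge_{k\in X}c_k\leq\bigvee_{k\notin X}c_k$ with all~$c_k$ among the generators (hence in~$\Idp S$); each of these, after taking the product on the left, becomes $c\leq\bigvee_k c_k$ with $c,c_k\in\Idp S$, and \emph{only then} does Lemma~\ref{L:FBH} apply to yield the disjunction $\bigvee_k(c\leq c_k)$.

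Relatedly, your assertion that ``Boolean operations on such idempotents are expressible by semigroup terms'' is not correct as stated: meets are products, but the relative complement~$\sd$ and joins are bias operations, not inverse-semigroup operations. The canonical form~$a\sd\bigvee_j b_j$ has semigroup-term \emph{parameters}~$a,b_j$, but the expression itself lives in~$\Ub{S}$; the whole point of the translation is to eliminate~$\sd$ and~$\bigvee$ from the final formula~$\vr$, and for the domain condition that elimination is exactly the missing step.
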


\begin{proof}
Set $\vec{\vx}=(\vx_1,\dots,\vx_n)$.
Expressing the equation $\vp(\vec{\vx})=\vq(\vec{\vx})$ as the conjunction of the two inequalities
$\vp(\vec{\vx})\leq\vq(\vec{\vx})$ and $\vq(\vec{\vx})\leq\vp(\vec{\vx})$, we see that it suffices to establish the conclusion for the inequality $\vp(\vec{\vx})\leq\vq(\vec{\vx})$.
 
Set $\gS=\set{\vx_1,\dots,\vx_n}$.
Due to the Vagner-Preston Theorem, every inverse semigroup embeds into a bias, thus the canonical map from~$\Fi{\gS}$ into~$\Fb{\gS}$ is one-to-one.
Applying Lemma~\ref{L:CanFormBias2} to the inclusion $\Fi{\gS}\hookrightarrow\Fb{\gS}$, we obtain, for every bias term $\vu(\vec{\vx})$, nonnegative integers~$m_{\vu}$ and~$n_{\vu,i}$ together with elements $\vs_i^{\vu}(\vec{\vx})$ in~$\Fi{\gS}$, for $1\leq i\leq m_{\vu}$, and idempotent elements $\va_i^{\vu}(\vec{\vx})$ and~$\vb_{i,j}^{\vu}(\vec{\vx})$ in~$\Fi{\gS}$, for $1\leq i\leq m_{\vu}$ and $1\leq j\leq n_{\vu,i}$, such that the relations
 \begin{align}
 \vb_{i,j}^{\vu}(\vec{\vx})&\leq\va_i^{\vu}(\vec{\vx})\,,
 \label{Eq:bijleqai}\\
 \va_i^{\vu}(\vec{\vx})&\leq\dd(\vs_i^{\vu}(\vec{\vx}))\,,
 \label{Eq:aiuleqsi}\\
 \vu(\vec{\vx})&
 =\bigoplus_{i=1}^{m_{\vu}}\vs_i^{\vu}(\vec{\vx})
 \pII{\va_i^{\vu}(\vec{\vx})\sd
 \bigvee_{j=1}^{n_{\vu,i}}\vb_{i,j}^{\vu}(\vec{\vx})}
 \label{Eq:Decompvu}
 \end{align}
all hold in~$\Fb{\gS}$, thus in every bias.

In particular, for every bias~$S$ and every finite sequence $\vec{x}=(x_1,\dots,x_n)\in S^n$, the inequality $\vp(\vec{x})\leq\vq(\vec{x})$ is equivalent to the conjunction of all inequalities
 \begin{equation}\label{Eq:SipleqJJsiq}
 \vs_i^{\vp}(\vec{x})
 \pII{\va_i^{\vp}(\vec{x})\sd
 \bigvee_{j=1}^{n_{\vp,i}}\vb_{i,j}^{\vp}(\vec{x})}\leq
 \bigvee_{k=1}^{m_{\vq}}\vs_{k}^{\vq}(\vec{x})
 \pII{\va_{k}^{\vq}(\vec{x})\sd
 \bigvee_{l=1}^{n_{\vq,k}}\vb_{k,l}^{\vq}(\vec{x})}\,,
 \end{equation}
for $1\leq i\leq m_{\vp}$.
By virtue of~\eqref{Eq:bijleqai} and~\eqref{Eq:aiuleqsi}, each inequality~\eqref{Eq:SipleqJJsiq} is, in turn, equivalent to the conjunction of all the inequalities
 \begin{gather}
 \va_i^{\vp}(\vec{x})\sd
 \bigvee_{j=1}^{n_{\vp,i}}\vb_{i,j}^{\vp}(\vec{x})\leq
 \bigvee_{k=1}^{m_{\vq}}\pII{\va_{k}^{\vq}(\vec{x})\sd
 \bigvee_{l=1}^{n_{\vq,k}}\vb_{k,l}^{\vq}(\vec{x})}\,,
 \label{Eq:aibijleqakbkj}\\
 \vs_i^{\vp}(\vec{x})
 \pII{\va_i^{\vp}(\vec{x})\sd
 \bigvee_{j=1}^{n_{\vp,i}}\vb_{i,j}^{\vp}(\vec{x})}
 \pII{\va_{i'}^{\vq}(\vec{x})\sd
 \bigvee_{j=1}^{n_{\vq,i'}}\vb_{i',j}^{\vq}(\vec{x})}\leq
 \vs_{i'}^{\vq}(\vec{x})\,,
 \label{Eq:aii'jj'leq}
 \end{gather}
where $1\leq i\leq m_{\vp}$ and $1\leq i'\leq m_{\vq}$.
By \cite[Lemma~5-2.13]{WBIS}, the inequality~\eqref{Eq:aibijleqakbkj} can be expressed by a conjunction of formulas of the form $\bigwedge_{k\in X}c_k\leq\bigvee_{k\notin X}c_k$, where the~$c_k$ are parameters, in~$\Idp S$, among the~$\va_i^{\vp}(\vec{x})$, $\vb_{i,j}^{\vp}(\vec{x})$, $\va_{i'}^{\vq}(\vec{x})$, $\vb_{i',j'}^{\vq}(\vec{x})$.
Since~$\Idp S$ is closed under finite meets (i.e., products), this reduces to a conjunction of formulas of the form $c\leq\bigvee_kc_k$, where~$c$ and the~$c_k$ are idempotent elements in~$\Idp S$.
By Lemma~\ref{L:FBH}, each such formula is equivalent to the disjunction of the formulas $c\leq c_k$.
Hence, we can express the inequality~\eqref{Eq:aibijleqakbkj} by a positive quantifier-free formula with parameters from~$\Idp{\Fi{\gS}}$.
By using Lemmas~\ref{Eq:A-BleqC-D} and~\ref{L:BasicIneqUbS}, so can the inequality~\eqref{Eq:aii'jj'leq}.
\end{proof}

We are now reaching the main result of this section.

\begin{theorem}\label{T:BISdecid}
Every free bias is residually finite.
In particular, the variety of all biases is generated by all finite symmetric biases~$\fI_N$, and the word problem for free biases is decidable.
\end{theorem}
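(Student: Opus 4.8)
The plan is to deduce Theorem~\ref{T:BISdecid} from Proposition~\ref{P:BIS2IS} together with Corollary~\ref{C:CanFormBias} and the residual finiteness of free inverse semigroups. First I would recall that a free bias is nothing but $\Fb{\gS}=\Ub{\Fi{\gS}}$, the universal bias of a free inverse semigroup~$\Fi{\gS}$ on a set~$\gS$ of generators. To show this is residually finite, I would take two distinct elements $\bu(\vec{\vx}),\bv(\vec{\vx})$ of $\Fb{\gS}$ (which I may assume are given by bias terms over finitely many variables $\vec{\vx}=(\vx_1,\dots,\vx_n)$, since every element of the free bias involves only finitely many generators) and produce a finite bias together with a bias homomorphism separating them. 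By Proposition~\ref{P:BIS2IS}, the failure of the equation $\bu(\vec{\vx})=\bv(\vec{\vx})$ in $\Ub{S}$, for an inverse semigroup~$S$ and a tuple $\vec{x}\in S^n$, is governed by a positive quantifier-free formula $\vr(\vec{\vx})$ in the language of inverse semigroups: $\Ub{S}\models\bu(\vec{x})=\bv(\vec{x})$ iff $S\models\vr(\vec{x})$. Since $\bu\neq\bv$ in $\Fb{\gS}=\Ub{\Fi{\gS}}$, the free inverse semigroup $\Fi{\gS}$ fails $\vr$ at its canonical generators.

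The next step uses the fact that free inverse semigroups are residually finite (Munn's theorem, or Scheiblich's solution of the word problem for free inverse semigroups). A positive quantifier-free formula is preserved under homomorphisms but, crucially, its negation is ``witnessed'' by a finite amount of data; so the failure of $\vr$ at a tuple in $\Fi{\gS}$ reflects, via residual finiteness, into a finite inverse semigroup quotient. Concretely, $\vr$ is a finite Boolean combination (in fact a conjunction of disjunctions, from the proof of Proposition~\ref{P:BIS2IS}) of equalities and $\leq$-relations between inverse-semigroup terms; the failure of $\vr$ is the assertion that at least one conjunct fails, i.e.\ that all disjuncts of some clause fail, and each failing disjunct is a disequation $s(\vec{x})\neq t(\vec{x})$ or a failing inequality $s(\vec{x})\nleq t(\vec{x})$, equivalently $s(\vec{x})\neq s(\vec{x})\dd(t(\vec{x}))$. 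Thus finitely many disequations hold in $\Fi{\gS}$, and since $\Fi{\gS}$ is residually finite there is a homomorphism $\pi\colon\Fi{\gS}\to F$ onto a finite inverse semigroup~$F$ separating all of those finitely many pairs, whence $F$ still fails $\vr$ at the images of the generators. (One must check the elementary point that a positive quantifier-free formula which mentions only equalities and natural-order relations fails at a tuple iff some explicitly chosen finite set of disequations holds there; this is routine from the structure of $\vr$.)

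Now I would close the loop: by Corollary~\ref{C:CanFormBias}, the universal bias $\Ub{F}$ of the finite inverse semigroup~$F$ is again finite. The homomorphism $\pi\colon\Fi{\gS}\to F$, composed with the canonical inclusion $F\hookrightarrow\Ub{F}$, is a semigroup homomorphism from~$\Fi{\gS}$ into the bias~$\Ub{F}$, and hence extends uniquely to a bias homomorphism $\widehat{\pi}\colon\Fb{\gS}=\Ub{\Fi{\gS}}\to\Ub{F}$. Applying Proposition~\ref{P:BIS2IS} once more, but now to the inverse semigroup~$F$: since $F\not\models\vr$ at $\pi(\vec{x})$, we get $\Ub{F}\not\models\bu(\widehat{\pi}(\vec{x}))=\bv(\widehat{\pi}(\vec{x}))$, i.e.\ $\widehat{\pi}$ separates $\bu(\vec{\vx})$ from $\bv(\vec{\vx})$ inside the finite bias $\Ub{F}$. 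Since every two distinct elements of $\Fb{\gS}$ are separated by a homomorphism into a finite bias, $\Fb{\gS}$ is residually finite. For the decidability of the word problem, I would note that residual finiteness of a finitely presented (indeed free) algebra in a finitely axiomatizable variety gives a decision procedure by the standard dovetailing argument (enumerate consequences of the bias identities to confirm equalities, and enumerate finite biases and homomorphisms to confirm disequalities). Finally, the assertion that the variety of all biases is generated by the $\fI_N$ follows because every finite bias embeds into some $\fI_N$ (each finite Boolean inverse semigroup, being in particular a finite inverse semigroup, embeds by Vagner--Preston into $\fI_N$ for $N$ its underlying set size, and this embedding is automatically additive since finite orthogonal joins are determined by the action), so the finite biases separating points of free biases all lie in $\Var(\setm{\fI_N}{N\in\ZZ^+})$, forcing this variety to contain all free biases and hence all biases.

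The main obstacle I anticipate is the careful bookkeeping in the middle step: translating ``$\vr$ fails at $\vec{x}$ in $\Fi{\gS}$'' into ``a specific finite set of disequations holds at $\vec{x}$'', and then invoking residual finiteness of the free inverse semigroup in the correct form (separating finitely many pairs simultaneously by a single finite quotient). The inequality relations $s\leq t$ need to be rephrased as equalities $s=t\,\dd(s)$ so that the whole formula $\vr$ lives in the pure equational language, after which the combinatorial extraction of the finite witnessing set of disequations is straightforward but must be stated precisely. Everything else — the two applications of Proposition~\ref{P:BIS2IS}, the appeal to Corollary~\ref{C:CanFormBias}, the extension of the separating homomorphism to the universal bias, and the embedding of finite biases into symmetric ones — is formal.
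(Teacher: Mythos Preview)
Your approach is essentially the same as the paper's: reduce via Proposition~\ref{P:BIS2IS} to the failure of a positive quantifier-free formula~$\vr$ in~$\Fi{\gS}$, push that failure into a finite inverse semigroup quotient using Munn's residual finiteness, apply Corollary~\ref{C:CanFormBias} to get a finite bias, and apply Proposition~\ref{P:BIS2IS} once more. Your extra care about why the failure of~$\vr$ descends to a finite quotient (extracting a finite set of disequations and separating them simultaneously) is exactly the content the paper leaves implicit in its one-line appeal to Munn.

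There is one genuine slip in your final paragraph: the Vagner--Preston embedding of a finite Boolean inverse semigroup~$S$ into~$\fI_S$ is \emph{not} additive. Indeed $\rho_0$ has domain $0\cdot S=\set{0}$, not~$\es$, so~$\rho$ does not even send~$0$ to~$0$; consequently orthogonal elements of~$S$ need not map to orthogonal partial bijections. The paper avoids this by citing the Lawson--Lenz duality (equivalently, the representation of a finite Boolean inverse semigroup by partial bijections on the atoms of its idempotent Boolean algebra), which does give a bias embedding $\Ub{F}\hookrightarrow\fI_N$. This only affects the ``generated by the~$\fI_N$'' clause, not residual finiteness itself, and is easily repaired by the same citation.
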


\begin{proof}
Let~$\Sigma$ be an alphabet and let~$x$ and~$y$ be elements of the free bias~$\Fb{\Sigma}$ on~$\Sigma$, such that $x\neq y$.
We need to find a positive integer~$N$ and a bias homomorphism $\gf\colon\Fb{\Sigma}\to\fI_N$ such that $\gf(x)\neq\gf(y)$.
Write $x=\vp(x_1,\dots,x_n)$ and $y=\vq(x_1,\dots,x_n)$, for bias terms~$\vp$ and~$\vq$ and elements $x_1,\dots,x_n\in\Sigma$, and denote by~$\vr$ the positive quantifier-free formula associated to $(\vp,\vq)$ \emph{via} Proposition~\ref{P:BIS2IS}.
Now it follows from Proposition~\ref{P:BIS2IS} that the formula
 \[
 \vr(x_1,\dots,x_n)
 \]
does not hold in the free inverse semigroup~$\Fi{\Sigma}$.
Since, by Munn's Theorem~\cite{Munn1974}, $\Fi{\Sigma}$ is residually finite, there are a finite inverse semigroup~$T$ and a homomorphism $\psi\colon\Fi{\Sigma}\to T$ such that the formula
 \[
 \vr(\psi(x_1),\dots,\psi(x_n))
 \]
does not hold in~$T$.
Again by Proposition~\ref{P:BIS2IS} and denoting by $\ol{\psi}\colon\Fb{\Sigma}\to\Ub{T}$ the unique extension of~$\psi$ to a bias homomorphism, this means that
 \[
 \vr(\ol{\psi}(x_1),\dots,\ol{\psi}(x_n))
 \]
holds within~$\Ub{T}$;
that is, $\ol{\psi}(x)\neq\ol{\psi}(y)$ within~$\Ub{T}$.
Now by Corollary~\ref{C:CanFormBias}, $\Ub{T}$ is a finite bias, thus, by the Lawson-Lenz duality from~\cite{LaLe13} (see also \cite[Section 3-3]{WBIS}), there are a positive integer~$N$ and a bias embedding $\theta\colon\Ub{T}\hookrightarrow\nobreak\fI_N$.
Set $\gf=\theta\circ\ol{\psi}$.
Then $\gf(x)\neq\gf(y)$, which completes the proof that~$\Fb{\gS}$ is residually finite.
By McKinsey's work~\cite{McKin1943}, it follows that if~$\gS$ is finite, then the word problem for~$\Fb{\gS}$ is decidable.
\end{proof}

\section{Homogeneous sequences and rook matrices}
\label{S:Rook}

In this section we state an analogue, for biases, of the block matrix decomposition of an endomorphism of a module (Lemma~\ref{L:Mne1Se1}).
We also characterize, in Lemma~\ref{L:Typ=Z}, the Boolean inverse monoids with type monoid~$\ZZ^+$, and we describe in Proposition~\ref{P:BISFinIdp} the Boolean inverse monoids with finite sets of idempotents.

The terminology of the following definition is inspired by von~Neumann's work on regular rings.

\begin{definition}\label{S:Homog}
Let~$S$ be a Boolean inverse semigroup.
A finite sequence $(e_1,\dots,e_n)$ of idempotent elements of~$S$ is \emph{homogeneous} if $e_ie_j=0$ and $e_i\scD e_j$, whenever $i,j\in[1,n]$ with $i\neq j$.
\end{definition}

\begin{lemma}\label{L:Mne1Se1}
Let~$n$ be a positive integer, let~$S$ and~$T$ be Boolean inverse semigroups with~$S$ an additive ideal of~$T$, and let $(e_1,\dots,e_n)$ be a homogeneous sequence in~$T$.
Set $e=\bigoplus_{i=1}^ne_i$.
Then $eSe\cong\Matp{n}{e_1Se_1}$.
\end{lemma}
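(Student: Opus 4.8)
The statement is a Morita-type equivalence: we want to identify the corner $eSe$ with the bias of $n \times n$ generalized rook matrices over the corner $e_1Se_1$. The plan is to build an explicit isomorphism using a fixed system of "matrix unit" partial isometries linking $e_1$ to each $e_i$. Since $e_i \scD e_j$ for all $i,j$, we may choose, for each $i \in [1,n]$, an element $t_i \in T$ with $\dd(t_i) = e_1$ and $\rr(t_i) = e_i$; normalize $t_1 = e_1$. Because $S$ is an additive ideal of $T$ and each $e_i \in eSe \subseteq S$ (note $e_i = e_i e \le e$ and $e \in S$ as $S$ is an ideal containing… — actually one checks $e \in S$ since $e_1 \scD e_i$ forces $e_i \in S$ once $e_1 \in S$; and $e_1 = e_1 e_1 \le e$, so from $eSe \subseteq S$ we need $e \in S$, which follows because each $e_i \in S$: indeed $e_i \scD e_1$ and $S$ being an ideal is closed under $\scD$-equivalence of idempotents — more carefully, $t_i = e_i t_i \in TS \subseteq S$, hence $e_i = \rr(t_i) = t_i t_i^{-1} \in S$, hence $e = \bigoplus e_i \in S$). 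Thus the $t_i$ and all relevant products lie in $S$.

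**Main construction.** Define $\Phi\colon eSe \to \Matp{n}{e_1Se_1}$ by $\Phi(x) = (t_i^{-1} x\, t_j)_{(i,j)\in[n]\times[n]}$; note $t_i^{-1} x t_j$ has domain $\le \dd(t_j) = e_1$ and range $\le \rr(t_i^{-1}) = e_1$, so its entries lie in $e_1Se_1$ as required. First I would verify that $\Phi(x)$ is genuinely a generalized rook matrix: for $j \ne k$, $(t_i^{-1}xt_j)^{-1}(t_i^{-1}xt_k) = t_j^{-1} x^{-1} \rr(t_i)x t_k$; since $\rr(t_i) \le e$ and $x = exe$, this is $t_j^{-1}(x^{-1}x) t_k$, and the idempotent $\dd(x) \le e$ commutes appropriately — one pushes it through to get $t_j^{-1}t_k \cdot (\text{something} \le e_1)$, and $t_j^{-1}t_k = 0$ because $\rr(t_j) = e_j \perp e_k = \rr(t_k)$. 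Symmetrically for $x_{j,i}x_{k,i}^{-1}$. Then I would check that $\Phi$ is a homomorphism of biases — multiplicativity follows from $\sum_j t_j t_j^{-1} = \bigoplus_j e_j = e$ acting as the identity on $eSe$, so $(\Phi(x)\Phi(y))_{i,k} = \sum_j t_i^{-1}x t_j t_j^{-1} y t_k = t_i^{-1} x e y t_k = t_i^{-1}(xy)t_k$; compatibility with ${}^{-1}$, with $0$, and with $\oplus$ (hence with $\sdf$ and $\spl$) is immediate from the formula. For the inverse, define $\Psi\colon \Matp{n}{e_1Se_1} \to eSe$ by $\Psi((x_{i,j})) = \bigoplus_{i,j} t_i x_{i,j} t_j^{-1}$; I would check the orthogonal join on the right-hand side is well-defined using the rook-matrix conditions on $(x_{i,j})$ together with $\rr(t_i) \perp \rr(t_k)$ and $\dd(t_j)\perp\dd(t_l)$, and then verify $\Phi \circ \Psi = \id$ and $\Psi \circ \Phi = \id$ by direct computation, again using $\sum_j t_j t_j^{-1} = e$.

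**Main obstacle.** The genuine technical content is checking that the various orthogonal joins and sums are well-defined and that $\Phi(x)$ satisfies the rook-matrix identities — this is where the hypotheses "$S$ is an additive ideal of $T$" and "$(e_1,\dots,e_n)$ homogeneous" both get used (the ideal hypothesis to keep everything inside $S$ and to have the joins $\bigoplus t_i x_{i,j} t_j^{-1}$ available in $S$; homogeneity to supply the linking elements $t_i$ and the orthogonality $e_i \perp e_j$). These are all routine but slightly fiddly inverse-semigroup manipulations of the kind the paper elsewhere performs without comment; I expect the proof to proceed by defining $\Phi$, remarking that the rook-matrix conditions and homomorphism properties follow from $\bigoplus_{j=1}^n t_j t_j^{-1} = e$ and the orthogonality of the $e_i$, exhibiting $\Psi$ as the two-sided inverse, and leaving the verifications to the reader. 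One subtlety worth flagging explicitly: one must confirm $t_i^{-1} x t_j \in e_1 S e_1$ rather than merely in $T$, which uses that $x \in S$ and $S$ is closed under multiplication by elements of $T$ on both sides (it being an ideal), so $t_i^{-1} x t_j \in S$; combined with $\dd$ and $\rr$ both $\le e_1$, this places it in $e_1 S e_1$.
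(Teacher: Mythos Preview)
Your approach is exactly the paper's: pick $t_i\in T$ with $\dd(t_i)=e_1$, $\rr(t_i)=e_i$, $t_1=e_1$, define $\Phi(x)=(t_i^{-1}xt_j)_{i,j}$ and its inverse $\Psi((x_{i,j}))=\bigoplus_{i,j}t_ix_{i,j}t_j^{-1}$, and leave the verifications as routine; the paper does the same (with~$c_i$ in place of~$t_i$) and simply declares the checks ``straightforward, if not a bit tedious.'' One small point: your parenthetical attempt to show $e\in S$ is both unnecessary and circular (you use $t_i\in S$ to get $e_i\in S$, but $t_i\in S$ is not given), yet this does not matter---as you correctly observe at the end, the ideal property alone gives $t_i^{-1}xt_j\in TST\subseteq S$ and hence in $e_1Se_1$, with no need for $e$ or the $t_i$ to lie in~$S$.
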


\begin{proof}
For each $i\in[1,n]$, we pick $c_i\in T$ such that $\dd(c_i)=e_1$ and $\rr(c_i)=\nobreak e_i$.
We may assume that $c_1=e_1$.
Let $\gf\colon eSe\to\Matp{n}{e_1Se_1}$ and $\psi\colon\Matp{n}{e_1Se_1}\to eSe$ be the maps given by
 \[
 \gf(x)=\begin{pmatrix}
 c_1^{-1}xc_1 & c_1^{-1}xc_2 & \dots & c_1^{-1}xc_n\\
 c_2^{-1}xc_1 & c_2^{-1}xc_2 & \dots & c_2^{-1}xc_n\\
 \vdots & \vdots & \ddots & \vdots\\
 c_n^{-1}xc_1 & c_n^{-1}xc_2 & \dots & c_n^{-1}xc_n
 \end{pmatrix}\,,\quad
 \text{for each }x\in eSe\,,
 \]
and
 \[
 \psi\pI{(x_{i,j})_{(i,j)\in[n]\times[n]}}=
 \bigoplus_{(i,j)\in[n]\times[n]}
 c_ix_{i,j}c_j^{-1}\,,\ 
 \text{for each }
 (x_{i,j})_{(i,j)\in[n]\times[n]}\in\Matp{n}{e_1Se_1}\,.
 \]
The verification that the maps~$\gf$ and~$\psi$ are well defined and mutually inverse semigroup isomorphisms is straightforward, if not a bit tedious.
\end{proof}

The \emph{Boolean unitization} of a Boolean inverse semigroup~$S$, introduced in \cite[Chapter~6]{WBIS}, is the unique (up to isomorphism) Boolean inverse monoid~$\widetilde{S}$ in which every element has the form $(1\sd e)\oplus x$, where $e\in\Idp S$ and $x\in S$, and such that $\widetilde{S}=S$ if~$S$ is unital.
In particular (cf. \cite[Proposition~6-6.5]{WBIS}), $S$ is an additive ideal of~$\widetilde{S}$ and if~$S$ is not unital, then $\widetilde{S}/S$ is the two-element inverse semigroup.

\begin{lemma}\label{L:MmnS}
Let~$S$ be a Boolean inverse semigroup and let~$m$ and~$n$ be positive integers.
Then $\Matp{mn}{S}\cong\Matp{n}{\Matp{m}{S}}$.
\end{lemma}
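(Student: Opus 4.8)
The plan is to exhibit an explicit isomorphism $\Matp{mn}{S}\to\Matp{n}{\Matp{m}{S}}$ by partitioning the index set $[mn]$ into $n$ consecutive blocks of size $m$. Concretely, fix the bijection $[mn]\to[n]\times[m]$ sending an index $p$ to the pair $(\lceil p/m\rceil,\,((p-1)\bmod m)+1)$; write $p\mapsto(\pi(p),\rho(p))$ for this map. Then, given a generalized rook matrix $x=(x_{p,q})_{(p,q)\in[mn]\times[mn]}$ over $S$, I would define $\Phi(x)$ to be the matrix in $\Matp{n}{\Matp{m}{S}}$ whose $(i,j)$th entry is the $m\times m$ block $\bigl(x_{p,q}\bigr)_{\pi(p)=i,\ \pi(q)=j}$, indexed internally by $(\rho(p),\rho(q))$. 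The inverse map $\Psi$ simply reassembles an $n\times n$ matrix of $m\times m$ blocks into a single $mn\times mn$ matrix via the same index bijection.

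The key steps, in order, are: (1) check that $\Phi(x)$ really is a generalized rook matrix of order $n$ over $\Matp{m}{S}$ — that is, that each $m\times m$ block lies in $\Matp{m}{S}$ (this is the rook condition on $x$ restricted to a single pair of blocks, which is a subcase of the global rook condition on $x$), and that the outer rook condition $\Phi(x)_{i,j}^{-1}\Phi(x)_{i,k}=\Phi(x)_{j,i}\Phi(x)_{k,i}^{-1}=0$ in $\Matp{m}{S}$ holds for $j\neq k$ (this unravels, entry by entry, to the identities $x_{p,q}^{-1}x_{p,r}=0=x_{q,p}x_{r,p}^{-1}$ for $p,q,r$ with $q,r$ in distinct blocks, again a subcase of the rook condition on $x$); (2) verify that $\Phi$ preserves the multiplication, which amounts to comparing $\bigl(\sum_q x_{p,q}y_{q,r}\bigr)$ grouped by blocks with the block-matrix product formula — both sides distribute the single sum over $[mn]$ as an outer sum over $n$ block-indices of inner sums over $[m]$ (orthogonality of the relevant joins is guaranteed because $xy$ is again a rook matrix, so the joins in question exist); (3) verify $\Phi$ preserves inversion, which is immediate since inversion of a rook matrix is entrywise transpose-and-invert, and the index bijection commutes with transposition; (4) observe that $\Phi$ and $\Psi$ are mutually inverse bijections, which is clear from construction. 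Since a bijective semigroup homomorphism between inverse semigroups that also preserves $0$ and all entries is automatically a bias isomorphism (it is an additive semigroup isomorphism), this suffices.

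The main obstacle, as in Lemma~\ref{L:Mne1Se1}, is purely bookkeeping: one must be careful that every orthogonal join invoked when rewriting a single sum over $[mn]$ as a nested sum over $[n]$ and $[m]$ actually exists and is associative in the required order. This is handled by the remark (already used implicitly throughout the excerpt) that whenever a generalized rook matrix product is formed, the relevant families of entries are pairwise orthogonal, so their joins exist and may be regrouped freely; concretely, the entries $\{x_{p,q}y_{q,r}\mid q\in[mn]\}$ summed to form the $(p,r)$ entry of $xy$ are pairwise orthogonal, hence so are the partial sums over each block, so the nested join equals the flat join. No genuinely new idea beyond the index reindexing is needed, and I would simply say the verifications are straightforward, in the same spirit as the proof of Lemma~\ref{L:Mne1Se1}.
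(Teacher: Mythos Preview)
Your argument is correct and is the ``obvious'' direct construction: reindex $[mn]$ as $[n]\times[m]$ and check by hand that block decomposition gives a bias isomorphism. The paper takes a different, more structural route. It first passes to the Boolean unitization~$\widetilde{S}$, so that $\Matp{mn}{\widetilde{S}}$ is unital and the diagonal matrices~$a_i$ with a single~$1$ at position~$(i,i)$ exist; grouping these into $n$ consecutive sums $e_k=\bigoplus_{i=(k-1)m+1}^{km}a_i$ yields a homogeneous sequence in~$\Matp{mn}{\widetilde{S}}$, and then Lemma~\ref{L:Mne1Se1} (with $T=\Matp{mn}{\widetilde{S}}$ and the additive ideal $\Matp{mn}{S}$) gives the isomorphism $\Matp{mn}{S}=e\Matp{mn}{S}e\cong\Matp{n}{e_1\Matp{mn}{S}e_1}\cong\Matp{n}{\Matp{m}{S}}$ in one stroke. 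What your approach buys is self-containment: you avoid the unitization detour and any appeal to Lemma~\ref{L:Mne1Se1}, at the cost of writing out the entrywise verifications (which are, as you say, pure bookkeeping). What the paper's approach buys is economy and uniformity: the same lemma about homogeneous sequences is reused several times in the paper (e.g., in Lemma~\ref{L:Typ=Z} and Proposition~\ref{P:BISFinIdp}), so this particular corollary comes essentially for free once that machinery is in place.
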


\begin{proof}
Observe, first, that $\Matp{mn}{S}$ is an additive ideal of $\Matp{mn}{\widetilde{S}}$.
Whenever\linebreak $1\leq i\leq mn$, we denote by~$a_i$ the diagonal matrix, in $\Matp{mn}{\widetilde{S}}$, with unique nonzero entry at~$(i,i)$, equal to~$1$.
Setting $e_k=\bigoplus_{i=1+(k-1)m}^{km}a_i$, the finite sequence $(e_1,\dots,e_n)$ is homogeneous in $\Matp{mn}{\widetilde{S}}$, and $e_1\Matp{mn}{S}e_1$ consists of all generalized rook matrices, over~$S$, all whose entries outside $[m]\times[m]$ are zero; thus it is isomorphic to
$\Matp{m}{S}$.
Apply Lemma~\ref{L:Mne1Se1}.
\end{proof}

\begin{lemma}\label{L:Typ=Z}
Let~$S$ be a Boolean inverse monoid and let~$m$ be a positive integer.
Then $(\Typ S,\typ_S(1))\cong(\ZZ^+,m)$ if{f} $S\cong\Matp{m}{\pz{G}}$ for some group~$G$.
\end{lemma}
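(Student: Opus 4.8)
The plan is to establish the two implications separately. The forward one, $S\cong\Matp{m}{\pz{G}}\Rightarrow(\Typ S,\typ(1))\cong(\ZZ^{+},m)$, is a direct computation: by Proposition~\ref{P:IdpMnS} the idempotents of $\Matp{m}{\pz{G}}$ are the diagonal matrices with entries in $\set{0,1_G}$, so each is coded by its support $I\subseteq[m]$, and one checks the routine facts that two such idempotents are $\scD$-equivalent precisely when their supports have the same cardinality (transport one support onto the other by a permutation rook matrix; conversely, a rook matrix has as many nonzero rows as nonzero columns), while orthogonal joins correspond to disjoint unions of supports. Hence the type interval $\Int{\Matp{m}{\pz{G}}}$ is, as a partial commutative monoid, $\set{0,1,\dots,m}$ with $i+j$ defined exactly when $i+j\le m$; its universal monoid is $\ZZ^{+}$, with $\typ\pI{(1_G)_{(i,i)}}$ the generator, so $\typ(1)=\typ\pII{\bigoplus_{i=1}^{m}(1_G)_{(i,i)}}=m$. (Alternatively one may quote the invariance of the type monoid under the $\Matp{m}{\,\cdot\,}$ construction from \cite{WBIS}.)

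For the converse, assume $(\Typ S,\typ(1))\cong(\ZZ^{+},m)$ and identify the two; write $\typ=\typ_S$. The crucial tool is a \emph{lifting principle}: for $a\in\Idp S$, if $\typ(a)=\bb+\bc$ in $\Typ S$, then $a=b\oplus c$ for some orthogonal $b,c\in\Idp S$ with $\typ(b)=\bb$ and $\typ(c)=\bc$. I would derive this from \cite{WBIS}: since $\Int S$ is a lower interval of $\Typ S$ generating it (\cite[Corollary 4-1.4]{WBIS}), the summands $\bb,\bc$ (being $\lep\typ(a)\in\Int S$) lie in $\Int S$ and the sum $\bb+\bc=\typ(a)$ is realized inside $\Int S$, so there are orthogonal $x,y\in S$ with $\typ(x)=\bb$, $\typ(y)=\bc$; then $\rr(x)\oplus\rr(y)$ is an idempotent of type $\bb+\bc=\typ(a)$, hence $\scD$-equivalent to $a$, so there is $w\in S$ with $\dd(w)=\rr(x)\oplus\rr(y)$ and $\rr(w)=a$, and conjugation by $w$ turns the orthogonal decomposition $\rr(x)\oplus\rr(y)$ into $a=\pI{w\rr(x)w^{-1}}\oplus\pI{w\rr(y)w^{-1}}$ with the stated types. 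Applying this $m-1$ times to the splitting $\typ(1)=m=1+\dots+1$ of $\ZZ^{+}$ produces idempotents $e_1,\dots,e_m$ with $1=e_1\oplus\dots\oplus e_m$ and every $\typ(e_i)=1$; since equal types force $\scD$-equivalence, $(e_1,\dots,e_m)$ is a homogeneous sequence with join $1$, and Lemma~\ref{L:Mne1Se1}, applied with $S$ regarded as an additive ideal of itself and $e=1$, yields $S\cong\Matp{m}{e_1Se_1}$.

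It remains to see that $e_1Se_1$ is a group with zero. Any $a\in\Idp{e_1Se_1}$ satisfies $a\le e_1$, so $\typ(a)\lep\typ(e_1)=1$, hence $\typ(a)\in\set{0,1}$; if $\typ(a)=0$ then $a\scD 0$, so $a=0$, and if $\typ(a)=1=\typ(e_1)$ then, taking types in $e_1=a\oplus(e_1\sd a)$, $\typ(e_1\sd a)=0$, so $e_1\sd a=0$ and $a=e_1$. Thus $\Idp{e_1Se_1}=\set{0,e_1}$, whence every nonzero element of $e_1Se_1$ has domain and range idempotent $e_1$, i.e.\ is invertible there; so $e_1Se_1=G\cup\set{0}$ with $G=(e_1Se_1)\setminus\set{0}$ a group, that is, $e_1Se_1\cong\pz{G}$, and $S\cong\Matp{m}{\pz{G}}$. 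The main obstacle is the lifting principle of the second paragraph: converting the purely arithmetical splitting of $\typ(1)$ in $\ZZ^{+}$ into an honest orthogonal splitting of $1$ inside $S$. This is precisely what the ``$\Int S$ is a lower interval of $\Typ S$'' apparatus of \cite{WBIS} (together with the realization statement \cite[Lemma 4-1.6]{WBIS}) is designed to supply; once it is available, the rest is bookkeeping with Proposition~\ref{P:IdpMnS}, Lemma~\ref{L:Mne1Se1}, and the computation of $\Typ{\pz G}$.
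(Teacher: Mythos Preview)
Your proof is correct and follows essentially the same route as the paper's: the forward direction via Proposition~\ref{P:IdpMnS} and counting $\scD$-classes of idempotents, the converse via \cite[Lemma~4-1.6]{WBIS} to realize $1=e_1\oplus\cdots\oplus e_m$ with $\typ(e_i)=1$, then Lemma~\ref{L:Mne1Se1} to get $S\cong\Matp{m}{e_1Se_1}$, and finally a two-idempotent argument to identify $e_1Se_1$ as a group with zero. The only organizational difference is that the paper isolates the case $m=1$ first and, for general~$m$, invokes \cite[Theorem~4-2.6]{WBIS} to compute $\Typ(e_1Se_1)$ before reducing to that case, whereas you argue directly with $\typ_S$ restricted to the corner; both arguments are the same in substance.
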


\begin{proof}
We start with the case where $m=1$.
Since~$\pz{G}$ has exactly one nontrivial idempotent, we get the isomorphism $(\Typ\pz{G},\typ_{\pz{G}}(1))\cong(\ZZ^+,1)$.
Suppose, conversely, that $(\Typ S,\typ_S(1))\cong(\ZZ^+,1)$.
For any $a\in\Idp{S}$, $1=\typ_S(1)=\typ_S(a)+\typ_S(1-a)$, thus either $\typ_S(a)=0$ or $\typ_S(1-a)=0$, that is, either $a=0$ or $a=1$.
It follows that $G\eqdef S\setminus\set{0}$ is a group and $S\cong\pz{G}$.

Now we deal with the general case.
Since the idempotent elements of~$\Matp{m}{\pz{G}}$ form a Boolean algebra with~$m$ atoms, all pairwise $\scD$-equivalent (cf. Proposition~\ref{P:IdpMnS}), we get the isomorphism $\pI{\Typ\Matp{m}{\pz{G}},\typ(1)}\cong(\ZZ^+,m)$.
Let, conversely, $S$ be a Boolean inverse monoid such that $(\Typ S,\typ(1))\cong(\ZZ^+,m)$.
It follows from \cite[Lemma~4-1.6]{WBIS} that there is a decomposition $1=e_1\oplus\cdots\oplus e_m$ with each $\typ(e_i)=1$.
In particular, $(e_1,\dots,e_m)$ is a homogeneous sequence in~$S$.
By Lemma~\ref{L:Mne1Se1}, it follows that $S\cong\Matp{m}{e_1Se_1}$.
By (the proof of) \cite[Theorem~4-2.6]{WBIS}, the following isomorphism holds:
 \begin{equation}\label{Eq:TypeSe}
 \pI{\Typ(e_1Se_1),\typ_{e_1Se_1}(e_1)}\cong
 \pI{\ZZ^+,1}\,.
 \end{equation}
By the first part of the present proof, it follows that $e_1Se_1\cong\pz{G}$ for some group~$G$.
Therefore, $S\cong\Matp{m}{e_1Se_1}\cong\Matp{m}{\pz{G}}$.
\end{proof}

In \cite[Theorem~4.18]{Laws12}, Mark Lawson describes finite Boolean inverse monoids in terms of \emph{groupoids}.
The methods of the present section yield the following description of those monoids, and, more generally, of the Boolean inverse monoids with finite sets of idempotents, in terms of \emph{groups}.
Although we will not need this result in the rest of the paper, we found it worth recording here.

\begin{definition}\label{D:GroupMat}
A Boolean inverse monoid is \emph{fully group-matricial} if it is isomorphic to $\Matp{n}{\pz{G}}$, for some positive integer~$n$ and some group~$G$.
\end{definition}

\begin{proposition}\label{P:BISFinIdp}
Let~$S$ be a Boolean inverse monoid.
Then~$S$ has finitely many idempotent elements if{f} it is isomorphic to a finite product of fully group-matricial Boolean inverse semigroups.
\end{proposition}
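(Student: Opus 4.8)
The plan is to prove both directions. For the nontrivial direction, suppose that $S$ is a Boolean inverse monoid with $\Idp S$ finite; we must exhibit $S$ as a finite product of fully group-matricial Boolean inverse semigroups. The starting point is that $\Idp S$ is a finite (generalized) Boolean algebra, hence a finite Boolean algebra since $S$ is unital, so $1$ is a finite orthogonal join $1 = a_1 \oplus \cdots \oplus a_r$ of the atoms $a_1, \dots, a_r$ of $\Idp S$. The relation $\scD$ on these atoms is an equivalence relation; group the atoms into $\scD$-classes, say with representatives yielding sizes $n_1, \dots, n_s$, and let $f_t$ be the orthogonal join of the atoms in the $t$-th class. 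Each $(a_i : a_i \leq f_t)$ is then a homogeneous sequence in $S$, and I would first check that the $f_t$ are central idempotents of $S$: this is the key structural point, and it follows because an atom of $\Idp S$ can only be $\scD$-related, within a finite idempotent semilattice, to atoms, so for any $x \in S$ the idempotents $\dd(x\res)$ and $\rr(x\res)$ computed on the part of $x$ lying over $f_t$ must again lie below $f_t$; more precisely, for $x \in S$ one has $f_t x f_t \oplus \bigoplus_{u \neq t} f_u x f_u = x$ because every atom below $\dd(x)$ is $\scD$-related (via the appropriate restriction of $x$) to an atom below $\rr(x)$, forcing $x$ to decompose along the $f_t$. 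Granting this, $S \cong \prod_{t=1}^s f_t S f_t$ as Boolean inverse monoids (a finite product along a decomposition of the identity into orthogonal central idempotents).

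It then remains to identify each factor $f_t S f_t$. Since $S$ is an additive ideal of itself and $(a_i : a_i \leq f_t)$ is a homogeneous sequence in $S$ with orthogonal join $f_t$, Lemma~\ref{L:Mne1Se1} gives $f_t S f_t \cong \Matp{n_t}{aSa}$, where $a$ is one of the atoms below $f_t$. Now $aSa$ is a Boolean inverse monoid with identity $a$, and $\Idp(aSa) = \set{b \in \Idp S : b \leq a} = \set{0, a}$ because $a$ is an atom of $\Idp S$; hence $aSa \setminus \set{0}$ is a group $G_t$ and $aSa \cong \pz{G_t}$. Therefore $f_t S f_t \cong \Matp{n_t}{\pz{G_t}}$ is fully group-matricial, and $S \cong \prod_{t=1}^s \Matp{n_t}{\pz{G_t}}$, as desired. (Alternatively, one can invoke Lemma~\ref{L:Typ=Z} applied to each $f_t S f_t$ after observing that its type monoid, with distinguished element $\typ(f_t)$, is $(\ZZ^+, n_t)$, since $\Typ S$ is a conical refinement monoid whose type interval is generated by the finitely many $\scD$-classes of idempotents below $1$; but the direct argument via Lemma~\ref{L:Mne1Se1} is cleaner here.)

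For the converse, suppose $S \cong \prod_{t=1}^s \Matp{n_t}{\pz{G_t}}$ is a finite product of fully group-matricial Boolean inverse semigroups. By Proposition~\ref{P:IdpMnS}, the idempotents of $\Matp{n_t}{\pz{G_t}}$ are exactly the diagonal matrices with idempotent entries, and since $\Idp{\pz{G_t}} = \set{0,1}$ has two elements, $\Matp{n_t}{\pz{G_t}}$ has exactly $2^{n_t}$ idempotents. Hence $\Idp S \cong \prod_{t=1}^s \Idp{\Matp{n_t}{\pz{G_t}}}$ is finite, with $\card \Idp S = \prod_{t=1}^s 2^{n_t}$. This completes the proof.

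The main obstacle is the centrality claim for the idempotents $f_t$: one must argue carefully that in a Boolean inverse monoid with \emph{finite} idempotent semilattice, an element cannot "mix" atoms from different $\scD$-classes, so that $S$ genuinely splits as a direct product rather than merely exhibiting $1$ as a homogeneous-up-to-permutation join. Once that decomposition is in place, the identification of each factor is an immediate application of Lemma~\ref{L:Mne1Se1} together with the atom observation.
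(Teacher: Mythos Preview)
Your proposal is correct and follows essentially the same route as the paper: partition the atoms of~$\Idp S$ into $\scD$-classes, show that the join~$f_t$ of each class is central (the paper does this via $x f_t x^{-1}\leq f_t$, you via the equivalent decomposition $x=\bigoplus_t f_t x f_t$, both resting on the observation that conjugation by~$x$ sends an atom to a $\scD$-equivalent atom or zero), then apply Lemma~\ref{L:Mne1Se1} to each factor and identify~$aSa\cong\pz{G_t}$ since~$a$ is an atom. The only difference is cosmetic: the paper's centrality argument is slightly more streamlined, and it omits the easy converse direction that you spell out.
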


\begin{proof}
We prove the nontrivial direction.
Assume that the Boolean algebra~$B$ of all idempotent elements of~$S$ is finite, denote by~$A$ the set of all its atoms, and denote by~$\bgq$ the restriction of Green's equivalence relation~$\scD$ to~$A$.
The elements $e_{\ba}=\bigvee\ba$, for $\ba\in A/{\bgq}$, are all idempotent, and they satisfy the relation
 \begin{equation}\label{Eq:ebapart}
 1=\bigoplus\vecm{e_{\ba}}{\ba\in A/{\bgq}}
 \text{ within }S\,.
 \end{equation}
We claim that~$e_{\ba}$ belongs to the center of~$S$, for every $\ba\in P/{\bgq}$.
Indeed, let $x\in S$.
Then for every $p\in\ba$, the element $xpx^{-1}$ is either zero or~$\bgq$-equivalent to~$p$, thus it belongs to $\ba\cup\set{0}$, and thus it lies below~$e_{\ba}$.
It follows that the element $xe_{\ba}x^{-1}=\bigoplus\vecm{xpx^{-1}}{p\in\ba}$ lies below~$e_{\ba}$.
Hence,
 \[
 xe_{\ba}=xx^{-1}xe_{\ba}=xe_{\ba}x^{-1}x\leq e_{\ba}x\,.
 \]
The proof that $e_{\ba}x\leq xe_{\ba}$ is symmetric.
This completes the proof of our claim.

By virtue of~\eqref{Eq:ebapart}, this yields a direct decomposition $S\cong\prod_{\ba\in P/{\bgq}}{e_{\ba}S}$, thus reducing the problem to the case where~$\bgq$ has exactly one equivalence class.
Thus, denoting by~$e_1$, \dots, $e_n$ the distinct atoms of~$B$, the finite sequence $(e_1,\dots,e_n)$ is homogeneous.
By Lemma~\ref{L:Mne1Se1}, $S\cong\Matp{n}{e_1Se_1}$.
Since~$e_1$ is the only nontrivial idempotent in~$e_1Se_1$, we get $e_1Se_1\cong\pz{G}$ for some group~$G$.
\end{proof}

Alternatively, Proposition~\ref{P:BISFinIdp} can also be obtained from \cite[Theorem~4.18]{Laws12}(1), by decomposing the groupoid obtained there into its connected components, then using Lemma~\ref{L:Mne1Se1} as above.

\section{Type monoids of finitely subdirectly irreducible biases}
\label{S:VarBIS}

The main aim of this section is to establish Lemma~\ref{L:1/2SIbias}, which states that the type monoid of every (finitely) subdirectly irreducible bias is \emph{prime} (cf. Definition~\ref{D:PrimeMon}).

Denote by~$\bgq_I$ the congruence generated by~$I\times\set{0}$, for an additive ideal~$I$ of a Boolean inverse semigroup~$S$ (cf. \cite[Proposition~3-4.6]{WBIS}; $\bgq_I$ is denoted there by~$\equiv_I$).
Recall that this congruence can be defined by
  \begin{equation}\label{Eq:defequivI}
  (x,y)\in\bgq_I\Leftrightarrow(\exists z)
 \pI{z\leq x\text{ and }z\leq y\text{ and }
 \set{x\sd z,y\sd z}\subseteq I}\,,
 \quad\text{for all }x,y\in S\,.
 \end{equation}

\begin{lemma}\label{L:thetaIcapJ}
$\bgq_I\cap\bgq_J=\bgq_{I\cap J}$, for all additive ideals~$I$ and~$J$ of~$S$.
\end{lemma}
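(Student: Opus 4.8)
The plan is to prove the two inclusions separately, using the explicit description~\eqref{Eq:defequivI} of the congruence~$\bgq_K$ attached to an additive ideal~$K$. The inclusion $\bgq_{I\cap J}\subseteq\bgq_I\cap\bgq_J$ is the routine direction: $I\cap J$ is an additive ideal contained in both~$I$ and~$J$, hence the congruence it generates is contained in each of~$\bgq_I$ and~$\bgq_J$; alternatively, one reads this off directly from~\eqref{Eq:defequivI}, since a witness~$z$ for $(x,y)\in\bgq_{I\cap J}$ (with $x\sd z, y\sd z\in I\cap J$) is simultaneously a witness for membership in~$\bgq_I$ and in~$\bgq_J$.

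For the nontrivial inclusion $\bgq_I\cap\bgq_J\subseteq\bgq_{I\cap J}$, suppose $(x,y)\in\bgq_I\cap\bgq_J$. By~\eqref{Eq:defequivI} there are $z,z'\in S$ with $z\leq x$, $z\leq y$, $\{x\sd z,y\sd z\}\subseteq I$, and $z'\leq x$, $z'\leq y$, $\{x\sd z',y\sd z'\}\subseteq J$. The natural move is to set $w=z\wedge z'$ (the meet exists since $z$ and $z'$ are both below~$x$, hence comparable in the relevant sense—any two elements below a common element have a meet in a Boolean inverse semigroup, via the Boolean structure on the idempotents and $w = z\,\dd(z')$). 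Then $w\leq x$ and $w\leq y$, and the key computation is to check that $x\sd w\in I\cap J$ and $y\sd w\in I\cap J$; by symmetry it suffices to handle $x\sd w$. Here one uses that for elements below a common element, $x\sd w = (x\sd z)\oplus(z\sd w)$, that $z\sd w = z\sd z' \leq x\sd z'\in J$ combined with $z\sd w\leq z\leq x$ forcing $z\sd w\in$ (the relevant ideal), so that $z\sd w\in J$ and also $z\sd w \leq z$ gives, using $x\sd z\in I$ and comparing, membership in~$I$ as well; more directly, $z\sd w\leq x\sd w$ and one shows $x\sd w$ decomposes as an orthogonal join of an element of~$I$ and an element of~$J$, each also lying below~$x\sd z\in I$ respectively below~$x\sd z'\in J$. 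Once $x\sd w, y\sd w\in I\cap J$ is established, $w$ witnesses $(x,y)\in\bgq_{I\cap J}$ via~\eqref{Eq:defequivI}, completing the proof.

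The main obstacle I expect is the bookkeeping with skew differences~$\sd$ in the last paragraph: one must be careful that all the meets and differences invoked actually exist (which they do, because everything is happening below the single element~$x$, where the order ideal is a generalized Boolean algebra-like structure and $\sd$ behaves well), and that the decomposition $x\sd w=(x\sd z)\oplus(z\sd w)$ is valid with the two summands orthogonal. This is precisely the kind of elementary but slightly fiddly $\sd$-calculus that the paper elsewhere (e.g.\ in Lemma~\ref{Eq:A-BleqC-D} and the surrounding material) handles routinely, so I would invoke those basic identities rather than reprove them, and keep the argument short.
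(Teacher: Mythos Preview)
Your approach has a genuine error: you take $w=z\wedge z'$, but the correct witness is the \emph{join} $w=z\vee z'$ (which exists because $z$ and $z'$, both lying below~$x$, are compatible). With the meet, the difference $x\sd w$ gets \emph{larger}, not smaller: indeed $x\sd(z\wedge z')=(x\sd z)\vee(x\sd z')$ inside the Boolean interval below~$x$, and this join of an element of~$I$ with an element of~$J$ has no reason to lie in~$I\cap J$. Your decomposition $x\sd w=(x\sd z)\oplus(z\sd w)$ only exhibits $x\sd w$ as an orthogonal sum of something in~$I$ and something in~$J$, which is useless for concluding membership in~$I\cap J$; and the sentence ``$z\sd w\leq z$ gives \dots\ membership in~$I$'' is simply false, since~$z$ itself need not belong to~$I$.

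The paper's argument takes $w=z\vee z'$ (your $u\vee v$). Then from $x\sd z=(x\sd w)\oplus(w\sd z)$ one reads off $x\sd w\leq x\sd z\in I$, and symmetrically $x\sd w\leq x\sd z'\in J$, whence $x\sd w\in I\cap J$ because additive ideals are downward closed. The same reasoning gives $y\sd w\in I\cap J$, and~$w$ is the desired witness. Swap your meet for a join and the rest of your outline goes through in two lines.
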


\begin{proof}
It is sufficient to prove that $\bgq_I\cap\bgq_J\subseteq\bgq_{I\cap J}$.
Let $(x,y)\in\bgq_I\cap\bgq_J$.
By definition (cf.~\eqref{Eq:defequivI}), there are $u,v\leq x,y$ such that both containments $\set{x\sd u,y\sd u}\subseteq I$ and $\set{x\sd v,y\sd v}\subseteq J$ hold.
Since $u,v\leq x$, the elements~$u$ and~$v$ are compatible, thus they have a join~$w$, and $w\leq x$.
{}From $u,v\leq y$ it follows that $w\leq y$.
{}From $x\sd u=(x\sd w)\oplus(w\sd u)$ it follows that $x\sd w\leq x\sd u$, thus, since $x\sd u\in I$, we get $x\sd w\in I$.
Likewise, $x\sd w\in J$, so $x\sd w\in I\cap J$.
Likewise, $y\sd w\in I\cap J$, so~$w$ witnesses that $(x,y)\in\bgq_{I\cap J}$.
\end{proof}

If~$S$ is a \emph{Boolean inverse \msem}, that is, $x\wedge y$ exists for all $x,y\in\nobreak S$, then the satisfaction of~\eqref{Eq:defequivI} needs to be checked only on the element $z=x\wedge y$, which implies immediately that Lemma~\ref{L:thetaIcapJ} can be extended to arbitrary infinite collections of additive ideals.
However, the following example shows that this observation does not extend to the case where~$S$ need not be an inverse \msem.
Recall that an inverse semigroup is a \emph{Clifford inverse semigroup} if it satisfies the identity $\dd(\vx)=\rr(\vx)$ (i.e., $\vx^{-1}\vx=\vx\vx^{-1}$).

\begin{examplepf}\label{Ex:InfCollIdcap0}
A Clifford Boolean inverse monoid~$S$, with an infinite descending sequence $\vecm{I_n}{n\in\ZZ^+}$ of additive ideals such that $\bigcap_{n\in\ZZ^+}I_n=\set{0}$, yet $\bigcap_{n\in\ZZ^+}\bgq_{I_n}$ is not the identity congruence.
\end{examplepf}

\begin{proof}
The example in question is the one of \cite[Example~3-3.5]{WBIS}.
Let us recall its construction.
Denote by~$\cB$ the Boolean algebra of all subsets of~$\NN$ that are either finite or cofinite, and pick any nontrivial group~$G$.
For every $x\in\cB$, we set $N_x=G$ if~$x$ is finite, and $N_x=\set{1}$ if~$x$ is cofinite.
For $g,h\in G$ and $x\in\cB$, let $g\equiv_xh$ hold if $g\equiv h\pmod{N_x}$.
We define an equivalence relation~$\sim$ on~$\cB\times G$ by setting
 \[
 (x,g)\sim(y,h)\quad\text{if}\quad
 (x=y\text{ and }g\equiv_xh)\,,
 \quad\text{for all }x,y\in\cB\text{ and all }g,h\in G\,,
 \]
and we denote by $[x,g]$ the $\sim$-equivalence class of $(x,g)$.
Then~$\sim$ is a semigroup congruence on~$\cB\times G$, and the quotient $S=(\cB\times G)/{\sim}$ is a Boolean inverse monoid, where $\dd[x,g]=\rr[x,g]=[x,1]$ whenever $(x,g)\in\cB\times G$.

For the rest of the proof, we pick any element~$g\in G\setminus\set{1}$, and we set
 \begin{align*}
 a_n&=[\NN\setminus[n],g]\,,\\
 e_n&=[\NN\setminus[n],1]\,,
 \end{align*}
for every $n\in\ZZ^+$.
The set $I_n\eqdef\setm{[x,h]\in S}{x\cap[n]=\es}$ is an additive ideal of~$S$, for every $n\in\ZZ^+$.
Obviously, $\bigcap_{n\in\ZZ^+}I_n=\set{0}$.
On the other hand, for every $n\in\ZZ^+$, both elements $a_0\sd[[n],1]=a_n$ and $e_0\sd[[n],1]=e_n$ belong to~$I_n$, thus the pair $(a_0,b_0)$ belongs to the intersection of all~$\bgq_{I_n}$, with $a_0\neq b_0$.
\end{proof}

\begin{definition}\label{D:PrimeMon}
A conical commutative monoid~$M$ is \emph{prime} if $M\setminus\set{0}$ is downward directed.
\end{definition}

\begin{lemma}[folklore]\label{L:OrthRefMon}
Let two elements~$x$ and~$y$ in a conical refinement monoid~$M$ be \emph{orthogonal}, in notation $x\perp y$, if there is no nonzero $z\in M$ such that $z\lep x$ and $z\lep y$.
Then the following statements hold:
\begin{enumeratei}
\item $x\perp z$ and $y\perp z$ implies that $x+y\perp z$, for all $x,y,z\in M$.

\item $x\perp y$ implies that $nx\perp ny$, for all $x,y\in M$ and every positive integer~$n$.
\end{enumeratei}
\end{lemma}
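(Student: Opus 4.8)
The plan is to prove (i) directly from the refinement property of $M$ together with conicality, and then to obtain (ii) as a purely formal consequence of (i), relying only on the symmetry of the relation $\perp$ (immediate from its definition) and on the transitivity of $\lep$ (it is a preorder). Note first that $0\perp z$ for every $z\in M$: if $w\lep 0$, then $w+t=0$ for some $t\in M$, whence $w=0$ by conicality.

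For (i): suppose $x\perp z$ and $y\perp z$, and assume towards a contradiction that $x+y\not\perp z$. Fix a nonzero $w\in M$ with $w\lep x+y$ and $w\lep z$, say $x+y=w+w'$. Applying the refinement property to the equality $x+y=w+w'$ produces elements $c_{i,j}\in M$, for $i,j\in\set{0,1}$, with $x=c_{0,0}+c_{0,1}$, $y=c_{1,0}+c_{1,1}$, $w=c_{0,0}+c_{1,0}$, and $w'=c_{0,1}+c_{1,1}$. Then $c_{0,0}\lep x$ and $c_{0,0}\lep w\lep z$, so $c_{0,0}\lep z$; since $x\perp z$, this forces $c_{0,0}=0$. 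Symmetrically, $c_{1,0}\lep y$ and $c_{1,0}\lep w\lep z$, and $y\perp z$ forces $c_{1,0}=0$. Hence $w=c_{0,0}+c_{1,0}=0$, a contradiction.

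Next I would record the special case $x=y$ of (i), iterated: \emph{if $x\perp z$ then $nx\perp z$ for every positive integer $n$} (trivial for $n=1$, and $(n+1)x\perp z$ follows from $nx\perp z$, $x\perp z$, and (i)). To prove (ii), assume $x\perp y$. By symmetry $y\perp x$, so the displayed remark gives $ny\perp x$, that is, $x\perp ny$; applying the remark once more, now with the roles of $x$ and $z$ played by $x$ and $ny$, yields $nx\perp ny$, as required.

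There is no genuine obstacle here; the only point calling for care is the bookkeeping in the single use of refinement in (i) — matching the quadruple $(c_{i,j})_{i,j\in\set{0,1}}$ correctly to the two given decompositions of $x+y$ — together with the elementary observations that $\lep$ is transitive and that $w\lep 0$ implies $w=0$.
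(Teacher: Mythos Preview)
Your proof is correct. The paper does not give a proof of this lemma at all---it is stated as ``folklore'' and left to the reader---so there is nothing to compare against; your argument is precisely the standard one, with a single application of refinement for~(i) and an induction using~(i) and the symmetry of~$\perp$ for~(ii).
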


A bias~$S$ is \emph{subdirectly irreducible} (resp., \emph{finitely subdirectly irreducible}) if it has a smallest nonzero congruence (resp., if any two nonzero congruences of~$S$ have nonzero intersection).
Trivially, every subdirectly irreducible bias is finitely subdirectly irreducible.
By using the results of~\cite{WBIS}, it is easy to construct finitely subdirectly irreducible Boolean inverse monoids that are not subdirectly irreducible.

\begin{lemma}\label{L:1/2SIbias}
Let~$S$ be a finitely subdirectly irreducible bias.
Then the type monoid $\Typ S$ is prime.
\end{lemma}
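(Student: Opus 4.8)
The statement to prove is Lemma~\ref{L:1/2SIbias}: if $S$ is a finitely subdirectly irreducible bias, then $\Typ S$ is prime, i.e.\ the nonzero elements of $\Typ S$ form a downward directed set. The plan is to argue by contraposition: I assume that $\Typ S$ is \emph{not} prime and produce two nonzero bias congruences of $S$ with zero intersection, contradicting finite subdirect irreducibility. Since $\Typ S$ is generated, as a monoid, by the lower interval $\Int S$ (by \cite[Corollary~4-1.4]{WBIS}), non-primeness of $\Typ S$ should already be witnessed inside $\Int S$: there are nonzero $\typ(a),\typ(b)$ with no common nonzero lower bound in $\Typ S$, and one can pull these back to idempotents $a,b\in\Idp S$ (using that $\Int S$ consists of $\scD$-classes, and every $\scD$-class meets $\Idp S$). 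So the real content is: two idempotents whose types are ``orthogonal'' in the sense of Lemma~\ref{L:OrthRefMon} give rise to two congruences that separate as required.

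The natural congruences to use are the ideal-induced congruences $\bgq_I$. First I would form the additive ideals $I_a$ and $I_b$ generated by $a$ and by $b$ respectively (the additive ideal generated by an idempotent $e$ being $\setm{x\in S}{\rr(x)\lep e \text{ and } \dd(x)\lep e}$, or equivalently the set of $x$ with $\typ(x)\lep N\typ(e)$ for some $N$ — I'll use whichever description \cite{WBIS} provides). Then $\bgq_{I_a}$ and $\bgq_{I_b}$ are bias congruences, and by Lemma~\ref{L:thetaIcapJ} their intersection is $\bgq_{I_a\cap I_b}$. The key claim is that $I_a\cap I_b=\set{0}$, which would give $\bgq_{I_a}\cap\bgq_{I_b}$ equal to the identity congruence. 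Here is where orthogonality enters: if $x\in I_a\cap I_b$ with $x\neq 0$, then $\typ(x)$ is a nonzero element of $\Typ S$ lying below a multiple of $\typ(a)$ and below a multiple of $\typ(b)$; by part~(ii) of Lemma~\ref{L:OrthRefMon}, orthogonality of $\typ(a)$ and $\typ(b)$ is inherited by their multiples, so $\typ(x)=0$, whence $x=0$ since $\Typ S$ is conical and the type map is injective on $\Int S$. Finally I must check that $\bgq_{I_a}$ and $\bgq_{I_b}$ are each \emph{nonzero}: this holds because $a\neq 0$ lies in $I_a$, so $(a,0)\in\bgq_{I_a}\setminus\Delta_S$, and symmetrically for $b$.

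Putting it together: assuming $\Typ S$ is not prime yields two nonzero idempotents $a,b$ with $\typ(a),\typ(b)$ orthogonal, hence two nonzero congruences $\bgq_{I_a},\bgq_{I_b}$ with $\bgq_{I_a}\cap\bgq_{I_b}=\bgq_{\set 0}=\Delta_S$, contradicting finite subdirect irreducibility of $S$. The main obstacle I anticipate is the bookkeeping in the very first reduction step: I need to be careful that non-primeness, stated for $\Typ S$, really does descend to a pair of elements of $\Int S$ — a priori $\typ(a)$ and $\typ(b)$ might be orthogonal in $\Typ S$ only after passing to sums, and one wants a clean statement (probably: if $x,y$ are nonzero in the conical refinement monoid $\Typ S$ with no common nonzero lower bound, then one may already take $x,y\in\Int S$, using that $\Int S$ generates $\Typ S$ and that every element of $\Int S$ is a finite sum — via refinement — of things dominated by generators, together with Lemma~\ref{L:OrthRefMon}(i) to combine the pieces). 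The rest — $\bgq_I$ being a bias congruence, $I_a\cap I_b=\set 0$ forcing the intersection of congruences to be trivial, and $\bgq_{I_a}\neq\Delta_S$ — is routine given the cited results from \cite{WBIS} and Lemmas~\ref{L:thetaIcapJ} and~\ref{L:OrthRefMon}.
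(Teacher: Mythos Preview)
Your proposal is correct and is essentially the contrapositive of the paper's argument: the paper argues directly that for nonzero idempotents~$a,b$, finite subdirect irreducibility forces $\bgq_{I(a)}\cap\bgq_{I(b)}\neq\Delta_S$, hence (Lemma~\ref{L:thetaIcapJ}) $I(a)\cap I(b)\neq\set{0}$, hence some nonzero $\bc\lep n\typ(a),n\typ(b)$, hence by Lemma~\ref{L:OrthRefMon} a nonzero common lower bound; you run the same chain backwards. Your anticipated ``main obstacle'' is in fact harmless: if nonzero $\ga,\gb\in\Typ S$ are orthogonal, pick any nonzero $\typ(a)\lep\ga$ and $\typ(b)\lep\gb$ (possible since~$\Int S$ is a generating lower interval of the conical monoid~$\Typ S$) and orthogonality is inherited downward---no appeal to Lemma~\ref{L:OrthRefMon}(i) is needed there.
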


\begin{proof}
It suffices to prove that for any nonzero idempotent elements~$a$ and~$b$ of~$S$, there exists a nonzero element of~$\Typ S$ below~$\typ_S(a)$ and~$\typ_S(b)$.
Denote by~$I(x)$ the additive ideal of~$S$ generated by~$\set{x}$, for any $x\in\nobreak S$.
Then~$\bgq_{I(a)}$ and~$\bgq_{I(b)}$ are both nonzero congruences of~$S$, thus, since~$S$ is finitely subdirectly irreducible, the intersection $\bgq_{I(a)}\cap\bgq_{I(b)}$ is a nonzero congruence of~$S$.
By Lemma~\ref{L:thetaIcapJ}, we get $I(a)\cap I(b)\neq\set{0}$.
The subsets
 \begin{align*}
 \bI_a&=\setm{\bx\in\Typ S}{(\exists n\in\NN)
 (\bx\lep n\cdot\typ_S(a))}\,,\\
 \bI_b&=\setm{\bx\in\Typ S}{(\exists n\in\NN)
 (\bx\lep n\cdot\typ_S(b))}
 \end{align*}
of~$\Typ S$ are both o-ideals of~$\Typ S$.
By \cite[Proposition~4-2.4]{WBIS}, the subsets
 \begin{align*}
 J(a)&=\setm{x\in S}{\typ_S(x)\in\bI_a}\,,\\
 J(b)&=\setm{x\in S}{\typ_S(x)\in\bI_b}
 \end{align*}
are both additive ideals of~$S$.
Since $a\in J(a)$ and $b\in J(b)$, it follows that $I(a)\subseteq J(a)$ and $I(b)\subseteq J(b)$.
(\emph{Actually, with a small additional effort, it is not hard to get $I(a)=J(a)$ and $I(b)=J(b)$.})
Hence, $J(a)\cap J(b)\neq\set{0}$, and hence there are a positive integer~$n$ and $\bc\in(\Typ S)\setminus\set{0}$ such that $\bc\lep n\typ_S(a),n\typ_S(b)$.
Since~$\Typ S$ is a conical refinement monoid, it has, by Lemma~\ref{L:OrthRefMon}, a nonzero element below $\typ_S(a)$ and $\typ_S(b)$.
\end{proof}

\section{Generators for varieties of biases}
\label{S:GenVarBias}

The main aim of this section is to prove that every proper variety of biases is generated by biases of generalized rook matrices, of finite order, over groups with zero (Theorem~\ref{T:VarMnG}).

Recall (see, for example, \cite{WBIS}) that the \emph{index} of an element~$e$ in a conical commutative monoid~$M$ is the least nonnegative integer~$n$, if it exists, such that $(n+1)x\lep e$ implies that $x=0$ whenever $x\in M$, and~$\infty$ otherwise.

\begin{lemma}\label{L:FinInd2Z}
Let~$M$ be a nonzero conical refinement monoid.
If~$M$ is prime and every element of~$M$ has finite index, then $M\cong\ZZ^+$.
\end{lemma}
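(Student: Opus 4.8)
The plan is to show that a nonzero prime conical refinement monoid $M$ in which every element has finite index must be isomorphic to $\ZZ^+$, and the natural strategy is to produce a smallest nonzero element and show everything is a finite multiple of it. First I would exploit primality: since $M\setminus\set{0}$ is downward directed, any two nonzero elements have a common nonzero lower bound (in the algebraic preorder $\lep$). The key structural input I would try to extract is that $M$ is in fact \emph{totally ordered} by $\lep$, or at least that its nonzero part behaves like an initial segment of $\ZZ^+$. To get there I would use the refinement property together with primality: given $x,y\in M$, I would like to show $x\lep y$ or $y\lep x$. One standard device is to consider $x$ and $y$ inside $x+y$ and apply refinement to the trivial identity $x+y=y+x$, obtaining a common decomposition; primality should force one of the "cross terms" to vanish, which is exactly what separates the $\ZZ^+$ case from, say, the Boolean algebra case (where the meet exists but the monoid is not linearly ordered).

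Next I would locate a minimal nonzero element. Here finiteness of the index is crucial, since it rules out divisible-type phenomena. Concretely, fix any nonzero $e\in M$; because $e$ has finite index $n$, there is a bound on how "small" a nonzero element below $e$ can be, in the sense that $(n+1)x\lep e$ forces $x=0$. I would argue that among nonzero elements $\lep e$ there is a minimal one $u$: if not, one could build an infinite strictly descending chain $e\gep x_1\gep x_2\gep\cdots$ of nonzero elements, and using primality and refinement I would hope to contradict the finite index of $e$ (each proper step "uses up" a summand, and $(n+1)$ such steps would exhibit $(n+1)$-fold room inside $e$). Then primality again promotes minimality to being an \emph{atom} of $M$ in the strong sense that $x\lep u$ implies $x=0$ or $x=u$, and moreover that $u\lep x$ for \emph{every} nonzero $x$ (since $u$ and $x$ have a common nonzero lower bound, which must equal $u$ by minimality, whence $u\lep x$). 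So $u$ is the least nonzero element of $M$, and it is cancellable because refinement monoids with such an atom peel it off uniquely.

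Finally I would show the map $\ZZ^+\to M$, $k\mapsto ku$, is an isomorphism. It is a monoid homomorphism; it is injective because $M$ is conical and $u$ is cancellable (if $ku=lu$ with $k<l$ then $(l-k)u=0$, forcing $l=k$ by conicality); and it is surjective because every $x\in M$ satisfies $x=mu$ for some finite $m$ — indeed write $x\gep u$, peel off $u$ to get $x=u+x'$ with $x'\lep x$, and iterate; this process terminates in finitely many steps since otherwise $x$ would contain arbitrarily large multiples of $u$, and $x\gep (n+1)u'$ for a suitable nonzero $u'$ would violate... more precisely, the descending chain $x\gep x'\gep x''\gep\cdots$ of "remainders" must be eventually $0$, and the obstruction to termination is again bounded by the index of $x$. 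The main obstacle I expect is the linearity/termination argument: converting "every element has finite index" plus primality into the statement that one cannot have infinite descending chains, and that the atom $u$ divides everything finitely. This is where refinement has to be used carefully, in tandem with Lemma~\ref{L:OrthRefMon}-style orthogonality bookkeeping, rather than just formally.
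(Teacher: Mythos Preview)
Your overall strategy---find an atom, show primality makes it the unique least nonzero element, then peel it off repeatedly using finite index---is exactly the paper's, and the second and third stages of your sketch would go through essentially as written. Two remarks:

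First, the detour through ``$M$ is totally ordered'' is both unnecessary and, as stated, wrong. Applying refinement to $x+y=y+x$ does not force any cross term to vanish; primality says any two nonzero elements have a common nonzero \emph{lower bound}, which is the opposite of forcing something to be zero. Fortunately you never use linearity afterwards, so this can simply be dropped.

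Second, your atom-existence argument via an infinite strictly descending chain below~$e$ does work (write $e=x_0$, $x_i=x_{i+1}+d_i$ with each $d_i\neq0$, take a common nonzero lower bound $y$ of $d_0,\dots,d_n$ by iterated primality, and get $(n+1)y\lep e$), but the paper's version is cleaner: if some $a$ has no atom below it, then any nonzero $x\lep a$ splits as $x_0+x_1$ with both parts nonzero, primality gives $y\neq0$ with $y\lep x_0,x_1$, hence $2y\lep x$; iterating yields $2^k z\lep a$ for arbitrarily large~$k$, contradicting the finite index of~$a$. This ``doubling'' avoids having to invoke a common lower bound of many elements at once. After that the paper proceeds exactly as you do: the atom~$p$ is unique by primality, every nonzero element dominates~$p$, and writing $a=mp+b$ with $m$ the index of~$a$ forces $b=0$; injectivity of $k\mapsto kp$ is the standard fact for an atom in a conical refinement monoid.
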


\begin{proof}
We claim that every element of $M\setminus\set{0}$ is above an atom of~$M$.
Suppose otherwise and let $a\in M$ without any atom below~$a$.
Let $x\in(0,a]$.
Since there is no atom below~$x$, $x$ is not an atom, thus, since $x\neq0$, there are $x_0,x_1\in M\setminus\set{0}$ such that $x=x_0+x_1$.
Since~$M$ is prime, there is $y\neq0$ such that $y\lep x_0$ and $y\lep x_1$.
It follows that $2y\lep x$.
Arguing inductively, we find $n\in\ZZ^+$ and $z\in(0,a]$ such that~$2^n$ is greater than the index of~$a$ and $2^nz\lep a$, in contradiction with the definition of the index.
This proves our claim.

In particular, there is at least one atom~$p$ in~$M$.
Since~$M$ is prime, $p$ is the only atom in~$M$.
By the claim above, every element of~$M\setminus\set{0}$ is larger than or equal to~$p$.

Now let $a\in M$.
Since $p\lep x$ whenever $x\in(0,a]$, the index~$m$ of~$a$ is the largest integer such that $mp\lep a$.
Let~$b$ such that $mp+b=a$.
If $b\neq0$, then $p\lep b$, thus $(m+1)p\lep mp+b=a$, in contradiction with the definition of the index.
Therefore, $mp=a$, thus completing the proof that $M=\ZZ^+p$.
Since~$M$ is a conical refinement monoid and~$p$ is an atom, it follows that the map ($\ZZ^+\to M$, $n\mapsto np$) is one-to-one.
Therefore, $M\cong\ZZ^+$.
\end{proof}

\begin{notation}\label{Not:VarBias}
We denote by~$\Bis$ the variety of all biases.
\end{notation}

Recall that a partially ordered Abelian group~$G$ is \emph{Archimedean} if for all $a,b\in G$, if $na\leq b$ for every $n\in\ZZ^+$, then $a\leq0$.

\begin{theorem}\label{T:VarMnG}
Let~$\cV$ be a proper variety of biases.
\begin{enumerater}
\item\label{BVbdedInd}
There is a largest nonnegative integer~$h$ such that $\fI_h\in\cV$.
Furthermore, $h$ is the largest possible value of the index of~$\typ_S(e)$ within~$\Typ S$, for $S\in\cV$ and  $e\in\Idp{S}$.

\item\label{BVdimgrp}
For any $S\in\cV$, every element of the type monoid~$\Typ S$ has finite index.
In particular, $\Typ S$ is the positive cone of an Archimedean dimension group.

\item\label{GenVarBias}
The variety~$\cV$ is generated by the collection of all its fully group-matricial members.

\end{enumerater}
\end{theorem}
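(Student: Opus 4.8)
I would prove the three assertions in the order \ref{BVbdedInd}, \ref{BVdimgrp}, \ref{GenVarBias}, each feeding into the next. \emph{Assertion~\ref{BVbdedInd}.} The set $\setm{n\in\ZZ^+}{\fI_n\in\cV}$ contains~$0$ and is an initial segment of~$\ZZ^+$: for $n\geq1$ the bias~$\fI_{n-1}$ is isomorphic to the corner $\id_{[n-1]}\,\fI_n\,\id_{[n-1]}$, a sub-bias of~$\fI_n$. It is a \emph{proper} initial segment, since otherwise Theorem~\ref{T:BISdecid} would force $\cV=\Bis$; hence it equals $\set{0,1,\dots,h}$ for a unique $h\in\ZZ^+$. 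One inequality in the ``furthermore'' part is clear: by Lemma~\ref{L:Typ=Z}, $\Typ\fI_h\cong\ZZ^+$ with $\typ(1)=h$, and the index of~$h$ in~$\ZZ^+$ equals~$h$. For the converse, suppose $S\in\cV$, $e\in\Idp S$, and the index of~$\typ_S(e)$ in~$\Typ S$ is at least~$m$; I claim $m\leq h$. By definition of the index there is a nonzero $x\in\Typ S$ with $\typ_S(e)=mx+z$ for some~$z$. Since $x,z\lep\typ_S(e)\in\Int S$ and~$\Int S$ is a lower interval of~$\Typ S$ \pup{\cite[Corollary~4-1.4]{WBIS}}, both~$x$ and~$z$ lie in~$\Int S$; write $x=\typ_S(f)$, $z=\typ_S(g)$ with $f,g\in\Idp S$, $f\neq0$. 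Iterating the realization statement of \cite[Lemma~4-1.6]{WBIS} on $\typ_S(e)=m\,\typ_S(f)+\typ_S(g)$ and replacing the realized elements by their~$\dd$'s, I obtain pairwise orthogonal idempotents $f_1,\dots,f_m\leq e$ with each $f_i\scD f$. Put $e'=f_1\oplus\cdots\oplus f_m\leq e$; by Lemma~\ref{L:Mne1Se1}, $e'Se'\cong\Matp{m}{f_1Sf_1}$, which, being a corner of~$S$, lies in~$\cV$. As~$f_1Sf_1$ contains the nonzero idempotent~$f_1$, the two-element bias~$\two\cong\pz{\set{1}}$ embeds into it, so $\fI_m\cong\Matp{m}{\two}$ embeds into $\Matp{m}{f_1Sf_1}\in\cV$; hence $\fI_m\in\cV$ and $m\leq h$.

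\emph{Assertion~\ref{BVdimgrp}.} By \cite[Corollary~4-1.4]{WBIS}, every element of~$\Typ S$ is a finite sum of elements of~$\Int S$, and every element of~$\Int S$ is of the form~$\typ_S(f)$ for an idempotent~$f$, hence has index at most~$h$ by Assertion~\ref{BVbdedInd}. The key step is then a purely monoid-theoretic lemma: in a conical refinement monoid the index is subadditive, $\mathrm{ind}(a+b)\leq\mathrm{ind}(a)+\mathrm{ind}(b)$, which I would establish by applying Riesz refinement to an equation $a+b=(n+1)x+w$ and a counting argument on the resulting rectangular array. It follows that every element of~$\Typ S$ has finite index. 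Finally, a conical refinement monoid in which every element has finite index is cancellative and unperforated, and it satisfies the Archimedean condition \pup{$na\lep b$ for all~$n$ implies $a=0$}; therefore~$\Typ S$ is the positive cone of an Archimedean dimension group, namely of its Grothendieck group.

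\emph{Assertion~\ref{GenVarBias}.} By Birkhoff's subdirect representation theorem, $\cV$ is generated by its subdirectly irreducible members, so it suffices to prove that every nontrivial subdirectly irreducible $S\in\cV$ lies in the variety generated by the fully group-matricial members of~$\cV$. For such~$S$, Lemma~\ref{L:1/2SIbias} gives that~$\Typ S$ is prime, Assertion~\ref{BVdimgrp} that every element of~$\Typ S$ has finite index; hence $\Typ S\cong\ZZ^+$ by Lemma~\ref{L:FinInd2Z}. Now~$S$ is the directed union of its corners~$eSe$, for $e\in\Idp S$ \pup{each $s\in S$ lies in $eSe$ for $e=\rr(s)\vee\dd(s)$}. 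Each~$eSe$ is a unital bias whose idempotents form a \emph{finite} Boolean algebra: inside $\Typ S\cong\ZZ^+$, every nonzero idempotent below~$e$ of type $\geq2$ splits by \cite[Lemma~4-1.6]{WBIS}, so the atoms below~$e$ all have type~$1$, there are at most $\typ_S(e)$ of them, and the algebra is atomic. By Proposition~\ref{P:BISFinIdp}, $eSe$ is a finite product of fully group-matricial biases, each of which, being a quotient of $eSe\in\cV$, is a fully group-matricial member of~$\cV$. Since varieties are closed under directed unions of subalgebras, $S$ belongs to the variety generated by the fully group-matricial members of~$\cV$, and hence so does~$\cV$.

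\emph{Expected main obstacle.} The technical heart is Assertion~\ref{BVdimgrp}: the subadditivity of the index in conical refinement monoids, together with the passage from ``all indices finite'' to ``positive cone of an Archimedean dimension group''. Assertions~\ref{BVbdedInd} and~\ref{GenVarBias} then amount to bookkeeping with corner sub-biases, homogeneous sequences \pup{Lemma~\ref{L:Mne1Se1}}, Proposition~\ref{P:BISFinIdp}, and elementary properties of the type monoid.
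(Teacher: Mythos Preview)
Your arguments for parts~\ref{BVbdedInd} and~\ref{BVdimgrp} are correct and match the paper's; for~\ref{BVdimgrp} the paper simply cites \cite[Corollary~3.12 and Proposition~3.13]{WDim} for the two monoid-theoretic facts (subadditivity of the index in conical refinement monoids, and the passage from ``all indices finite'' to ``positive cone of an Archimedean dimension group'') that you propose to reprove directly.

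For part~\ref{GenVarBias} your argument is correct but takes a different route. The paper first passes to unital members (writing each~$S$ as the directed union of its corners~$eSe$), \emph{then} to unital subdirectly irreducible members, and finishes via Lemma~\ref{L:Typ=Z}: a unital bias with $(\Typ S,\typ(1))\cong(\ZZ^+,n)$ is already isomorphic to~$\Matp{n}{\pz{G}}$. You instead take subdirectly irreducible members first, obtain $\Typ S\cong\ZZ^+$, and then pass to corners and invoke Proposition~\ref{P:BISFinIdp}. Both orderings work; the paper's buys a shorter endgame, since a unital subdirectly irreducible member is \emph{itself} fully group-matricial rather than merely lying in the variety generated by such. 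In fact you overlook a shortcut available within your own ordering: by part~\ref{BVbdedInd}, every idempotent of your subdirectly irreducible~$S$ has type at most~$h$ in $\Typ S\cong\ZZ^+$, so the type interval~$\Int S$ is bounded, whence~$S$ is already unital and Lemma~\ref{L:Typ=Z} applies directly. Even without this, in your argument each corner~$eSe$ is itself fully group-matricial, not merely a finite product of such: the atoms of~$\Idp(eSe)$ all have type~$1$, hence are pairwise $\scD$-equivalent, so Lemma~\ref{L:Mne1Se1} gives $eSe\cong\Matp{\typ_S(e)}{\pz{G}}$ without the detour through Proposition~\ref{P:BISFinIdp}.
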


\begin{proof}
\emph{Ad}~\eqref{BVbdedInd}, \eqref{BVdimgrp}.
If~$\fI_n\in\cV$ for all $n\in\ZZ^+$, then, by
Theorem~\ref{T:BISdecid}, $\cV=\Bis$, which contradicts our assumption.
Hence there is a largest nonnegative integer~$h$ such that $\fI_h\in\cV$.
Moreover, either $h=0$ and $\Typ\fI_h=\set{0}$, or $h>0$ and $(\Typ\fI_h,\typ_{\fI_h}(1))\cong(\ZZ^+,h)$.
It follows that the index of~$\typ_{\fI_h}(1)$ in~$\Typ\fI_h$ is exactly~$h$.

Let~$S\in\cV$.
We shall prove that the index of $\be\eqdef\typ_S(e)$ is less than or equal to~$h$, for every $e\in\Idp{S}$.
Suppose otherwise.
By the definition of the index and by \cite[Lemma~4-1.6]{WBIS}, there are nonzero pairwise orthogonal idempotents $e_0,\dots,e_h\leq e$ such that $\typ_S(e_0)=\typ_S(e_i)$ whenever $0\leq i\leq h$.
By definition, $(e_0,\dots,e_h)$ is a homogeneous sequence of~$S$.
Set $e'=\bigoplus_{i=0}^he_i$.
By Lemma~\ref{L:Mne1Se1}, $\Matp{h+1}{e_0Se_0}$ is isomorphic to~$e'Se'$, hence it embeds into~$S$.
Since~$\fI_{h+1}$ embeds into~$\Matp{h+1}{e_0Se_0}$, it also belongs to~$\cV$, a contradiction.

Since every element of the type interval~$\Int S$ of~$S$ has finite index in~$\Typ S$, and since~$\Int S$ generates the refinement monoid~$\Typ S$, it follows from \cite[Corollary~3.12]{WDim} that every element of~$\Typ S$ has finite index.
By \cite[Proposition~3.13]{WDim}, it follows that~$\Typ S$ is the positive cone of an Archimedean dimension group.

\emph{Ad}~\eqref{GenVarBias}.
Denote by~$\cK$ the class of all fully group-matricial members of~$\cV$.
Every member~$S$ of~$\cV$ is the directed union, thus the direct limit, of all unital biases $eSe$, where $e\in\Idp S$; thus the unital members of~$\cV$ generate~$\cV$.
Further, every unital member~$S$ of~$\cV$ is a subdirect product of subdirectly irreducible members of~$\cV$, which are all homomorphic images of~$S$, thus they are all unital.
Hence, $\cV$ is generated by the class of its unital subdirectly irreducible members, so it suffices to prove that every unital subdirectly irreducible member~$S$ of~$\cV$ belongs to~$\cK$.

By Lemma~\ref{L:1/2SIbias}, $\Typ S$ is a prime conical refinement monoid.
Further, by~\eqref{BVdimgrp} above, every element of~$\Typ S$ has finite index in~$\Typ S$.
By Lemma~\ref{L:FinInd2Z}, it follows that $\Typ S\cong\ZZ^+$.
Identifying~$\Typ S$ with~$\ZZ^+$ and setting $n=\typ_S(1)$, it follows from Lemma~\ref{L:Typ=Z} that $S\cong\Matp{n}{\pz{G}}$ for some group~$G$.
\end{proof}

\section{Generalized rook matrices and wreath products of groups}
\label{S:GRMgp}

The main aim of this section is to relate embedding properties of fully group-matricial biases and embedding properties of the corresponding groups (Lemma~\ref{L:GBEmbed2}).
Owing to Lemma~\ref{L:InvMnG}, the latter will be stated in terms of wreath products by finite symmetric groups.

The proof of the following lemma is a straightforward application of \cite[Section~3.5]{WBIS} together with the equivalence between bias homomorphism and additive semigroup homomorphism, and we leave it to the reader.

\begin{lemma}\label{L:MnHom}
Let~$S$ and~$T$ be Boolean inverse semigroups and let~$n$ be a positive integer.
Then for every bias homomorphism $f\colon S\to T$, the assignment $\Matp{n}{f}\colon(x_{i,j})_{(i,j)\in[n]\times[n]}\mapsto\pI{f(x_{i,j})}_{(i,j)\in[n]\times[n]}$ defines a bias homomorphism\linebreak $\Matp{n}{f}\colon\Matp{n}{S}\to\Matp{n}{T}$.
Furthermore, $\Matp{n}{f}$ is one-to-one \pup{resp., surjective} if{f}~$f$ is one-to-one \pup{resp., surjective}.
\end{lemma}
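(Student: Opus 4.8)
The plan is to verify directly that $\Matp{n}{f}$ is a well-defined additive semigroup homomorphism, and then to check the injectivity/surjectivity equivalences, both essentially by entrywise computation on generalized rook matrices.

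First I would recall from \cite[Section~3.5]{WBIS} the explicit formulas for the product and the inverse in $\Matp{n}{S}$: if $x=(x_{i,j})$ and $y=(y_{i,j})$ are generalized rook matrices, then $(xy)_{i,k}=\bigoplus_{j\in[n]}x_{i,j}y_{j,k}$ (the defining orthogonality conditions guarantee that for fixed $i,k$ the terms $x_{i,j}y_{j,k}$ are pairwise orthogonal, so this join exists), and $(x^{-1})_{i,j}=x_{j,i}^{-1}$. The first point to check is that $\pI{f(x_{i,j})}_{(i,j)}$ is again a generalized rook matrix over $T$: this is immediate because $f$ is a semigroup homomorphism sending $0$ to $0$, so from $x_{i,j}^{-1}x_{i,k}=x_{j,i}x_{k,i}^{-1}=0$ for $j\neq k$ one gets $f(x_{i,j})^{-1}f(x_{i,k})=f(x_{i,j}^{-1}x_{i,k})=f(0)=0$, and similarly for the other equation. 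Next, that $\Matp{n}{f}$ preserves multiplication follows from additivity of $f$: $\bigl(\Matp{n}{f}(xy)\bigr)_{i,k}=f\bigl(\bigoplus_j x_{i,j}y_{j,k}\bigr)=\bigoplus_j f(x_{i,j})f(y_{j,k})=\bigl(\Matp{n}{f}(x)\Matp{n}{f}(y)\bigr)_{i,k}$, using $f(x\oplus y)=f(x)\oplus f(y)$ for orthogonal $x,y$ together with the fact, noted in the excerpt, that a semigroup-with-zero homomorphism is a bias homomorphism iff it is additive. Preservation of $0$, of inversion, and hence of $\sdf$ and $\spl$ then follows routinely, so $\Matp{n}{f}$ is a bias homomorphism; alternatively one checks additivity of $\Matp{n}{f}$ directly (orthogonal joins in $\Matp{n}{S}$ are computed entrywise by Proposition~\ref{P:IdpMnS} and the formulas for $\dd$, $\rr$ on rook matrices) and invokes the same characterization.

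For the injectivity equivalence: if $f$ is one-to-one and $\Matp{n}{f}(x)=\Matp{n}{f}(y)$, then $f(x_{i,j})=f(y_{i,j})$ for all $i,j$, hence $x_{i,j}=y_{i,j}$ for all $i,j$, so $x=y$; conversely, if $\Matp{n}{f}$ is one-to-one and $f(a)=f(b)$, apply $\Matp{n}{f}$ to the matrices $a_{(1,1)}$ and $b_{(1,1)}$ (using the $x_{(i,j)}$ notation from the excerpt) to conclude $a_{(1,1)}=b_{(1,1)}$ and hence $a=b$. For surjectivity: if $f$ is onto and $z=(z_{i,j})\in\Matp{n}{T}$, choose $x_{i,j}\in S$ with $f(x_{i,j})=z_{i,j}$; the only subtlety is that the tuple $(x_{i,j})$ need not satisfy the rook conditions, but this is fixed by the standard trick of replacing $x_{i,j}$ by $x_{i,j}'=x_{i,j}\sd\bigl(\text{the part forced to vanish}\bigr)$ — concretely, one uses that $f(x_{i,j}^{-1}x_{i,k})=0$ forces $x_{i,j}^{-1}x_{i,k}\in\ker$-ideal considerations, or more simply observes that after replacing the row $i$ entries by their orthogonalizations one gets a genuine rook matrix mapping to $z$; conversely, if $\Matp{n}{f}$ is onto, then restricting to the $(1,1)$-corner (the sub-bias $e_{(1,1)}\Matp{n}{S}e_{(1,1)}\cong S$) shows $f$ is onto.

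The main obstacle I anticipate is the surjectivity direction: producing a genuine generalized rook matrix over $S$ lifting a given one over $T$, since an arbitrary entrywise lift need not satisfy the orthogonality constraints. The clean way is to note that $\Matp{n}{f}$ factors through $\Matp{n}{S}/\ker f^{\oplus}$ and identify the image, or to argue that one may first reduce $z$ to a diagonal-supported situation via the rook structure; but for a lemma of this routine character the intended proof is surely the entrywise computation with the orthogonalization fix, and indeed the paper explicitly leaves it to the reader. I would therefore present the multiplicative/additive check and the two bijectivity equivalences at the level of detail above, flagging the orthogonalization step as the one place where the rook-matrix axioms are genuinely used.
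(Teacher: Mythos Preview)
Your approach is exactly what the paper intends: it explicitly leaves the proof to the reader, pointing to \cite[Section~3.5]{WBIS} for the rook-matrix formulas and to the equivalence between additive and bias homomorphisms, which are precisely the two ingredients you invoke. Your identification of the surjectivity lift as the only nontrivial step is apt; just make sure the orthogonalization is carried out for both rows \emph{and} columns (first replace each $x_{i,j}$ by $\bigl(\rr(x_{i,j})\sd\bigvee_{k<j}\rr(x_{i,k})\bigr)\,x_{i,j}$ to disjoint the ranges within each row, then symmetrically on the right to disjoint the domains within each column), using that a bias homomorphism preserves $\sd$ on idempotents so each modified entry still maps to~$z_{i,j}$.
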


\begin{lemma}\label{L:CongMnS}
Let~$S$ be a Boolean inverse semigroup and let~$n$ be a positive integer.
Then every additive congruence~$\bga$ of~$S$ gives rise to a unique additive congruence~$\Matp{n}{\bga}$ of~$\Matp{n}{S}$ such that
 \begin{equation}\label{Eq:DefnMnbga}
 (x_{i,j})_{(i,j)\in[n]\times[n]}\equiv_{\Matp{n}{\bga}}
 (y_{i,j})_{(i,j)\in[n]\times[n]}
 \ \Leftrightarrow\ \pI{x_{i,j}\equiv_{\bga}y_{i,j}
 \text{ for all }(i,j)\in[n]\times[n]}\,,
 \end{equation}
for all $(x_{i,j})_{(i,j)\in[n]\times[n]},(y_{i,j})_{(i,j)\in[n]\times[n]}\in\Matp{n}{S}$.
Furthermore, the canonical surjective homomorphism $\Matp{n}{\ga}\colon\Matp{n}{S}\twoheadrightarrow\Matp{n}{S/{\bga}}$ factors through a unique isomorphism $\Matp{n}{S}/{\Matp{n}{\bga}}\to\Matp{n}{S/{\bga}}$.
Conversely, every additive congruence of~$\Matp{n}{S}$ has the form~$\Matp{n}{\bga}$ for a unique additive congruence~$\bga$ of~$S$.
 
In particular, the assignment $\bga\mapsto\Matp{n}{\bga}$ defines an isomorphism from $\Con S$ onto $\Con\Matp{n}{S}$.
\end{lemma}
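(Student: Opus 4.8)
The plan is to deduce the whole statement from Lemma~\ref{L:MnHom} together with one elementary computation with single-entry matrices inside $\Matp{n}{S}$. For the existence of $\Matp{n}{\bga}$, given an additive congruence $\bga$ of $S$ I would apply Lemma~\ref{L:MnHom} to the canonical projection $\ga\colon S\twoheadrightarrow S/{\bga}$: then $\Matp{n}{\ga}\colon\Matp{n}{S}\twoheadrightarrow\Matp{n}{S/{\bga}}$ is a surjective bias homomorphism, so its kernel is an additive congruence of $\Matp{n}{S}$, and by the very definition of $\Matp{n}{\ga}$ this kernel relates $(x_{i,j})$ to $(y_{i,j})$ precisely when $x_{i,j}\equiv_{\bga}y_{i,j}$ for all $i,j$. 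I would take $\Matp{n}{\bga}$ to be that kernel; since~\eqref{Eq:DefnMnbga} determines the relation completely, uniqueness is automatic, and the asserted factorization through an isomorphism $\Matp{n}{S}/{\Matp{n}{\bga}}\to\Matp{n}{S/{\bga}}$ is exactly the homomorphism theorem applied to $\Matp{n}{\ga}$.

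The computational ingredient is that, for $a,b\in\Idp S$, indices $p,q,i,j\in[n]$ and $X=(x_{k,l})\in\Matp{n}{S}$, a direct evaluation of the matrix product yields
 \[
 (a)_{(p,i)}\cdot X\cdot(b)_{(j,q)}=\bigl(a\,x_{i,j}\,b\bigr)_{(p,q)}
 \]
inside $\Matp{n}{S}$, every factor here, and the result, being a single-entry matrix over $S$ and hence automatically a generalized rook matrix. Taking $a=\rr(x_{i,j})\vee\rr(y_{i,j})$ and $b=\dd(x_{i,j})\vee\dd(y_{i,j})$ makes $a\,x_{i,j}\,b=x_{i,j}$ and $a\,y_{i,j}\,b=y_{i,j}$. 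I would also use that each $X=(x_{i,j})\in\Matp{n}{S}$ is the orthogonal join $\bigoplus_{(i,j)}(x_{i,j})_{(i,j)}$ of pairwise orthogonal single-entry matrices, and that $x\mapsto x_{(1,1)}$ is an additive semigroup embedding, hence a bias embedding, of $S$ into $\Matp{n}{S}$ (a routine verification from the description of $\Matp{n}{S}$ in \cite[Section~3.5]{WBIS}).

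To show that every additive congruence of $\Matp{n}{S}$ has the form $\Matp{n}{\bga}$ for a unique $\bga$, let $\bgb$ be such a congruence and define a relation $\bga$ on $S$ by declaring $x\equiv_{\bga}y$ if{f} $x_{(1,1)}\equiv_{\bgb}y_{(1,1)}$; being the inverse image of $\bgb$ under the bias homomorphism $x\mapsto x_{(1,1)}$, it is an additive congruence of $S$. I would then prove $\Matp{n}{\bga}=\bgb$ by double inclusion. If $X\equiv_{\bgb}Y$, applying $\bgb$ to the pair $(a)_{(1,i)}X(b)_{(j,1)}$, $(a)_{(1,i)}Y(b)_{(j,1)}$ with $a,b$ chosen as above gives $(x_{i,j})_{(1,1)}\equiv_{\bgb}(y_{i,j})_{(1,1)}$, that is, $x_{i,j}\equiv_{\bga}y_{i,j}$, for all $i,j$; hence $X\equiv_{\Matp{n}{\bga}}Y$ by~\eqref{Eq:DefnMnbga}. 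Conversely, if $x_{i,j}\equiv_{\bga}y_{i,j}$ for all $i,j$, i.e.\ $(x_{i,j})_{(1,1)}\equiv_{\bgb}(y_{i,j})_{(1,1)}$, then applying $\bgb$ to the pair $(a)_{(i,1)}(x_{i,j})_{(1,1)}(b)_{(1,j)}$, $(a)_{(i,1)}(y_{i,j})_{(1,1)}(b)_{(1,j)}$ gives $(x_{i,j})_{(i,j)}\equiv_{\bgb}(y_{i,j})_{(i,j)}$, and joining over all $(i,j)$, using additivity of $\bgb$ and the orthogonality of the summands, yields $X\equiv_{\bgb}Y$. Uniqueness of $\bga$ is forced by~\eqref{Eq:DefnMnbga}, which makes $x\equiv_{\bga}y$ equivalent to $(x)_{(1,1)}\equiv_{\Matp{n}{\bga}}(y)_{(1,1)}$.

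Finally, by what precedes together with the two uniqueness clauses, $\bga\mapsto\Matp{n}{\bga}$ is a bijection from $\Con S$ onto $\Con\Matp{n}{S}$, and~\eqref{Eq:DefnMnbga} shows at once that it and its inverse are order-preserving; hence it is a lattice isomorphism. The step needing the most care is this converse direction: one must stay inside $\Matp{n}{S}$ — which is why single-entry matrices with entries in $\Idp S$ are used, rather than permutation matrices over the Boolean unitization $\widetilde S$, whose off-diagonal ones need not lie in $\Matp{n}{S}$ — and one must use additivity of the congruence to rebuild a matrix from its entries.
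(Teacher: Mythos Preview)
Your proposal is correct and follows essentially the same route as the paper: define $\Matp{n}{\bga}$ as the kernel of $\Matp{n}{\ga}$, then for an arbitrary congruence~$\bgb$ of $\Matp{n}{S}$ pull it back to~$S$ via the corner embedding $x\mapsto x_{(1,1)}$ and verify $\bgb=\Matp{n}{\bga}$ by conjugating with single-entry idempotent matrices to move between positions $(1,1)$ and $(i,j)$. The only cosmetic difference is that the paper uses one common idempotent~$e$ dominating all entries of~$X$ and~$Y$ at once, whereas you use entry-specific idempotents $a,b$; both choices yield the same conjugation identity and the same argument.
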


\begin{proof}
The canonical projection $\ga\colon S\twoheadrightarrow S/{\bga}$ is a bias homomorphism, which, by Lemma~\ref{L:MnHom}, induces a bias homomorphism $\Matp{n}{\ga}\colon\Matp{n}{S}\twoheadrightarrow\Matp{n}{S/{\bga}}$.
The kernel~$\Matp{n}{\bga}$ of that homomorphism is an additive congruence of~$\Matp{n}{S}$, and it is given by~\eqref{Eq:DefnMnbga}.
Observe that
 \[
 x_{(i',j')}=a_{(i',i)}x_{(i,j)}a_{(j,j')}\,,
 \]
for all $i,i',j,j'\in[n]$ and all $x,a\in S$ with~$a$ idempotent and $\dd(x)\vee\rr(x)\leq a$.

Now let~$\bgq$ be an additive congruence of~$\Matp{n}{S}$.
The equivalence relation~$\bga$ on~$S$ defined by
 \[
 x\equiv_{\bga}y\ \Leftrightarrow\ 
 x_{(1,1)}\equiv_{\bgq}y_{(1,1)}\,,
 \qquad\text{for all }x,y\in S\,,
 \]
is an additive congruence of~$S$, and it follows from the above that
 \begin{equation}\label{Eq:xijiff11}
 x\equiv_{\bga}y\ \Leftrightarrow\ 
 x_{(i,j)}\equiv_{\bgq}y_{(i,j)}\,,\qquad\text{for all }x,y\in S
 \text{ and all }i,j\in[n]\,.
 \end{equation}
We claim that $\bgq=\Matp{n}{\bga}$.
Let $x=(x_{i,j})_{(i,j)\in[n]\times[n]}$ and $y=(y_{i,j})_{(i,j)\in[n]\times[n]}$ in $\Matp{n}{S}$.

Suppose first that $x\equiv_{\Matp{n}{\bga}}y$.
For all $(i,j)\in[n]\times[n]$, the relation $x_{i,j}\equiv_{\bga}y_{i,j}$ holds, that is, by definition and by~\eqref{Eq:xijiff11}, $(x_{i,j})_{(i,j)}\equiv_{\bgq}(y_{i,j})_{(i,j)}$.
Since $x=\bigoplus_{(i,j)\in[n]\times[n]}(x_{i,j})_{(i,j)}$ and similarly for~$y$, it follows that $x\equiv_{\bgq}y$.

Suppose, conversely, that $x\equiv_{\bgq}y$ and let~$e$ be an idempotent element of~$S$ such that $\bigvee_{i,j}\dd(x_{i,j})\vee\bigvee_{i,j}\rr(x_{i,j})\leq e$.
Let $i,j\in[n]$.
{}From $x\equiv_{\bgq}y$ it follows that $e_{(i,i)}xe_{(j,j)}\equiv_{\bgq}e_{(i,i)}ye_{(j,j)}$, that is, $(x_{i,j})_{(i,j)}\equiv_{\bgq}(y_{i,j})_{(i,j)}$, thus, by~\eqref{Eq:xijiff11}, $x_{i,j}\equiv_{\bga}y_{i,j}$.
Therefore, $x\equiv_{\Matp{n}{\bga}}y$, thus completing the proof of our claim.

Finally, since the map $\bga\mapsto\Matp{n}{\bga}$ is clearly one-to-one, it defines an isomorphism from~$\Con S$ onto $\Con\Matp{n}{S}$.
\end{proof}

Taking $S=\pz{G}$ for a group~$G$, we get two types of congruences in~$S$:

\begin{itemize}
\item[(1)]
The congruences of the form $\pz{\bgq}=\bgq\cup\set{(0,0)}$, for a congruence~$\bgq$ of the group~$G$;

\item[(2)]
The full congruence $\one_G=\pz{G}\times\pz{G}$.
\end{itemize}

In turn, the congruences of the group~$G$ are in one-to-one correspondence with the normal subgroups of~$G$.
Denoting by~$\NSub G$ the lattice of all normal subgroups of~$G$, we thus obtain the following corollary to Lemma~\ref{L:CongMnS}.

\begin{corollary}\label{C:CongMnS}
Let~$G$ be a group and let~$n$ be a positive integer.
Then $\Con\pz{G}\cong\Con\Matp{n}{\pz{G}}\cong(\NSub G)\sqcup\set{\infty}$.
\end{corollary}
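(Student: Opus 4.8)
The plan is to chain together the two isomorphisms already on the table. First I would recall that by Lemma~\ref{L:CongMnS}, applied to the Boolean inverse semigroup $S=\pz{G}$, the assignment $\bga\mapsto\Matp{n}{\bga}$ is an isomorphism $\Con\pz{G}\to\Con\Matp{n}{\pz{G}}$. So it remains only to identify $\Con\pz{G}$ with $(\NSub G)\sqcup\set{\infty}$, since the composite then yields the full chain of isomorphisms in the statement.

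For the description of $\Con\pz{G}$, the key step is to classify the bias congruences of $\pz{G}$. I would argue as follows. Let $\bgq$ be a bias congruence of $\pz{G}$. Since $\pz{G}$ has only two idempotents, namely $0$ and $1$, the additivity condition on bias congruences is vacuous beyond what inverse-semigroup congruences already give, so $\bgq$ is just an inverse-semigroup congruence with zero. Either $(0,1)\in\bgq$, in which case $\bgq$ collapses everything (as $g=g\cdot1\equiv_{\bgq}g\cdot0=0$ for every $g\in G$), so $\bgq=\one_G$; or $(0,1)\notin\bgq$, in which case the $\bgq$-class of $0$ is $\set{0}$ (if $g\equiv_{\bgq}0$ with $g\in G$, then $1=\dd(g)\equiv_{\bgq}\dd(0)=0$), and the restriction of $\bgq$ to $G$ is a group congruence $\bgr$, with $\bgq=\pz{\bgr}$. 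Conversely, every $\pz{\bgr}$ for a group congruence $\bgr$, together with $\one_G$, is readily checked to be a bias congruence. This establishes a bijection $\Con\pz{G}\to(\Con G)\sqcup\set{\infty}$, where the added top element corresponds to $\one_G$; one then checks it is order-preserving in both directions (the congruences $\pz{\bgr}$ are ordered exactly as the $\bgr$, and $\one_G$ sits above all of them), hence a lattice isomorphism. Finally, $\Con G\cong\NSub G$ is the standard correspondence between congruences and normal subgroups of a group, so $\Con\pz{G}\cong(\NSub G)\sqcup\set{\infty}$.

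I do not anticipate a serious obstacle here: both halves are essentially bookkeeping, one half citing Lemma~\ref{L:CongMnS} verbatim and the other half being an elementary inspection of congruences on a group-with-zero. The only point requiring a little care is confirming that the additivity clause in the definition of bias congruence (from~\cite{WBIS}) imposes nothing extra on $\pz{G}$ beyond being an inverse-semigroup congruence with zero — but this is immediate from there being no pair of nonzero orthogonal idempotents in $\pz{G}$. One should also make sure the top element $\infty$ of $(\NSub G)\sqcup\set{\infty}$ is matched with $\one_G$ and not confused with $G$ itself as a normal subgroup; the added point $\infty$ is strictly above the element of $\NSub G$ corresponding to the trivial group congruence (which is $G$ as a normal subgroup, giving the identity congruence on $G$, hence the congruence $\pz{\text{id}_G}$ on $\pz{G}$, which is still below $\one_G$).
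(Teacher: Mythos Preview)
Your proposal is correct and follows exactly the paper's approach: the paragraph preceding the corollary already records the dichotomy between congruences of the form $\pz{\bgq}$ and the full congruence $\one_G$, and then invokes Lemma~\ref{L:CongMnS} together with the standard identification $\Con G\cong\NSub G$. One small slip in your closing parenthetical: the normal subgroup~$G$ corresponds to the \emph{full} congruence $G\times G$ on~$G$ (not the identity congruence), so the top of $\NSub G$ maps to $\pz{(G\times G)}$, which is indeed strictly below~$\one_G$; this does not affect your main argument.
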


Our next series of lemmas will focus on bias homomorphisms between fully group-matricial biases.

\begin{lemma}\label{L:InvMnG}
Let~$G$ be a group and let~$n$ be a positive integer.
Then the group of all invertible elements of~$\Matp{n}{\pz{G}}$ is isomorphic to the wreath product $G\wr\fS_n$.
\end{lemma}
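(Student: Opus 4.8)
The plan is to exhibit the group~$U$ of invertible elements of $\Matp{n}{\pz{G}}$ as an internal semidirect product of two transparent subgroups, and then to recognise that semidirect product as the wreath product $G\wr\fS_n$.

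First I would determine which generalized rook matrices over~$\pz{G}$ are invertible. As the only idempotents of~$\pz{G}$ are $0$ and~$1_G$, Proposition~\ref{P:IdpMnS} shows that the identity matrix~$1$ of $\Matp{n}{\pz{G}}$ is the diagonal matrix with all diagonal entries equal to~$1_G$. If $x\in\Matp{n}{\pz{G}}$ is invertible with inverse~$y$, then reading off the $(i,i)$-entry of $xy=1$ yields an orthogonal join $\bigoplus_k x_{i,k}y_{k,i}=1_G$ in~$\pz{G}$; exactly one term of this join is nonzero, so $x_{i,k}\neq 0$ for some~$k$, and since~$x$ is a generalized rook matrix this forces each row of~$x$ to have exactly one nonzero entry, necessarily in~$G$. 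The equation $yx=1$ gives symmetrically that each column of~$x$ has exactly one nonzero entry. Thus the support of an invertible matrix is the graph of a permutation of~$[n]$, with all nonzero entries in~$G$.

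Next I introduce two subgroups. The first is the subgroup~$D$ of all diagonal matrices with all diagonal entries in~$G$, which is visibly isomorphic to~$G^{n}$. The second involves, for $\sigma\in\fS_n$, the matrix~$P_\sigma$ with support $\setm{(\sigma(j),j)}{j\in[n]}$ and all nonzero entries equal to~$1_G$; a short computation gives $P_\sigma P_\tau=P_{\sigma\tau}$, so $\sigma\mapsto P_\sigma$ embeds~$\fS_n$ into $\Matp{n}{\pz{G}}$ with image a subgroup~$P\cong\fS_n$. If~$x$ has support the graph of~$\sigma$ with entries in~$G$, then $xP_\sigma^{-1}$ is a diagonal matrix with entries in~$G$, so $x=dP_\sigma$ with $d=xP_\sigma^{-1}\in D$; in particular every such~$x$ is a product of invertible elements, hence invertible. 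Combined with the previous paragraph, this identifies~$U$ with the set of matrices just described, and exhibits every $x\in U$ in the form $dP_\sigma$ for a unique $(d,\sigma)\in D\times\fS_n$ (uniqueness being read off from the support). Hence $U=DP$ and $D\cap P=\set{1}$.

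It remains to check that~$D$ is normal in~$U$ and to identify the conjugation action. Since~$D$ is abelian and every element of~$U$ has the form $dP_\sigma$, it suffices to compute $P_\sigma dP_\sigma^{-1}$ for $d\in D$: an entrywise calculation shows that if the diagonal of~$d$ is $(a_i)_{i\in[n]}$ then the diagonal of $P_\sigma dP_\sigma^{-1}$ is $(a_{\sigma^{-1}(i)})_{i\in[n]}$. Thus~$D$ is normal in~$U$, and under the identification $D\cong G^{n}$ the subgroup $P\cong\fS_n$ acts on~$D$ by permuting coordinates; therefore $U\cong D\rtimes P\cong G^{n}\rtimes\fS_n$, which is exactly the wreath product $G\wr\fS_n$. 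The only point requiring care is the index bookkeeping in products of generalized rook matrices --- which here always collapse to a single nonzero entry --- and the choice of a convention for~$P_\sigma$ matching the standard coordinate action; there is no genuine obstacle.
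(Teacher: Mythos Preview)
Your proof is correct and takes essentially the same approach as the paper: both identify the invertible elements of $\Matp{n}{\pz{G}}$ as the monomial matrices (permutation support, nonzero entries in~$G$) and then recognise this group as $G\wr\fS_n$. The only cosmetic differences are that the paper obtains the permutation support via the bias projection $\Matp{n}{\pz{G}}\to\fI_n$ (rather than by reading off $xy=1$) and verifies the wreath product by writing out the multiplication formula explicitly, whereas you phrase the conclusion as an internal semidirect product $D\rtimes P$.
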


\begin{proof}
The group homomorphism $G\to\set{1}$ extends to an additive semigroup homomorphism, that is, a bias homomorphism, $\pz{G}\to\set{0,1}$, which, by Lemma~\ref{L:MnHom}, extends to a bias homomorphism~$\pi$ from $\Matp{n}{\pz{G}}$ to $\fI_n\cong\Matp{n}{\set{0,1}}$.
Let $x=(x_{i,j})_{(i,j)\in[n]\times[n]}$ in~$\Matp{n}{\pz{G}}$.
If~$x$ is invertible in~$\Matp{n}{\pz{G}}$, then~$\pi(x)$ is invertible in~$\fI_n$, thus it is a permutation matrix, that is, denoting by~$\fS_n$ the group of all permutations of~$[n]$,
 \[
 \pi(x)=(\delta_{i,\gs(j)})_{(i,j)\in[n]\times[n]}\,,\quad
 \text{for some }\gs\in\fS_n\,,
 \]
where~$\delta$ denotes Kronecker's symbol.
It follows that there is a finite sequence $(g_1,\dots,g_n)\in G^n$ such that
 \begin{equation}\label{Eq:deltag}
 x=(\delta_{i,\gs(j)}g_i)_{(i,j)\in[n]\times[n]}\,.
 \end{equation}
Denoting by~$[g_1,\dots,g_n;\gs]$ the right hand side of~\eqref{Eq:deltag}, the product of two such elements is given by
 \[
 [g_1,\dots,g_n;\ga]\cdot[h_1,\dots,h_n;\gb]=
 [g_1h_{\ga^{-1}(1)},\dots,g_nh_{\ga^{-1}(n)},\ga\gb]\,,
 \]
so the elements of the form~$[g_1,\dots,g_n;\gs]$ form a subgroup of the monoid~$\Matp{n}{\pz{G}}$, isomorphic to the wreath product $G\wr\fS_n$.
By the above, this subgroup contains all invertibles of~$\Matp{n}{\pz{G}}$, thus it consists exactly of all invertibles of~$\Matp{n}{\pz{G}}$.
\end{proof}

\begin{lemma}\label{L:GBEmbed}
Let~$G$ and~$H$ be groups and let~$n$ be a positive integer.
Then~$\pz{G}$ embeds, as a bias, into~$\Matp{n}{\pz{H}}$, if{f}~$G$ embeds, as a subgroup, into the wreath product $H\wr\fS_n$.
\end{lemma}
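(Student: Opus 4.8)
The plan is to prove the two implications separately; both are elementary, the only point needing a little care being that a bias embedding of $\pz{G}$ into $\Matp{n}{\pz{H}}$ need not carry the group unit $1_G$ to the identity matrix, so that in general one lands in a proper corner of $\Matp{n}{\pz{H}}$.

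For the ``if'' direction, I would start from an embedding $G\hookrightarrow H\wr\fS_n$ and use Lemma~\ref{L:InvMnG} to regard it as an injective group homomorphism $j\colon G\to\Matp{n}{\pz{H}}$ whose image consists of invertible elements. Extending $j$ by $0\mapsto 0$ gives a map $\ol j\colon\pz{G}\to\Matp{n}{\pz{H}}$ which is plainly an injective homomorphism of semigroups with zero. It then remains to observe that $\ol j$ is \emph{additive}, hence a bias homomorphism: for $g,h\in G$ one has $g^{-1}h\in G$, so $g^{-1}h\neq 0$, and thus no two elements of~$G$ are orthogonal; consequently the only orthogonal pairs in $\pz{G}$ are those having a zero entry, for which additivity of $\ol j$ is immediate. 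So $\ol j$ is a bias embedding.

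For the ``only if'' direction, I would take a bias embedding $f\colon\pz{G}\hookrightarrow\Matp{n}{\pz{H}}$ and examine $e\eqdef f(1_G)$. Since $1_G$ is idempotent, so is~$e$, so by Proposition~\ref{P:IdpMnS} it is a diagonal matrix whose entries lie in $\Idp{\pz{H}}=\set{0,1_H}$; let $A\subseteq[n]$ be the set of indices with diagonal entry $1_H$, and set $m=\card A$, noting $m\geq 1$ because $f$ is one-to-one and $1_G\neq 0$. Every $g\in G$ satisfies $\dd(g)=\rr(g)=1_G$ in $\pz{G}$, hence $\dd(f(g))=\rr(f(g))=e$, so $f(g)$ lies in the corner monoid $e\Matp{n}{\pz{H}}e$ and is invertible there, with inverse $f(g^{-1})$. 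Restriction of matrices to the $A\times A$ block identifies $e\Matp{n}{\pz{H}}e$ with $\Matp{m}{\pz{H}}$ (alternatively, apply Lemma~\ref{L:Mne1Se1} to the homogeneous sequence of diagonal atoms indexed by~$A$), so $f$ induces an embedding of $G$ into the group of invertible elements of $\Matp{m}{\pz{H}}$, which by Lemma~\ref{L:InvMnG} is isomorphic to $H\wr\fS_m$. To conclude, I would invoke the elementary fact that $H\wr\fS_m$ embeds into $H\wr\fS_n$ whenever $m\leq n$ --- via the inclusion $\fS_m\hookrightarrow\fS_n$ fixing $m+1,\dots,n$ and padding the $H$-tuples by $1_H$ --- and compose to obtain the desired embedding $G\hookrightarrow H\wr\fS_n$.

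The only real difficulty is the bookkeeping in the ``only if'' direction: because $f(1_G)$ may be a proper idempotent of $\Matp{n}{\pz{H}}$, one must pass to the corner $e\Matp{n}{\pz{H}}e\cong\Matp{m}{\pz{H}}$ with $m\leq n$ and then absorb $H\wr\fS_m$ into $H\wr\fS_n$. Everything else, including the two verifications that $\ol j$ and the induced corner map behave well, is routine.
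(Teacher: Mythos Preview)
Your proposal is correct and follows essentially the same route as the paper: use Lemma~\ref{L:InvMnG} to pass between $H\wr\fS_n$ and the invertibles of $\Matp{n}{\pz{H}}$ for the ``if'' direction, and for the ``only if'' direction observe that $f(1_G)$ is a diagonal $\set{0,1}$-matrix via Proposition~\ref{P:IdpMnS}, pass to the corresponding $m\times m$ corner $\Matp{m}{\pz{H}}$, apply Lemma~\ref{L:InvMnG} again, and finish with the padding embedding $H\wr\fS_m\hookrightarrow H\wr\fS_n$. Your write-up is slightly more explicit than the paper's in justifying additivity of $\ol{j}$ and invertibility of $f(g)$ in the corner, but the argument is the same.
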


\begin{proof}
By Lemma~\ref{L:InvMnG}, any group embedding of~$G$ into $H\wr\fS_n$ gives rise to a group embedding of~$G$ into the group of all invertible elements of~$\Matp{n}{\pz{H}}$, which in turn extends to a bias embedding from~$\pz{G}$ into~$\Matp{n}{\pz{H}}$.

Let, conversely, $\gf\colon\pz{G}\hookrightarrow\Matp{n}{\pz{H}}$ be a bias embedding.
Since~$\gf(1)$ is an idempotent element of~$\Matp{n}{\pz{H}}$, it is, by Proposition~\ref{P:IdpMnS} a diagonal matrix with entries in $\set{0,1}$.
Denote by~$\gO$ the set of all indices $i\in[n]$ such that the $(i,i)$th entry of~$\gf(1)$ is~$1$.
For every $g\in G$, the element  $\gf(g)=\gf(1)\gf(g)\gf(1)$ belongs to $\gf(1)\Matp{n}{\pz{H}}\gf(1)$, which consists of all generalized rook matrices all whose entries outside $\gO\times\gO$ are zero.
Hence, setting $m=\card\gO$, the map~$\gf$ induces a unital bias embedding $\psi\colon\pz{G}\hookrightarrow\Matp{m}{\pz{H}}$.
Since~$\psi$ sends the unit to the unit, it sends every invertible element to an invertible element.
By Lemma~\ref{L:InvMnG}, $G$ embeds, as a subgroup, into $H\wr\fS_m$.
The latter embeds, in turn, into $H\wr\fS_n$, \emph{via}
$[h_1,\dots,h_m;\gs]\mapsto[h_1,\dots,h_m,1,\dots,1;\ol{\gs}]$, where~$\ol{\gs}$ stands for the extension of~$\gs$ by the identity map on $[n]\setminus[m]$.
\end{proof}

Denote by~$\ip{x}$ the largest integer less than or equal to~$x$, for any rational number~$x$.
The following lemma strengthens Lemma~\ref{L:GBEmbed} to bias embeddings between fully group-matricial biases.

\begin{lemma}\label{L:GBEmbed2}
Let~$G$ and~$H$ be groups, let~$m$ and~$n$ be positive integers.
Then~$\Matp{m}{\pz{G}}$ embeds, as a bias, into~$\Matp{n}{\pz{H}}$, if{f} $m\leq n$ and~$G$ embeds, as a subgroup, into the wreath product $H\wr\fS_{\ip{n/m}}$.
\end{lemma}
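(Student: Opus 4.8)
The plan is to combine Lemma~\ref{L:MmnS} with Lemma~\ref{L:GBEmbed}, after reducing the statement about $\Matp{m}{\pz{G}}$ embedding into $\Matp{n}{\pz{H}}$ to a statement about a single group with zero embedding into a fully group-matricial bias. For the \emph{if} direction, suppose $m\leq n$ and $G\hookrightarrow H\wr\fS_{\ip{n/m}}$. Writing $q=\ip{n/m}$, Lemma~\ref{L:GBEmbed} gives a bias embedding $\pz{G}\hookrightarrow\Matp{q}{\pz{H}}$; applying the functor $\Matp{m}{-}$ (Lemma~\ref{L:MnHom}) yields a bias embedding $\Matp{m}{\pz{G}}\hookrightarrow\Matp{m}{\Matp{q}{\pz{H}}}\cong\Matp{mq}{\pz{H}}$ by Lemma~\ref{L:MmnS}. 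Finally, since $mq\leq n$, the bias $\Matp{mq}{\pz{H}}$ embeds into $\Matp{n}{\pz{H}}$ by padding matrices with zero rows and columns (the obvious additive-semigroup embedding $\Matp{k}{S}\hookrightarrow\Matp{n}{S}$ for $k\leq n$), so we are done.

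For the \emph{only if} direction, suppose $\varphi\colon\Matp{m}{\pz{G}}\hookrightarrow\Matp{n}{\pz{H}}$ is a bias embedding. First I would extract the inequality $m\leq n$ by computing type monoids: $\Typ\Matp{m}{\pz{G}}\cong\ZZ^+$ with $\typ(1)=m$ by Lemma~\ref{L:Typ=Z}, and the image of $1$ under $\varphi$ is an idempotent of $\Matp{n}{\pz{H}}$, hence (Proposition~\ref{P:IdpMnS}) a diagonal $0/1$ matrix of some rank $r\leq n$; since $\varphi$ restricts to a bias embedding $\Matp{m}{\pz{G}}\hookrightarrow\varphi(1)\Matp{n}{\pz{H}}\varphi(1)\cong\Matp{r}{\pz{H}}$, and embeddings are injective on idempotents, counting a maximal homogeneous sequence of atoms shows $m\leq r\leq n$. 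So we may assume $\varphi$ is unital into $\Matp{r}{\pz{H}}$, and it suffices to handle the case $r=n$, i.e.\ $\varphi$ unital.

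Now the heart of the matter: given a unital bias embedding $\varphi\colon\Matp{m}{\pz{G}}\hookrightarrow\Matp{n}{\pz{H}}$, I must produce a group embedding $G\hookrightarrow H\wr\fS_{\ip{n/m}}$. The key geometric fact is that the $m$ diagonal idempotents $a_1,\dots,a_m$ of $\Matp{m}{\pz{G}}$ form a homogeneous sequence summing to $1$, so their images $\varphi(a_1),\dots,\varphi(a_m)$ form a homogeneous sequence in $\Matp{n}{\pz{H}}$ summing to the unit; each $\varphi(a_i)$ is a diagonal $0/1$ matrix, they are pairwise ``orthogonal'' as subsets of $[n]$ with union $[n]$, and all have the same rank, call it $p$, whence $mp\leq n$ — and since $p$ is an integer this forces $p\leq\ip{n/m}$. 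By Lemma~\ref{L:Mne1Se1}, $\varphi(1)\Matp{n}{\pz{H}}\varphi(1)\cong\Matp{m}{\varphi(a_1)\Matp{n}{\pz{H}}\varphi(a_1)}$ and the latter corner is $\cong\Matp{p}{\pz{H}}$. Under this identification $\varphi$ becomes a bias embedding $\Matp{m}{\pz{G}}\hookrightarrow\Matp{m}{\Matp{p}{\pz{H}}}$ which, restricted to the ``$(1,1)$-corner'' $\pz{G}=a_1\Matp{m}{\pz{G}}a_1$, lands in the corresponding corner $\Matp{p}{\pz{H}}$; thus we get a unital bias embedding $\pz{G}\hookrightarrow\Matp{p}{\pz{H}}$. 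By Lemma~\ref{L:GBEmbed} (or directly Lemma~\ref{L:InvMnG}, since a unital embedding carries invertibles to invertibles), $G$ embeds into $H\wr\fS_p$, which embeds into $H\wr\fS_{\ip{n/m}}$ via the padding map at the end of the proof of Lemma~\ref{L:GBEmbed}.

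The main obstacle I anticipate is the bookkeeping in the last paragraph: one must check carefully that the block-decomposition isomorphism of Lemma~\ref{L:Mne1Se1}, applied to the homogeneous sequence $(\varphi(a_1),\dots,\varphi(a_m))$, is compatible with the ``outer'' matrix structure of $\Matp{m}{\pz{G}}$ coming from $(a_1,\dots,a_m)$ — i.e.\ that the composite identification sends $a_1\Matp{m}{\pz{G}}a_1$ into the right corner and is unital there. Once the choice of connecting elements $c_i$ (with $\dd(c_i)=a_1$, $\rr(c_i)=a_i$, $c_1=a_1$) and their images are set up, this is a routine but slightly tedious diagram chase; everything else is a direct assembly of the already-proved lemmas.
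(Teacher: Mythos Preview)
Your proposal is correct and follows essentially the same route as the paper: for the \emph{if} direction both arguments are identical, and for the \emph{only if} direction both push the homogeneous sequence $(a_1,\dots,a_m)$ through~$\varphi$, observe that the images are diagonal idempotents supported on pairwise disjoint subsets of~$[n]$ of common size~$d$ with $md\leq n$, and then restrict~$\varphi$ to the $(1,1)$-corner to obtain $\pz{G}\hookrightarrow\Matp{d}{\pz{H}}$ and invoke Lemma~\ref{L:GBEmbed}.

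The paper's version is slightly more direct than yours: it does \emph{not} reduce to the unital case, and it does \emph{not} invoke Lemma~\ref{L:Mne1Se1} for the converse --- the observation that $\varphi$ sends $a_1\Matp{m}{\pz{G}}a_1$ into $\varphi(a_1)\Matp{n}{\pz{H}}\varphi(a_1)\cong\Matp{d}{\pz{H}}$ is immediate from $\varphi(a_1xa_1)=\varphi(a_1)\varphi(x)\varphi(a_1)$, with no need for the full block decomposition. In particular, the ``bookkeeping obstacle'' you anticipate simply does not arise, and the inequality $m\leq n$ falls out of $md\leq n$ (with $d\geq1$ since~$\varphi$ is injective) without any appeal to type monoids.
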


\begin{proof}
Suppose, first, that $m\leq n$ and that~$G$ embeds, as a subgroup, into\linebreak $H\wr\fS_{\ip{n/m}}$.
By Lemma~\ref{L:GBEmbed}, $\pz{G}$ embeds, as a bias, into $\Matp{\ip{n/m}}{\pz{H}}$.
It follows that $\Matp{m}{\pz{G}}$ embeds, as a bias, into $\Matp{m}{\Matp{\ip{n/m}}{\pz{H}}}$, thus, by Lemma~\ref{L:MmnS}, into $\Matp{m\ip{n/m}}{\pz{H}}$.
Since $m\ip{n/m}\leq n$, it follows that $\Matp{m}{\pz{G}}$ embeds, as a bias, into $\Matp{n}{\pz{H}}$.

Let, conversely, $\gf\colon\Matp{m}{\pz{G}}\hookrightarrow\Matp{n}{\pz{H}}$ be a bias embedding.
Whenever $1\leq i\leq m$, denote by~$a_i$ the generalized rook matrix of order~$m$ with $(i,i)$th entry~$1$ and all others~$0$.
Similarly, for $X\subseteq[n]$, denote by~$b_X$ the diagonal generalized rook matrix of order~$n$ with $(j,j)$th entry~$1$ if $j\in X$, $0$ otherwise.
Then each~$\gf(a_i)$ is a nonzero idempotent element of~$\Matp{n}{\pz{H}}$, so, by Proposition~\ref{P:IdpMnS}, $\gf(a_i)=b_{X_i}$ for some nonempty $X_i\subseteq[n]$.
Since $(a_1,\dots,a_m)$ is a homogeneous sequence in $\Matp{m}{\pz{G}}$, $(b_{X_1},\dots,b_{X_m})$ is a homogeneous sequence in $\Matp{n}{\pz{H}}$, thus the~$X_i$ are pairwise disjoint and they all have the same cardinality, say~$d$.
Hence, $md\leq n$.
Moreover, $\gf$ embeds $a_1\Matp{m}{\pz{G}}a_1$, which is isomorphic to~$\pz{G}$, into $b_{X_1}\Matp{n}{\pz{H}}b_{X_1}$, which is isomorphic to $\Matp{d}{\pz{H}}$.
By Lemma~\ref{L:GBEmbed}, $G$ embeds into $H\wr\fS_d$.
Since $d\leq\ip{n/m}$, the desired conclusion follows.
\end{proof}

\section{A projectivity property of fully group-matricial biases}
\label{S:Proj}

The main aim of this section is the projectivity property of fully group-matricial biases, within the class of all $\scD$-cancellative biases, stated in Lemma~\ref{L:CancProj}.
Our first lemma is an analogue, for biases, of the lattice-theoretical result stating the projectivity of von~Neumann frames (cf. \cite{Freese76,Huhn72}).

\begin{lemma}\label{L:InProj}
The symmetric inverse monoid~$\fI_n$ is a projective bias, for every positive integer~$n$.
\end{lemma}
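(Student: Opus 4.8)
The plan is to show that every surjective bias homomorphism onto $\fI_n$ splits. Since the variety of biases, like any variety of algebras, has pullbacks, and the pullback of a surjection along an arbitrary bias homomorphism $f\colon\fI_n\to B$ is again a surjection onto $\fI_n$, a splitting of that pullback, composed with its projection to the domain, produces exactly the lift required for projectivity. So I would fix a surjective bias homomorphism $g\colon S\twoheadrightarrow\fI_n$ and construct a bias homomorphism $h\colon\fI_n\to S$ with $g\circ h=\id_{\fI_n}$.

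The idea is to lift to $S$ the standard system of matrix units of $\fI_n$: for $i,j\in[n]$ let $e_{i,j}\in\fI_n$ be the partial bijection sending $j$ to $i$, so that $e_{i,j}^{-1}=e_{j,i}$, $e_{i,j}e_{k,l}=\delta_{j,k}e_{i,l}$, $\dd(e_{i,j})=e_{j,j}$, $\rr(e_{i,j})=e_{i,i}$, the $e_{i,i}$ are pairwise orthogonal, and every element of $\fI_n$ is the orthogonal join of the $e_{i,j}$ indexed by the nonzero entries of its matrix. First I would lift the orthogonal idempotents $e_{1,1},\dots,e_{n,n}$: since $\Idp S\to\Idp\fI_n$ is onto (lift any element of $\fI_n$ to the domain idempotent of a preimage) and $\Idp\fI_n$ is a finite Boolean algebra, choose $f_k''\in\Idp S$ with $g(f_k'')=e_{k,k}$ and set $f_k=f_k''\sd\bigoplus_{l<k}f_l$, obtaining pairwise orthogonal idempotents $f_1,\dots,f_n\in\Idp S$ with $g(f_k)=e_{k,k}$ for each $k$. (We neither need nor arrange $\bigoplus_kf_k=1$, and $S$ is not assumed unital.) Next, for $i\ge2$ choose $u_i\in S$ with $g(u_i)=e_{i,1}$ and replace it by $c_i=f_iu_if_1$, so that $\rr(c_i)\le f_i$, $\dd(c_i)\le f_1$, and $g(c_i)=e_{i,1}$.

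Here lies the one genuine difficulty, and I expect it to be the main obstacle: the domain idempotents $\dd(c_i)$ need not equal $f_1$, only $g(\dd(c_i))=e_{1,1}$. It is removed by replacing $f_1$ with the common lower bound $p=\bigwedge_{i\ge2}\dd(c_i)$ (set $p=f_1$ if $n=1$); then $g(p)=e_{1,1}$, and putting $c_i'=c_ip$ for $i\ge2$ and $c_1'=p$ gives elements with $\dd(c_i')=p$ for every $i$, whose range idempotents $q_i=\rr(c_i')\le f_i$ are pairwise orthogonal with $g(q_i)=e_{i,i}$. Setting $\hat e_{i,j}=c_i'(c_j')^{-1}$, a routine inverse-semigroup computation yields the matrix-unit relations $\hat e_{i,j}^{-1}=\hat e_{j,i}$, $\hat e_{i,j}\hat e_{k,l}=\delta_{j,k}\hat e_{i,l}$ (products across distinct $j,k$ vanish because the $q_m$ are pairwise orthogonal), $\dd(\hat e_{i,j})=q_j$, $\rr(\hat e_{i,j})=q_i$, and $g(\hat e_{i,j})=e_{i,j}$. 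It follows that for any $\Gamma\subseteq[n]\times[n]$ with pairwise distinct first coordinates and pairwise distinct second coordinates — that is, the support of some element of $\fI_n$ — the family $\setm{\hat e_{i,j}}{(i,j)\in\Gamma}$ is pairwise orthogonal. I would then define $h\colon\fI_n\to S$ by sending $x=\bigoplus_{(i,j)\in\Gamma_x}e_{i,j}$ to $\bigoplus_{(i,j)\in\Gamma_x}\hat e_{i,j}$, and check — using that multiplication, inversion and orthogonal join distribute over orthogonal joins, together with the matrix-unit relations — that $h$ preserves the zero, multiplication, and orthogonal joins, hence is a bias homomorphism. Finally $g(h(x))=\bigoplus_{(i,j)\in\Gamma_x}g(\hat e_{i,j})=x$, so $g\circ h=\id_{\fI_n}$ and $g$ splits. (Equivalently, one may organize the construction by noting that the corner $pSp$ maps onto $e_{1,1}\fI_n e_{1,1}\cong\two$, that $\two$ is trivially projective, and that the resulting section of $g$ on that corner is then spread over $[n]\times[n]$ by the lifted matrix units $\hat e_{i,j}$.)
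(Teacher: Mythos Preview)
Your proof is correct and follows essentially the same approach as the paper's: both lift a ``star'' of matrix units centered at vertex~$1$, take a meet of idempotents at the center to align the lifts, and then generate the full system of matrix units $\hat e_{i,j}$ (your $c_i'(c_j')^{-1}$ corresponds to the paper's $b_{1,i}^{-1}b_{1,j}$, with the roles of $i$ and $1$ swapped). The only cosmetic differences are that you first lift all diagonal idempotents to orthogonal $f_k$ and then sandwich, whereas the paper directly lifts the $e_{1,i}$ and orthogonalizes their domains; and you include the pullback remark reducing categorical projectivity to splitting of surjections, which the paper takes as the definition.
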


\begin{proof}
Let~$S$ be a Boolean inverse semigroup.
We need to prove that every surjective bias homomorphism $\gf\colon S\twoheadrightarrow\fI_n$ has a right inverse bias embedding.
The kernel~$\bgf$ of~$\gf$ is an additive congruence of~$S$.
Denote by~$e_{i,j}$ the unique function from~$\set{j}$ to~$\set{i}$, whenever $i,j\in[n]$.
Since~$\gf$ is a surjective homomorphism of inverse semigroups, for every $i\in[n]$, there exists $a_{1,i}\in S$ such that $\gf(a_{1,i})=e_{1,i}$.
Since~$e_{1,1}$ is idempotent, we may take~$a_{1,1}$ idempotent.
By replacing each~$a_{1,i}$ by $a_{1,i}\pI{\dd(a_{1,i})\sd\bigvee_{1\leq j<i}\dd(a_{1,j})}$, we may assume that $\dd(a_{1,i})\dd(a_{1,j})=0$, that is,
 \begin{equation}\label{Eq:a1ia1j-1}
 a_{1,i}a_{1,j}^{-1}=0\,,\quad\text{for all distinct }i,j\in[n]\,.
 \end{equation}
Now set
 \begin{align*}
 b_{1,1}&\eqdef\bigwedge_{j=1}^n\rr(a_{1,j})\,,\\
 b_{i,i}&\eqdef a_{1,i}^{-1}b_{1,1}a_{1,i}\,,\\
 b_{1,i}&\eqdef b_{1,1}a_{1,i}\,, 
 \end{align*}
for each $i\in[n]$.
This causes no conflict of notation, because $b_{1,1}\leq a_{1,1}$.
All elements~$b_{i,i}$ are idempotent, and, by~\eqref{Eq:a1ia1j-1}, they are pairwise orthogonal.
Furthermore, for every~$i\in[n]$, it is easy to verify that $\gf(b_{i,i})=e_{i,i}$, $\gf(b_{1,i})=e_{1,i}$, $\dd(b_{1,i})=b_{i,i}$, and $\rr(b_{1,i})=b_{1,1}$.
Since $b_{1,i}\leq a_{1,i}$ and by~\eqref{Eq:a1ia1j-1}, we get 
 \begin{equation*}
 b_{1,i}b_{1,j}^{-1}=0\,,\quad\text{for all distinct }i,j\in[n]\,.
 \end{equation*}
We set $b_{i,j}\eqdef b_{1,i}^{-1}b_{1,j}$, whenever $i,j\in[n]$.
Then $\gf(b_{i,j})=e_{1,i}^{-1}e_{1,j}=e_{i,1}e_{1,j}=e_{i,j}$.
Furthermore, by using the above, it is not hard to verify that the~$b_{i,j}$ form a system of matrix units in~$S$, that is, $b_{i,j}b_{k,l}=\delta_{j,k}b_{i,l}$, whenever $i,j,k,l\in[n]$.
The map $\psi\colon\fI_n\to S$, $x\mapsto\bigoplus_{i\in\dom(x)}b_{x(i),i}$ is an additive semigroup homomorphism, and $\gf\circ\psi=\id_{\fI_n}$.
\end{proof}

In order to establish an analogue of Lemma~\ref{L:InProj} for fully group-matricial biases, we will need to add to our assumptions a statement of $\scD$-cancellativity.
We first prove a crucial preparatory lemma.

\begin{lemma}\label{L:LiftInv}
Let~$\bgq$ be an additive congruence of a $\scD$-cancellative Boolean inverse semigroup~$S$, let $a,b\in\Idp S$ such that $a\scD_S b$, and let~$\bx\in S/{\bgq}$ such that $\dd(\bx)=a/{\bgq}$ and $\rr(\bx)=b/{\bgq}$.
Then there exists $x\in\bx$ such that $\dd(x)=a$ and $\rr(x)=b$.
\end{lemma}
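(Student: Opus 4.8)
The plan is to lift $\bx$ to an arbitrary preimage $x_0\in\bx$, measure the "defect" between $\dd(x_0),\rr(x_0)$ and the prescribed $a,b$, and repair it using $\scD$-cancellativity of $S$. First I would pick any $x_0\in S$ with $x_0/{\bgq}=\bx$; replacing $x_0$ by $b x_0 a$ (legitimate since $a,b$ are idempotent), and using that $\bgq$ is an additive — hence inverse-semigroup — congruence together with $\dd(\bx)=a/{\bgq}$, $\rr(\bx)=b/{\bgq}$, I may assume $\dd(x_0)\leq a$ and $\rr(x_0)\leq b$, and then $\dd(x_0)/{\bgq}=a/{\bgq}$, $\rr(x_0)/{\bgq}=b/{\bgq}$. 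Set $a'=a\sd\dd(x_0)$ and $b'=b\sd\rr(x_0)$; these are idempotents of $S$ lying $\bgq$-below $0$, i.e. $a'\equiv_{\bgq}0$ and $b'\equiv_{\bgq}0$, and $a=\dd(x_0)\oplus a'$, $b=\rr(x_0)\oplus b'$.

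The key step is to produce an element $y\in S$ with $\dd(y)=a'$, $\rr(y)=b'$, and $y\equiv_{\bgq}0$; then $x\eqdef x_0\oplus y$ will satisfy $\dd(x)=a$, $\rr(x)=b$, and $x\equiv_{\bgq}x_0$, so $x\in\bx$, as desired. (The join $x_0\oplus y$ exists because $\dd(x_0)\perp a'=\dd(y)$ and $\rr(x_0)\perp b'=\rr(y)$, which forces $x_0\perp y$.) To build $y$, I would first establish $a'\scD_S b'$. From $a\scD_S b$ we get some $w\in S$ with $\dd(w)=a$, $\rr(w)=b$; then $\dd(x_0)\oplus a'=\dd(w)$ and $\rr(x_0)\oplus b'=\rr(w)$, while $\dd(x_0)\scD_S\rr(x_0)$ via $x_0$ — so $\scD$-cancellativity of $S$ (Definition~\ref{D:Dcanc}) applied to $\dd(x_0)\oplus a'\scD_S\rr(x_0)\oplus b'$ together with $\dd(x_0)\scD_S\rr(x_0)$ yields $a'\scD_S b'$. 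Pick $y\in S$ with $\dd(y)=a'$ and $\rr(y)=b'$.

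It remains to check $y\equiv_{\bgq}0$; this is where a little care is needed, since a priori $y$ is just \emph{some} witness of $a'\scD_S b'$, not obviously $\bgq$-trivial. But $y/{\bgq}$ is an element of $S/{\bgq}$ with $\dd(y/{\bgq})=a'/{\bgq}=0$ and $\rr(y/{\bgq})=b'/{\bgq}=0$; in any inverse semigroup an element whose domain idempotent is $0$ equals $0$ (indeed $y=y\dd(y)$). Hence $y/{\bgq}=0$, i.e. $y\equiv_{\bgq}0$, and setting $x=x_0\oplus y$ completes the argument.

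I expect the main obstacle to be purely bookkeeping: justifying the reduction to $\dd(x_0)\leq a$, $\rr(x_0)\leq b$ and keeping straight that $\bgq$ being an additive congruence lets us move idempotents and joins through it freely. The substantive content is the single application of $\scD$-cancellativity of $S$ to deduce $a'\scD_S b'$ from the two orthogonal decompositions — everything else is formal manipulation in an inverse semigroup.
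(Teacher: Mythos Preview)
Your proposal is correct and follows essentially the same approach as the paper's proof: replace the lift by $bx_0a$ to force $\dd(x_0)\leq a$ and $\rr(x_0)\leq b$ (the paper achieves this via $y'=yu'$ with $u'=\dd(y)\,a\,y^{-1}by$, which amounts to the same thing), then use $\scD$-cancellativity to get $a'\scD_S b'$ and adjoin a $\bgq$-null witness. One small remark: Definition~\ref{D:Dcanc} literally requires $a\oplus b=a'\oplus b'$, not merely $a\oplus b\scD a'\oplus b'$, so your invocation (and the paper's) tacitly uses either Proposition~\ref{P:Dcanc} or a one-line transport along the witness~$w$ of $a\scD_S b$---a harmless elision.
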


\begin{proof}
Pick $y\in\bx$.
It follows from our assumptions that $\dd(y)\equiv_{\bgq}a$ and $\rr(y)\equiv_{\bgq}b$.
Set $u\eqdef\dd(y)$ and $v\eqdef\rr(y)$.
The elements $u'\eqdef uay^{-1}by$ and $v'\eqdef yu'y^{-1}$ are idempotent, with $u'\leq ua$, $v'\leq vb$, $u'\equiv_{\bgq}u\equiv_{\bgq}a$, and $v'\equiv_{\bgq}v\equiv_{\bgq}b$.
Setting $y'\eqdef yu'$, we get
 \begin{gather*}
 \dd(y')=u'\dd(y)=u'\,,\\
 \rr(y')=yu'y^{-1}=v'\,,
 \end{gather*}
so $u'\scD_Sv'$.
Since $a\scD_Sb$, $u'\leq a$, and $v'\leq b$, and since~$S$ is $\scD$-cancellative, it follows that $a\sd u'\scD_Sb\sd v'$, that is, there is $s\in S$ such that $\dd(s)=a\sd u'$ and $\rr(s)=b\sd v'$.
{}From $u'\equiv_{\bgq}a$ it follows that $s\equiv_{\bgq}0$.
Set $x=y'\oplus s$.
Then $x\equiv_{\bgq}y'\equiv_{\bgq}y$, so $x\in\bx$.
Furthermore, $\dd(x)=a$ and $\rr(x)=b$.
\end{proof}

We can now state the promised projectivity statement for fully group-matricial biases.

\begin{lemma}\label{L:CancProj}
Let~$S$ be a $\scD$-cancellative Boolean inverse semigroup, let~$n$ be a positive integer, let~$H$ be a group, and let $\gf\colon S\twoheadrightarrow\Matp{n}{\pz{G}}$ be a surjective bias homomorphism.
Then there are a group~$\ol{G}$, a surjective group homomorphism $\psi\colon\ol{G}\twoheadrightarrow G$, and a bias embedding $\eta\colon\Matp{n}{\pz{\ol{G}}}\hookrightarrow S$ such that $\Matp{n}{\pz{\psi}}=\gf\circ\eta$.
\end{lemma}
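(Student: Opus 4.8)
The plan is to reduce the statement to the case $n=1$ by lifting a system of matrix units, and then to obtain the required cover of~$G$ as the group of units of a suitable corner of~$S$. The crucial point for the second step is that a \emph{unital} $\scD$-cancellative Boolean inverse monoid is factorizable (Proposition~\ref{P:Dcanc}).

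First I would lift the matrix units. View $\fI_n\cong\Matp{n}{\set{0,1}}$ as the sub-bias of~$\Matp{n}{\pz{G}}$ consisting of the matrices with entries in~$\set{0,1}$, and write $e_{i,j}$ for its standard matrix units. Then $\gf^{-1}(\fI_n)$ is a sub-bias of~$S$ on which~$\gf$ restricts to a surjection onto~$\fI_n$, so by the projectivity of~$\fI_n$ (Lemma~\ref{L:InProj}) there is a bias embedding $s\colon\fI_n\hookrightarrow S$ with $\gf\circ s=\id_{\fI_n}$. Setting $b_{i,j}\eqdef s(e_{i,j})$, the family $(b_{i,j})$ is a system of matrix units in~$S$ with $\gf(b_{i,j})=e_{i,j}$, each $b_{i,i}$ idempotent and the $b_{i,i}$ pairwise orthogonal; in particular $(b_{1,1},\dots,b_{n,n})$ is a homogeneous sequence. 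Put $e\eqdef\bigoplus_{i=1}^nb_{i,i}$ and $R_0\eqdef b_{1,1}Sb_{1,1}$. By Lemma~\ref{L:Mne1Se1} there is an isomorphism $\theta\colon\Matp{n}{R_0}\to eSe$, which may be taken to be $(x_{i,j})_{i,j}\mapsto\bigoplus_{i,j}b_{i,1}x_{i,j}b_{1,j}$. Moreover~$R_0$ is a unital Boolean inverse monoid with unit~$b_{1,1}$, and it is $\scD$-cancellative, because $\scD_{R_0}$ is the restriction of~$\scD_S$ to~$\Idp R_0$ (a $\scD_S$-witness between idempotents below~$b_{1,1}$ automatically lies in~$b_{1,1}Sb_{1,1}$) and orthogonal joins in~$R_0$ agree with those in~$S$.

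Since $\gf(b_{1,1})=e_{1,1}$, the restriction of~$\gf$ maps~$R_0$ into $e_{1,1}\Matp{n}{\pz{G}}e_{1,1}$, which is canonically isomorphic to~$\pz{G}$ via the $(1,1)$-entry; composing, I get a bias homomorphism $\gf_0\colon R_0\to\pz{G}$, and it is onto because for $g\in\pz{G}$ one may pick $x\in S$ with $\gf(x)=g_{(1,1)}$ and then $\gf_0(b_{1,1}xb_{1,1})=g$. This reduces matters to the case $n=1$, settled as follows. By Proposition~\ref{P:Dcanc}, the unital $\scD$-cancellative monoid~$R_0$ is factorizable. Let~$\ol{G}$ be the group of units of~$R_0$ and let $\psi\colon\ol{G}\to G$ be the restriction of~$\gf_0$ — its values are units of~$\pz{G}$, hence lie in~$G$. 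Then~$\psi$ is surjective: given $g\in G$, lift~$g$ to some $x\in R_0$; by factorizability $x\leq u$ for some unit~$u$ of~$R_0$; then $g=\gf_0(x)\leq\gf_0(u)$ in~$\pz{G}$ with~$\gf_0(u)$ a unit, whence $\gf_0(u)=g$ since units are maximal for the natural ordering. Finally, the map $\eta_0\colon\pz{\ol{G}}\to R_0$ sending each $u\in\ol{G}$ to~$u$ and $0$ to~$0$ is a bias homomorphism — it is multiplicative, preserves inversion and~$0$, and is vacuously additive since in~$\pz{\ol{G}}$ the only orthogonal pairs involve~$0$ — and it is injective; moreover $\gf_0\circ\eta_0=\pz{\psi}$ by construction.

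It then remains to assemble these pieces by applying the matrix functor of Lemma~\ref{L:MnHom}. Put $\eta\eqdef\iota\circ\theta\circ\Matp{n}{\eta_0}\colon\Matp{n}{\pz{\ol{G}}}\to S$, where $\iota\colon eSe\hookrightarrow S$ is the inclusion (a bias embedding, since~$eSe$ is a sub-bias of~$S$). Being a composite of bias embeddings, $\eta$ is a bias embedding. Evaluating on a matrix $(z_{i,j})_{i,j}$ and using that~$\gf$ is multiplicative with $\gf(b_{i,j})=e_{i,j}$, the identification of $e_{1,1}\Matp{n}{\pz{G}}e_{1,1}$ with~$\pz{G}$, and $\gf_0\circ\eta_0=\pz{\psi}$, one computes $\gf\circ\eta=\Matp{n}{\gf_0}\circ\Matp{n}{\eta_0}=\Matp{n}{\pz{\psi}}$, as desired. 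I expect the main obstacle to be locating the cover: one must realize that it is the group of units of the corner $b_{1,1}Sb_{1,1}$ — not of~$S$ itself — that surjects onto~$G$, and that factorizability, i.e.\ unital $\scD$-cancellativity, is exactly the hypothesis that guarantees this surjectivity. Everything else is routine, the only mildly delicate bookkeeping being the passage to the corner~$eSe$ in case~$S$ is not unital.
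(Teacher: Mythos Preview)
Your proposal is correct and follows essentially the same line as the paper's proof: lift a system of matrix units via the projectivity of~$\fI_n$, take~$\ol{G}$ to be the group of units of the corner $b_{1,1}Sb_{1,1}$, and use $\scD$-cancellativity to show that the induced $\psi\colon\ol{G}\to G$ is onto. The only organizational difference is that for this surjectivity step the paper isolates a separate lifting lemma (Lemma~\ref{L:LiftInv}, valid for arbitrary $a\scD_S b$) and applies it with $a=b=b_{1,1}$, whereas you pass through factorizability of the unital corner via Proposition~\ref{P:Dcanc}; in the case at hand these arguments are interchangeable, and your restriction to the sub-bias $\gf^{-1}(\fI_n)$ makes the appeal to Lemma~\ref{L:InProj} slightly cleaner.
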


The situation is illustrated on Figure~\ref{Fig:CancProj}.

\begin{figure}[htb]
\[
 \xymatrix{
 \Matp{n}{\pz{\ol{G}}}\ar@{->>}[rr]^{\Matp{n}{\pz{\psi}}}
 \ar@{^(->}[d]_{\eta} && \Matp{n}{\pz{G}}\\
 S\ar@{->>}[urr]_{\gf} &&
 }
 \]
\caption{A commutative triangle of biases}
\label{Fig:CancProj}
\end{figure}

\begin{proof}
For all $(i,j)\in[n]\times[n]$, denote by~$e_{i,j}$ the element of $\Matp{n}{\pz{G}}$ with $(i,j)$th entry equal to~$1$ and all other entries equal to~$0$.
By Lemma~\ref{L:InProj}, there is a system $(a_{i,j})_{(i,j)\in[n]\times[n]}$ of matrix units in~$S$ such that each $\gf(a_{i,j})=e_{i,j}$.
The subset
 \[
 \ol{G}=\setm{x\in S}{\dd(x)=\rr(x)=a_{1,1}}
 \]
is a subgroup of the monoid~$a_{1,1}Sa_{1,1}$.
For each $x\in\ol{G}$, $\dd(\gf(x))=\rr(\gf(x))=e_{1,1}$, thus $\gf(x)=\psi(x)_{(1,1)}$ for a unique $\psi(x)\in G$.
Clearly, $\psi$ is a group homomorphism from~$\ol{G}$ onto~$G$.

We claim that~$\psi$ is surjective.
Observe first that the kernel~$\bgf$ of~$\gf$ is an additive congruence of~$S$.
Let $g\in G$.
Since~$\gf$ is surjective, there exists $y\in S$ such that $\gf(y)=g_{(1,1)}$.
Set $\bx=y/{\bgf}$.
Since
 \[
 \gf(\dd(y))=\dd(\gf(y))=\dd(g_{(1,1)})=
 e_{1,1}=\gf(a_{1,1})\,,
 \]
we get $\dd(y)\equiv_{\bgf}a_{1,1}$, and similarly, $\rr(y)\equiv_{\bgf}a_{1,1}$.
Hence, $\dd(\bx)=\rr(\bx)=a_{1,1}/{\bgf}$.
By Lemma~\ref{L:LiftInv}, there exists $x\in\bx$ such that $\dd(x)=\rr(x)=a_{1,1}$; so $x\in\ol{G}$.
Moreover,
 \begin{align*}
 \gf(x)&=\gf(y)
 &&(\text{because }x\text{ belongs to }\bx=y/{\bgf})\\
 &=g_{(1,1)}\,, 
 \end{align*}
that is, $\psi(x)=g$, thus proving our claim.

For every $x\in\Matp{n}{\pz{\ol{G}}}$, it is not hard to verify that the elements $a_{i,1}x_{i,j}a_{1,j}$, for $(i,j)\in[n]\times[n]$, are pairwise orthogonal.
This enables us to set
 \[
 \eta(x)\eqdef\bigoplus_{(i,j)\in[n]\times[n]}
 a_{i,1}x_{i,j}a_{1,j}\,.
 \]
Elementary calculations show that~$\eta$ is an additive semigroup homomorphism, that is, a bias homomorphism, from $\Matp{n}{\pz{\ol{G}}}$ to~$S$.
Furthermore, for every $x\in\pz{\ol{G}}$, $\eta(x_{(1,1)})=a_{1,1}xa_{1,1}=x$, thus the restriction of~$\eta$ to the upper left corner of $\Matp{n}{\pz{\ol{G}}}$ is one-to-one.
By Lemma~\ref{L:CongMnS}, it follows that~$\eta$ is one-to-one.

Finally, for every $x\in\Matp{n}{\pz{\ol{G}}}$,
 \begin{align*}
 (\gf\circ\eta)(x)&=\bigoplus_{(i,j)\in[n]\times[n]}
 \gf(a_{i,1}x_{i,j}a_{1,j})\\
 &=\bigoplus_{(i,j)\in[n]\times[n]}
 e_{i,1}\psi(x_{i,j})_{(1,1)}e_{1,j}\\
 &=\bigoplus_{(i,j)\in[n]\times[n]}\psi(x_{i,j})_{(i,j)}\\
 &=\psi\pIII{\bigoplus_{(i,j)\in[n]\times[n]}(x_{i,j})_{(i,j)}}\\
 &=\psi(x)\,,
 \end{align*}
so $\gf\circ\eta=\psi$.
\end{proof}

\section{Boolean inverse semigroups with bounded index}
\label{S:BdedInd}

The main result of this section, Lemma~\ref{L:ind2eqn}, aims at relating the monoid-the\-o\-ret\-i\-cal
 concept of index, to be checked on the type monoid of a Boolean inverse semigroup~$S$, to the satisfaction of a certain inverse semigroup-theoretical identity, to be checked in~$S$.

\begin{lemma}\label{L:drxnn+1}
Let~$S$ be an inverse semigroup, let~$n$ be a positive integer, and let $x\in S$.
Then $\dd(x^n)=\rr(x^n)$ if{f} $\dd(x^n)=\dd(x^{n+1})$ and $\rr(x^n)=\rr(x^{n+1})$.
\end{lemma}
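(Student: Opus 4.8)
The plan is to handle the two implications separately; throughout I write $d_k=\dd(x^k)$, $r_k=\rr(x^k)$ and use the standard identities $\dd(uv)=v^{-1}\dd(u)v$ and $\rr(uv)=u\,\rr(v)\,u^{-1}$, the commutativity of idempotents, and the facts that for an idempotent $g$ one has $ga=a\iff\rr(a)\le g$ and $ag=a\iff\dd(a)\le g$. For the direction ``$\dd(x^n)=\rr(x^n)$ implies the two stated equalities'', set $e=d_n=r_n$; since $\dd$ (resp.\ $\rr$) of a product never exceeds $\dd$ of the last factor (resp.\ $\rr$ of the first), $e\le\dd(x)$ and $e\le\rr(x)$. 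From $x^{n+1}=x^n\cdot x$ one gets $\rr(x^{n+1})=x^n\,\rr(x)\,(x^n)^{-1}$, and because $x^n=x^n e$ and $e\,\rr(x)=e$ the middle factor drops out, leaving $\rr(x^{n+1})=x^n(x^n)^{-1}=r_n$; dually, from $x^{n+1}=x\cdot x^n$ with $x^n=e\,x^n$ and $e\,\dd(x)=e$ one obtains $\dd(x^{n+1})=(x^n)^{-1}\dd(x)\,x^n=(x^n)^{-1}x^n=d_n$. (Alternatively this direction is transparent after the Vagner--Preston embedding $S\hookrightarrow\fI_S$, where it concerns only domains and ranges of partial bijections.)

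For the converse, assume $d_n=d_{n+1}$ and $r_n=r_{n+1}$. First I would propagate the equalities upward: the factorization $x^{k+1}=x^k\cdot x$ gives the recursions $d_{k+1}=x^{-1}d_k x$ and $r_{k+1}=x\,r_k\,x^{-1}$, so an immediate induction yields $d_k=d_n$ and $r_k=r_n$ for all $k\ge n$. Setting $a=x^n$, $e=\dd(a)$, $f=\rr(a)$, this gives $\dd(a^2)=d_{2n}=e$ and $\rr(a^2)=r_{2n}=f$. Next I would extract from these the relations $fea=a$ and $afe=a$: multiplying $\dd(a^2)=\dd(a)$ on the left by $a$ turns $a(a^2)^{-1}a^2$ into $fea$ while $a\,\dd(a^2)=a\,\dd(a)=a$, and symmetrically $\rr(a^2)=\rr(a)$ multiplied on the right by $a$ gives $afe=a$. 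Finally, since idempotents commute, $ef=fe$ is idempotent, so $fea=a$ forces $f=\rr(a)\le ef$, which with $ef\le f$ gives $f=ef$; symmetrically $afe=a$ gives $e=ef$; hence $e=ef=f$, i.e.\ $\dd(x^n)=\rr(x^n)$.

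I expect the converse to be the only step needing any care. Its mathematical content is the implication ``$a^2\scR a$ and $a^2\scL a$ imply $\dd(a)=\rr(a)$'', which is essentially the Clifford--Miller (``location'') theorem about an $\scH$-class meeting its own square; the $fea=a$/$afe=a$ computation above is just a hands-on way of seeing this inside an inverse semigroup, the one point to keep in mind being that idempotents commute. The forward direction, the monotonicity remarks, and the upward propagation in the converse are all routine bookkeeping with the two identities for $\dd$ and $\rr$ of a product.
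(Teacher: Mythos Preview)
Your proof is correct and follows essentially the same route as the paper's: the forward direction is a direct computation (you compute $\rr(x^{n+1})$ and $\dd(x^{n+1})$ straight away, while the paper detours through $\dd(x^{2n})=\dd(x^n)$ and monotonicity), and for the converse both arguments first propagate the hypotheses to $k=2n$, set $a=x^n$, and then extract $e=ef=f$ from $\dd(a^2)=\dd(a)$ and $\rr(a^2)=\rr(a)$ using commutativity of idempotents. Your derivation of $fea=a$ and $afe=a$ is just a repackaging of the paper's string manipulation $x^{-n}x^n=\cdots=x^{-n}x^nx^nx^{-n}$; if anything, your version makes the separate roles of the $\dd$- and $\rr$-hypotheses more transparent.
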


\begin{proof}(G. Kudryavtseva)
If $\dd(x^n)=\rr(x^n)$, then
 \[
 \dd(x^{2n})=x^{-n}x^{-n}x^nx^n=x^{-n}x^nx^{-n}x^n
 =(\dd(x^n))^2=\dd(x^n)\,,
 \]
whence $\dd(x^n)=\dd(x^{n+1})$.
The proof that $\rr(x^n)=\rr(x^{n+1})$ is similar.

Suppose, conversely, that $\dd(x^n)=\dd(x^{n+1})$ and $\rr(x^n)=\rr(x^{n+1})$.
For any positive integer~$k$, if $\dd(x^k)=\dd(x^{k+1})$, then
 \[
 \dd(x^{k+1})=x^{-1}\dd(x^k)x=x^{-1}\dd(x^{k+1})x
 =\dd(x^{k+2})\,.
 \]
Hence, our assumption implies that $\dd(x^n)=\dd(x^k)$ for every $k\geq n$.
In particular,
 \begin{equation}\label{Eq:ddrrxn2n}
 \dd(x^n)=\dd(x^{2n})\,. 
 \end{equation}
Now
 \begin{align*}
 x^{-n}x^n&=x^{-n}x^nx^{-n}x^n
 &&(\text{that is, }\dd(x^n)\text{ is idempotent})\\
 &=x^{-n}x^{2n}x^{-2n}x^n
 &&(\text{use \eqref{Eq:ddrrxn2n}})\\
 &=x^{-n}x^nx^nx^{-n}x^{-n}x^n\\
 &=x^{-n}x^nx^nx^{-n}
 &&(\text{the idempotents }x^{-n}x^n\text{ and }x^nx^{-n}
 \text{ commute})\\
 &\leq x^nx^{-n}\,. 
 \end{align*}
The proof that $x^nx^{-n}\leq x^{-n}x^n$ is symmetric.
%
\end{proof}

Recall the notation $\ext{x}{y}=xyx^{-1}$, used in~\cite{WBIS}.

\begin{lemma}\label{L:ConjugDecr}
Let~$S$ be a Boolean inverse semigroup and let $x,e\in S$ with~$e$ idempotent.
Then $\typ_S(\ext{x}{e})\lep\typ_S(e)$ within~$\Typ S$.
\end{lemma}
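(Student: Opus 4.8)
The plan is to exhibit an element of $S$ whose domain idempotent sits below $e$ and whose range idempotent is exactly $\ext{x}{e}$; the inequality will then drop out from the $\scD$-invariance of $\typ_S$ together with the additivity of $\typ_S$ over orthogonal joins.

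Concretely, I would set $y\eqdef xe$. Since $e$ is idempotent, $y^{-1}=ex^{-1}$, so that $\rr(y)=yy^{-1}=xe\cdot ex^{-1}=xex^{-1}=\ext{x}{e}$, while $\dd(y)=y^{-1}y=ex^{-1}xe=e\,\dd(x)\,e=e\,\dd(x)$, the last equality holding because idempotents commute in an inverse semigroup. Thus $\dd(y)=e\,\dd(x)$ is idempotent and satisfies $\dd(y)\cdot e=e\,\dd(x)\,e=e\,\dd(x)=\dd(y)$, whence $\dd(y)\leq e$.

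To conclude: from $\dd(y)\leq e$, with both elements idempotent, the difference $e\sd\dd(y)$ is defined and $e=\dd(y)\oplus(e\sd\dd(y))$; applying $\typ_S$ and using that it turns orthogonal joins into sums gives $\typ_S(e)=\typ_S(\dd(y))+\typ_S(e\sd\dd(y))$, so that $\typ_S(\dd(y))\lep\typ_S(e)$ by the very definition of $\lep$. On the other hand $\dd(y)\scD\rr(y)$, and since the canonical map $\Int S\to\Typ S$ is one-to-one this yields $\typ_S(\ext{x}{e})=\typ_S(\rr(y))=\typ_S(\dd(y))$. Combining the two displays gives $\typ_S(\ext{x}{e})\lep\typ_S(e)$, as desired.

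There is no serious obstacle here: the argument rests only on the routine facts that $\typ_S$ is $\scD$-invariant and additive over orthogonal joins, and that $\Idp S$ is closed under the difference operation. The only step requiring a moment's thought is the choice of the witness $y=xe$ (rather than $ex$) and the small computation that its domain idempotent equals $e\,\dd(x)\leq e$, which is immediate from the commutativity of idempotents.
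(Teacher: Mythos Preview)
Your proof is correct and follows essentially the same approach as the paper: set $y=xe$, observe that $\rr(y)=\ext{x}{e}$ and $\dd(y)=e\,\dd(x)\leq e$, and conclude via $\typ_S(\rr(y))=\typ_S(\dd(y))\lep\typ_S(e)$. The paper's version is simply terser, writing the inequality $\typ_S(\dd(xe))\lep\typ_S(e)$ directly rather than spelling out the orthogonal decomposition $e=\dd(y)\oplus(e\sd\dd(y))$ as you do.
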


\begin{proof}
$\dd(xe)=\dd(x)e\leq e$, while $\rr(xe)=\ext{x}{e}$, so
$\typ_S(\ext{x}{e})=\typ_S(\rr(xe))=\typ_S(\dd(xe))\lep\typ_S(e)$.
\end{proof}

The identity $\dd(\vx^n)=\rr(\vx^n)$, the earliest appearance of which we are aware being~\cite[Theorem~3.4]{Reilly1980}, plays a crucial role in the following lemma.
It was suggested to the author by Ganna Kudryavtseva, together with a sketch of a proof of Corollary~\ref{C:ind2eqn}.
Our argument here is different.

\begin{lemma}\label{L:ind2eqn}
The following are equivalent, for any Boolean inverse semigroup~$S$ and every positive integer~$n$:
 \begin{enumeratei}
 \item\label{MonInd}
 $\typ_S(e)$ has index at most~$n$ in~$\Typ S$, for every $e\in\Idp S$;
 
 \item\label{drInd}
 $\dd(x^n)=\rr(x^n)$ for every $x\in S$.
 \end{enumeratei}
\end{lemma}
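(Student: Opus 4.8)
The statement relates a monoid-theoretic index bound on the type monoid with the satisfaction of the inverse-semigroup identity $\dd(\vx^n)=\rr(\vx^n)$. I would prove the two implications separately, using Lemma~\ref{L:drxnn+1} (Kudryavtseva's lemma) to convert the identity into the more tractable conjunction $\dd(x^n)=\dd(x^{n+1})$ and $\rr(x^n)=\rr(x^{n+1})$, and Lemma~\ref{L:ConjugDecr} to pass between idempotents and their conjugates at the level of types.

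\textbf{From \eqref{MonInd} to \eqref{drInd}.} Assume every $\typ_S(e)$ has index at most~$n$. Fix $x\in S$; I would show $\dd(x^k)=\dd(x^{k+1})$ for $k=n$ (and symmetrically for~$\rr$), which by Lemma~\ref{L:drxnn+1} gives $\dd(x^n)=\rr(x^n)$. The key observation is that the idempotents $e_k\eqdef\dd(x^k)$ form a descending chain $e_1\geq e_2\geq\cdots$, and that the ``layers'' $f_k\eqdef e_k\sd e_{k+1}$ are all $\scD$-below~$\dd(x)$ in a cumulative sense: conjugation by $x^{-1}$ carries $\dd(x^k)$ to (something below) $\dd(x^{k+1})$, so via Lemma~\ref{L:ConjugDecr} one gets $\typ_S(f_k)\lep\typ_S(f_{k-1})$ for each~$k$, hence a weak form in which $(n+1)$ pairwise orthogonal copies of a common type sit below $\typ_S(\dd(x))$ unless the chain has already stabilized at stage~$n$. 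More precisely: if $\dd(x^n)\neq\dd(x^{n+1})$, I would produce, by iterating the conjugation $z\mapsto x^{-1}zx$ on the nonzero idempotent $f_n=\dd(x^n)\sd\dd(x^{n+1})$, a sequence of $n+1$ pairwise orthogonal nonzero idempotents all lying below some $\dd(x^j)\leq\dd(x)$ and all of the same type, contradicting the index bound applied to $e=\dd(x)$. The symmetric argument handles~$\rr$.

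\textbf{From \eqref{drInd} to \eqref{MonInd}.} Assume $\dd(x^n)=\rr(x^n)$ for all $x\in S$; fix $e\in\Idp S$ and suppose for contradiction that $\typ_S(e)$ has index $>n$, so there are pairwise orthogonal nonzero idempotents $e_0,\dots,e_n\leq e$ with $\typ_S(e_0)=\cdots=\typ_S(e_n)$. Choosing elements $c_i\in S$ with $\dd(c_i)=e_0$ and $\rr(c_i)=e_i$, I would assemble the element $x\eqdef c_1c_0^{-1}\oplus c_2c_1^{-1}\oplus\cdots\oplus c_nc_{n-1}^{-1}$ (a ``shift'' sending $e_{i-1}$-part to $e_i$-part, orthogonal because the $e_i$ are orthogonal). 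Then $\dd(x)=e_0\oplus\cdots\oplus e_{n-1}$ and $\rr(x)=e_1\oplus\cdots\oplus e_n$, and one checks inductively that $\dd(x^k)=e_0\oplus\cdots\oplus e_{n-k}$ and $\rr(x^k)=e_k\oplus\cdots\oplus e_n$ for $1\leq k\leq n$. In particular $\dd(x^n)=e_0$ and $\rr(x^n)=e_n$, which are distinct and even orthogonal since $e_0\perp e_n$; this contradicts $\dd(x^n)=\rr(x^n)$. (Equivalently one can pass through $\Matp{n+1}{e_0Se_0}\hookrightarrow S$ via Lemma~\ref{L:Mne1Se1} and do the computation with the nilpotent-shift rook matrix there, which is cleaner bookkeeping.)

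\textbf{Main obstacle.} The delicate direction is \eqref{MonInd}$\Rightarrow$\eqref{drInd}: turning the index hypothesis into an actual contradiction requires manufacturing $n+1$ pairwise \emph{orthogonal} idempotents of equal type below a single idempotent, and the natural candidates (the conjugates $x^{-j}f_nx^{j}$ or the layers $f_k$) are a priori neither orthogonal nor of literally equal type — only related by $\lep$ via Lemma~\ref{L:ConjugDecr}. I expect the cleanest route is to first reduce, using Lemma~\ref{L:drxnn+1}, to proving $\dd(x^n)=\dd(x^{n+1})$, then to note that $\rr(x^{k}\dd(x^{k+1}))$-type iterates give a genuinely orthogonal homogeneous family inside $e_0Se_0$-style corners; packaging this correctly, perhaps by invoking the refinement-monoid structure of $\Typ S$ and Lemma~\ref{L:OrthRefMon} to split a common lower bound into $n+1$ orthogonal pieces, is where the real work lies.
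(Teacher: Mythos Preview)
Your direction \eqref{drInd}$\Rightarrow$\eqref{MonInd} is essentially the paper's argument: both build a nilpotent ``shift'' $x$ out of partial bijections linking $e_0,\dots,e_n$, and compute $\dd(x^n)=e_0\neq e_n=\rr(x^n)$ (or the reverse orientation). That is fine.

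The gap is in \eqref{MonInd}$\Rightarrow$\eqref{drInd}. Your layers $f_k=\dd(x^k)\sd\dd(x^{k+1})$ \emph{are} pairwise orthogonal (they are successive differences in a descending chain), and one does have $f_{k+1}=\ext{x^{-1}}{f_k}$, hence $\typ(f_{k+1})\lep\typ(f_k)$ by Lemma~\ref{L:ConjugDecr}. So neither orthogonality nor ``equal type'' is the obstacle: a chain of $\lep$-related types is exactly what the index argument needs. The real problem is a count. Working below $\dd(x)$ you obtain only the $n$ layers $f_1,\dots,f_n$, giving $n\cdot\typ(f_n)\lep\typ(\dd(x))$; but ``index $\le n$'' only kills elements~$a$ with $(n+1)a\lep\typ(e)$, so no contradiction arises. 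Iterating further to $f_{n+1},\dots$ does not help either, since their types lie \emph{below} $\typ(f_n)$, not above. Your proposed rescue via Lemma~\ref{L:OrthRefMon} or refinement is a red herring: that lemma concerns monoid-orthogonality (no common nonzero lower bound) and produces no extra summand.

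The paper's proof supplies the missing $(n+1)$st layer by enlarging the ambient idempotent: set $e=\dd(x)\vee\rr(x)$ and $b=e\sd\rr(x)$. Then $e=b\oplus\ext{x}{b}\oplus\cdots\oplus\ext{x^k}{b}\oplus\rr(x^{k+1})$ for every~$k$, and the $\ext{x^j}{b}$ are precisely the layers $\rr(x^j)\sd\rr(x^{j+1})$ together with the \emph{extra} zeroth layer $b=e\sd\rr(x)$. Now there are $n+1$ orthogonal summands $b,\ext{x}{b},\dots,\ext{x^n}{b}$ below~$e$, each of type $\geq^+\typ(\ext{x^n}{b})$, so $(n+1)\typ(\ext{x^n}{b})\lep\typ(e)$ forces $\ext{x^n}{b}=0$, i.e.\ $\rr(x^n)=\rr(x^{n+1})$; symmetry and Lemma~\ref{L:drxnn+1} finish. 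The single idea you are missing is to start the chain one step earlier, at $\dd(x)\vee\rr(x)$ rather than at $\dd(x)$.
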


\begin{proof}
\eqref{MonInd}$\Rightarrow$\eqref{drInd}.
Set $e\eqdef\dd(x)\vee\rr(x)$ and $b=e\sd\rr(x)$.
Then $e=b\oplus\rr(x)=b\oplus\ext{x}{e}$, thus $\rr(x)=\ext{x}{e}=\ext{x}{b}\oplus\ext{x^2}{e}$, so $e=b\oplus\ext{x}{b}\oplus\ext{x^2}{e}$, and so on.
By an easy induction argument, we thus get
 \begin{equation}\label{Eq:CancDec}
 e=b\oplus\ext{x}{b}\oplus\cdots\oplus\ext{x^k}{b}
 \oplus\rr(x^{k+1})\,,\quad\text{for every }k\in\ZZ^+\,.
 \end{equation}
By Lemma~\ref{L:ConjugDecr}, it follows that
$(k+1)\cdot\typ_S(\ext{x^k}{b})+\typ_S(\rr(x^{k+1}))\lep\typ_S(e)$.
Hence, taking $k=n$ and by assumption on the index of $\typ_S(e)$, we get $\ext{x^n}{b}=0$.
By applying~\eqref{Eq:CancDec} to $k=n$ and $k=n+1$, we thus get
 \begin{align*}
 e&=b\oplus\ext{x}{b}\oplus\cdots\oplus\ext{x^{n-1}}{b}
 \oplus\rr(x^n)\\
 &=b\oplus\ext{x}{b}\oplus\cdots\oplus\ext{x^{n-1}}{b}
 \oplus\rr(x^{n+1})\,,
 \end{align*}
whence $\rr(x^n)=\rr(x^{n+1})$.
By applying that result to~$x^{-1}$, we get $\dd(x^n)=\dd(x^{n+1})$.
By Lemma~\ref{L:drxnn+1}, it follows that $\dd(x^n)=\rr(x^n)$.

\eqref{drInd}$\Rightarrow$\eqref{MonInd}.
Suppose that $\typ_S(e)$ has index greater than~$n$, where $e\in\Idp S$.
By the definition of the index and by \cite[Lemma~4-1.6]{WBIS}, there are nonzero pairwise orthogonal idempotents $e_0$, \dots, $e_n$ such that $\typ_S(e_0)=\typ_S(e_i)$ whenever $0\leq i\leq n$.
For $0\leq i<n$, there exists $x_i\in S$ such that $\dd(x_i)=e_{i+1}$ and $\rr(x_i)=e_i$.
Observe that $x_ix_j\neq 0$ if{f} $j=i+1$, whenever $0\leq i,j<n$.
Set $x=x_0\oplus\cdots\oplus x_{n-1}$.
Then $x^n=x_0\cdots x_{n-1}$, with $\dd(x^n)=e_n$ distinct from $\rr(x^n)=e_0$.
\end{proof}

\begin{corollary}[G. Kudryavtseva]\label{C:ind2eqn}
Let~$G$ be a group and let~$n$ and~$k$ be positive integers.
Then $\Matp{k}{\pz{G}}$ satisfies the identity $\dd(\vx^n)=\rr(\vx^n)$ if{f} $k\leq n$.
\end{corollary}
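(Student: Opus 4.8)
The plan is to reduce everything to Lemma~\ref{L:ind2eqn} together with an explicit description of the type monoid of $S\eqdef\Matp{k}{\pz{G}}$. By Lemma~\ref{L:ind2eqn}, applied to~$S$ and the integer~$n$, the identity $\dd(\vx^n)=\rr(\vx^n)$ holds in~$S$ if{f} $\typ_S(e)$ has index at most~$n$ in~$\Typ S$ for every $e\in\Idp S$; so it suffices to identify the largest index attained by the type of an idempotent of~$S$.

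First I would record, via Lemma~\ref{L:Typ=Z}, that $(\Typ S,\typ_S(1))\cong(\ZZ^+,k)$, and identify $\Typ S$ with~$\ZZ^+$ accordingly. By Proposition~\ref{P:IdpMnS} the idempotents of~$S$ are exactly the diagonal matrices with entries in~$\set{0,1}$, hence they are parametrized by the subsets of~$[k]$, and the type of such an idempotent is the cardinality of the corresponding subset. Consequently $\setm{\typ_S(e)}{e\in\Idp S}=\set{0,1,\dots,k}$, the value~$k$ being attained at $e=1$.

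Next I would compute the index of an arbitrary $m\in\ZZ^+$ inside the monoid~$\ZZ^+$: the inequality $(p+1)x\lep m$ is hardest to avoid when $x=1$, where it reads $p+1\leq m$, so the least $p\in\ZZ^+$ for which $(p+1)x\lep m$ forces $x=0$ is $p=m$; that is, the index of~$m$ in~$\ZZ^+$ is~$m$. Hence the largest index of $\typ_S(e)$, over $e\in\Idp S$, is exactly~$k$, and this is at most~$n$ precisely when $k\leq n$. Feeding this into Lemma~\ref{L:ind2eqn} yields the claimed equivalence.

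There is no real obstacle here: the single genuine computation is the one-line fact that the index of~$m$ in~$\ZZ^+$ equals~$m$, and everything else is assembled from results already in hand (Lemma~\ref{L:Typ=Z}, Proposition~\ref{P:IdpMnS}, Lemma~\ref{L:ind2eqn}). Should one wish to bypass Lemma~\ref{L:Typ=Z}, the isomorphism $\Typ S\cong\ZZ^+$ with $\typ_S(1)=k$ also follows directly from Proposition~\ref{P:IdpMnS} together with the pairwise $\scD$-equivalence of the~$k$ atoms of~$\Idp S$.
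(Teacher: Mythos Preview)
Your proof is correct and follows essentially the same approach as the paper: identify $(\Typ S,\typ_S(1))\cong(\ZZ^+,k)$ (the paper does this via the observation that $\Idp S$ is a finite Boolean algebra with $k$ pairwise $\scD$-equivalent atoms, which is exactly your alternative route at the end), then invoke Lemma~\ref{L:ind2eqn}. You simply spell out explicitly the one-line computation that the index of~$m$ in~$\ZZ^+$ equals~$m$, which the paper leaves implicit.
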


\begin{proof}
The idempotents of $\Matp{k}{\pz{G}}$ form a finite Boolean lattice with~$k$ pairwise $\scD$-equivalent atoms.
Hence $(\Typ\Matp{k}{\pz{G}},\typ(1))\cong(\ZZ^+,k)$.
Apply Lemma~\ref{L:ind2eqn}.
\end{proof}

\begin{definition}\label{D:Index}
Let~$S$ be a Boolean inverse semigroup.
\begin{itemize}
\item
Define the \emph{index of an element~$x$ of~$S$} as~$0$ if~$x=0$, the least positive integer~$n$ such that $\dd(x^n)=\rr(x^n)$ if it exists and $x\neq0$, and~$\infty$ in all other cases (i.e., $x\neq0$ and $\dd(x^n)\neq\rr(x^n)$ for every positive integer~$n$).

\item
Define the \emph{index of~$S$} as the supremum of all indexes of all elements of~$S$.

\end{itemize}
Moreover, define the \emph{index of a class~$\cC$ of Boolean inverse semigroups} as the supremum of all indexes of all members of~$\cC$.
\end{definition}

Now the following is a reformulation of Lemma~\ref{L:ind2eqn}.

\begin{corollary}\label{C:ind2eqn1}
Let~$S$ be a Boolean inverse semigroup.
Then the index of~$S$ is equal to the supremum of the indexes, within the type monoid~$\Typ S$, of all elements of the type interval $\Int S$.
\end{corollary}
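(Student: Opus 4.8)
The plan is to read this corollary off Lemma~\ref{L:ind2eqn} by reconciling the two notions of ``index'' that appear in the statement---the one for Boolean inverse semigroups, computed from powers of elements, and the one for conical commutative monoids, computed on $\Typ S$---using two essentially elementary observations about the quantities involved.

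First I would recast the index of an element in terms of a single power. For $x\in S$ and a positive integer~$n$, I claim that the index of~$x$, in the sense of Definition~\ref{D:Index}, is at most~$n$ if{f} $\dd(x^n)=\rr(x^n)$. The backward implication is immediate from the definition. For the forward implication, suppose $\dd(x^m)=\rr(x^m)$ with $1\leq m\leq n$; by Lemma~\ref{L:drxnn+1} this yields $\dd(x^m)=\dd(x^{m+1})$ and $\rr(x^m)=\rr(x^{m+1})$, hence $\dd(x^{m+1})=\rr(x^{m+1})$, and iterating, $\dd(x^k)=\rr(x^k)$ for every $k\geq m$; in particular for $k=n$. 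For $x=0$ both sides hold for every~$n$. Taking the supremum over all $x\in S$, it follows that, for every positive integer~$n$, the index of~$S$ is at most~$n$ if{f} $\dd(x^n)=\rr(x^n)$ for all $x\in S$.

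Next I would note that $\Int S=\setm{\typ_S(e)}{e\in\Idp S}$, since $x\scL\dd(x)$, hence $x\scD\dd(x)$, hence $\typ_S(x)=\typ_S(\dd(x))$, for every $x\in S$. Therefore the supremum of the indexes, within~$\Typ S$, of the elements of~$\Int S$ is at most~$n$ if{f} $\typ_S(e)$ has index at most~$n$ in~$\Typ S$ for every $e\in\Idp S$, which by Lemma~\ref{L:ind2eqn} is in turn equivalent to $\dd(x^n)=\rr(x^n)$ holding for all $x\in S$. Combining this with the previous paragraph, for every positive integer~$n$ the index of~$S$ is at most~$n$ precisely when the supremum of the indexes of the elements of~$\Int S$ is at most~$n$; and both quantities equal~$0$ exactly when $S=\set{0}$, equivalently $\Int S=\set{0}$. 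Two elements of $\ZZ^+\sqcup\set{\infty}$ that are at most~$n$ for exactly the same values of~$n$ must coincide, so the two suprema are equal.

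I do not expect a genuine obstacle here: the substance is carried entirely by Lemma~\ref{L:ind2eqn}. The only points needing a little care are the stabilization step in the first observation---so that ``some power up to~$n$'' and ``the $n$-th power'' impose the same condition---and the bookkeeping of the boundary cases $x=0$ and $n=0$.
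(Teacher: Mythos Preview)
Your proposal is correct and is exactly the intended reading of the corollary: the paper gives no separate proof, merely calling the statement ``a reformulation of Lemma~\ref{L:ind2eqn},'' and your argument---using Lemma~\ref{L:drxnn+1} for the stabilization step, identifying $\Int S$ with $\setm{\typ_S(e)}{e\in\Idp S}$, and then matching the two suprema level by level via Lemma~\ref{L:ind2eqn}---is precisely how one unpacks that reformulation. Your attention to the boundary case $S=\set{0}$ is appropriate, since agreement of the two quantities on ``${}\leq n$'' for positive~$n$ alone would not distinguish~$0$ from~$1$.
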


\begin{corollary}\label{C:ind2eqn2}
Let~$\cV$ be a variety of biases.
Then the index of~$\cV$ is equal to the largest nonnegative integer~$n$ such that $\fI_n\in\cV$ if it exists, $\infty$ otherwise.
\end{corollary}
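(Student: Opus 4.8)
The plan is to split into two cases according to whether $\cV$ is proper, reducing each to Corollary~\ref{C:ind2eqn1} combined with, respectively, Theorem~\ref{T:BISdecid} and Theorem~\ref{T:VarMnG}\eqref{BVbdedInd}.

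For the case $\cV=\Bis$, I would first invoke Theorem~\ref{T:BISdecid} to see that $\fI_N\in\cV$ for every positive integer~$N$, so that no largest such~$N$ exists and it suffices to prove that the index of~$\cV$ is~$\infty$. I would do this by bounding the index of~$\fI_N$ from below: since $\fI_N\cong\Matp{N}{\pz{\set{1}}}$, Lemma~\ref{L:Typ=Z} yields $(\Typ\fI_N,\typ_{\fI_N}(1))\cong(\ZZ^+,N)$, and since $N\cdot1\lep N$ in~$\ZZ^+$ the element~$\typ_{\fI_N}(1)$ has index at least~$N$ within $\Typ\fI_N$; Corollary~\ref{C:ind2eqn1} then gives that the index of~$\fI_N$, hence also that of~$\cV$, is at least~$N$. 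As $N$ is arbitrary, the index of~$\cV$ is~$\infty$.

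For the case $\cV$ proper, I would use Theorem~\ref{T:VarMnG}\eqref{BVbdedInd}, which provides a largest nonnegative integer~$h$ with $\fI_h\in\cV$ and identifies~$h$ with the largest value of the index of~$\typ_S(e)$ within $\Typ S$, over all $S\in\cV$ and all $e\in\Idp S$. The task is then to match this with the index of~$\cV$: by Corollary~\ref{C:ind2eqn1}, the index of any Boolean inverse semigroup~$S$ is the supremum, over all~$\bx\in\Int S$, of the index of~$\bx$ within $\Typ S$, and since every element of $\Int S$ has the form $\typ_S(x)=\typ_S(\dd(x))$ with $\dd(x)\in\Idp S$, this supremum ranges over the elements~$\typ_S(e)$, $e\in\Idp S$. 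Taking the supremum over all $S\in\cV$ recovers the largest value in Theorem~\ref{T:VarMnG}\eqref{BVbdedInd}, namely~$h$, which is the largest integer~$n$ with $\fI_n\in\cV$, as wanted.

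I do not expect a genuine obstacle here; the only point calling for care is the bookkeeping among the three incarnations of \emph{index} in play --- that of an element of a conical commutative monoid, that of a Boolean inverse semigroup, and that of a variety --- together with the observation that the suprema involved are actually attained, so that they genuinely are \emph{largest} values as the statement demands.
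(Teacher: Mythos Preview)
Your argument is correct. The paper states Corollary~\ref{C:ind2eqn2} without proof, evidently regarding it as immediate from what precedes; your expansion via Theorem~\ref{T:VarMnG}\eqref{BVbdedInd} and Corollary~\ref{C:ind2eqn1} is a legitimate and clean way to fill this in, and the bookkeeping you flag between the three notions of index is handled properly.

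One cosmetic remark: in the case $\cV=\Bis$, invoking Theorem~\ref{T:BISdecid} to obtain $\fI_N\in\cV$ is unnecessary, since $\Bis$ is by definition the class of all biases and thus contains each~$\fI_N$ trivially. Likewise, the lower bound on the index of~$\fI_N$ could alternatively be read off directly from Corollary~\ref{C:ind2eqn} (with $G=\set{1}$ and $k=N$), avoiding the detour through Lemma~\ref{L:Typ=Z}. Neither point affects correctness.
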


In particular, we emphasize that every variety of biases, distinct from the variety~$\Bis$ of all biases, has finite index (this follows from Theorem~\ref{T:VarMnG}).
Furthermore, for every positive integer~$n$, the class~$\Bis_n$ of all biases with index~$\leq n$ is a variety, defined by Reilly's identity $\dd(\vx^n)=\rr(\vx^n)$.
All its subdirectly irreducible members have the form $\Matp{k}{\pz{G}}$, where $0<k\leq n$ and~$G$ is a group.
In that sense, the identity $\dd(\vx^n)=\rr(\vx^n)$ is an analogue, for biases, of the Amitsur-Levitzki Theorem~\cite{AmLe50} for matrix rings.

\section{The variety order on fully group-matricial biases}
\label{S:VarOrder}

In this section we finally reach the main result of the paper, Theorem~\ref{T:VarOrdBias}, which states an isomorphism between proper varieties of biases and certain finite descending finite sequences of varieties of groups.

\begin{notation}\label{Not:nRad}
For a positive integer~$n$, the \emph{$n$-th radical}~$\Rad{n}{\cC}$ of a class~$\cC$ of biases is defined as the class of all groups~$G$ such that $\Matp{n}{\pz{G}}\in\cC$.
\end{notation}

\begin{lemma}\label{L:RadnVar}
Let~$\cV$ be a variety of biases.
Then $\Rad{n}{\cV}$ is either empty or a variety of groups, for every variety~$\cV$ of biases.
\end{lemma}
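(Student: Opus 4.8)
The plan is to verify, via Birkhoff's HSP theorem, that whenever $\Rad{n}{\cV}$ is nonempty it is a variety of groups; concretely, that it then contains the trivial group and is closed under homomorphic images, subgroups, and arbitrary direct products. The recurring observation I would use is that a bias of the form $\pz{G}$, with $G$ a group, has no nontrivial orthogonal pairs --- any two nonzero elements of $\pz{G}$ have nonzero product --- so that every zero-preserving semigroup homomorphism between biases of this shape is automatically additive, hence a bias homomorphism; moreover $\dd(x)=\rr(x)=1$ for every nonzero $x\in\pz{G}$. With this in hand, the three closure properties become routine applications of Lemma~\ref{L:MnHom}, which lifts injectivity and surjectivity from~$f$ to~$\Matp{n}{f}$.

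For the trivial group, I would fix $G\in\Rad{n}{\cV}$, so that $\Matp{n}{\pz{G}}\in\cV$, and note that the constant homomorphism $G\to\set{1}$ yields a surjective bias homomorphism $\pz{G}\twoheadrightarrow\pz{\set{1}}$, hence, by Lemma~\ref{L:MnHom}, a surjective bias homomorphism $\Matp{n}{\pz{G}}\twoheadrightarrow\Matp{n}{\pz{\set{1}}}\cong\fI_n$; since~$\cV$ is closed under homomorphic images, $\fI_n\in\cV$, i.e.\ $\set{1}\in\Rad{n}{\cV}$. For closure under homomorphic images, a surjection $\psi\colon G\twoheadrightarrow H$ with $G\in\Rad{n}{\cV}$ gives a surjective bias homomorphism $\pz{\psi}\colon\pz{G}\twoheadrightarrow\pz{H}$, whence $\Matp{n}{\pz{\psi}}$ is surjective and $\Matp{n}{\pz{H}}\in\cV$. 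For closure under subgroups, given $H\leq G$ with $G\in\Rad{n}{\cV}$, a direct inspection of~\eqref{Eq:Defsdf} and~\eqref{Eq:Defspl} (using $\dd(x)=\rr(x)=1$ for nonzero~$x$) shows that $\pz{H}$ is a sub-bias of $\pz{G}$, hence $\Matp{n}{\pz{H}}$ is a sub-bias of $\Matp{n}{\pz{G}}$ by Lemma~\ref{L:MnHom}, so it lies in~$\cV$.

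For closure under products, I would take a nonempty family $(G_i)_{i\in I}$ in $\Rad{n}{\cV}$, put $G=\prod_{i\in I}G_i$, and use the projections $G\twoheadrightarrow G_i$ to obtain surjective bias homomorphisms $\Matp{n}{\pz{G}}\twoheadrightarrow\Matp{n}{\pz{G_i}}$. These jointly separate the points of $\Matp{n}{\pz{G}}$: two distinct generalized rook matrices over $\pz{G}$ differ in some entry, and two distinct elements of $\pz{G}$ are distinguished by some coordinate projection, a zero entry being distinguished from a nonzero one in every coordinate. Hence the induced bias homomorphism $\Matp{n}{\pz{G}}\to\prod_{i\in I}\Matp{n}{\pz{G_i}}$ is an embedding, and since~$\cV$ is closed under products and subalgebras, $\Matp{n}{\pz{G}}\in\cV$, i.e.\ $G\in\Rad{n}{\cV}$. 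Birkhoff's theorem then yields the claim.

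I expect the only delicate point --- the main obstacle, such as it is --- to be the product step, and more precisely the fact that $\Matp{n}{\pz{\prod_i G_i}}$ genuinely \emph{embeds} into $\prod_i\Matp{n}{\pz{G_i}}$ rather than merely mapping into it. One cannot simply invoke an isomorphism here, because $\pz{\prod_i G_i}$ is a proper sub-bias of $\prod_i\pz{G_i}$; the embedding has to be extracted from the joint separation of points by the induced projections, which is the one computation worth spelling out. Everything else reduces to the bookkeeping furnished by Lemma~\ref{L:MnHom} and the definition of a variety.
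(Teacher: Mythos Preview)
Your proposal is correct and follows essentially the same approach as the paper: verify closure under subgroups, homomorphic images, and products via Lemma~\ref{L:MnHom}, then invoke Birkhoff's HSP Theorem. You simply spell out in more detail the steps the paper declares ``clear'' or ``canonical'' --- in particular the injectivity of $\Matp{n}{\pz{\prod_i G_i}}\to\prod_i\Matp{n}{\pz{G_i}}$, which the paper asserts without argument --- and your extra remark on the trivial group is harmless though redundant (it already follows from closure under homomorphic images).
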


\begin{proof}
It is clear that $\Rad{n}{\cV}$ is closed under subgroups.
If a group~$H$ is a homomorphic image of a group~$G$, then, by Lemma~\ref{L:MnHom}, $\Matp{n}{\pz{H}}$ is a homomorphic image of~$\Matp{n}{\pz{G}}$; hence $G\in\Rad{n}{\cV}$ implies that $H\in\Rad{n}{\cV}$.
Finally, if~$I$ is a nonempty set and $\vecm{G_i}{i\in I}$ is a family of groups, then, setting $G=\prod_{i\in I}G_i$, the bias $\Matp{n}{\pz{G}}$ canonically embeds into $\prod_{i\in I}\Matp{n}{\pz{G_i}}$; hence $\setm{G_i}{i\in I}\subseteq\Rad{n}{\cV}$ implies that $G\in\Rad{n}{\cV}$.
The desired conclusion follows then from Birkhoff's HSP Theorem.
\end{proof}

By Corollary~\ref{C:ind2eqn2}, $\Rad{n}{\cV}$ is nonempty if{f}~$n$ is less than or equal to the index of the variety~$\cV$.
Also, observe that trivially, $\Rad{n+1}{\cV}\subseteq\Rad{n}{\cV}$.
The following lemma is the main technical result of this section.

\begin{lemma}\label{L:VarOrdbias}
Let~$I$ be a nonempty set, let $\vecm{n_i}{i\in I}$ be a bounded family of positive integers, let $\vecm{G_i}{i\in I}$ be a family of groups, let~$n$ be a positive integer, and let~$G$ be a group.
Then~$\Matp{n}{\pz{G}}$ belongs to the variety~$\cV$ of biases generated by $\setm{\Matp{n_i}{\pz{G_i}}}{i\in I}$ if{f} $n\leq n_o$ for some $o\in I$ and~$G$ belongs to the variety~$\cG$ of groups generated by $\setm{G_i\wr\fS_{\ip{n_i/n}}}{i\in I\,,\ n\leq n_i}$.
\end{lemma}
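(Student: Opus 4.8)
The plan is to prove the two implications separately; the ``if'' direction is a short consequence of the earlier material, while the ``only if'' direction carries the real weight.

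\emph{The ``if'' direction.} Set $N=\max_{i\in I}n_i$, which exists since the $n_i$ take finitely many values. By Corollary~\ref{C:ind2eqn} each $\Matp{n_i}{\pz{G_i}}$ satisfies Reilly's identity $\dd(\vx^N)=\rr(\vx^N)$, hence so does $\cV$; in particular $\cV$ is proper. Assume $n\leq n_o$ for some $o\in I$ and $G\in\cG$. By Lemma~\ref{L:RadnVar} the $n$-th radical $\Rad{n}{\cV}$ is either empty or a variety of groups. For each $i$ with $n\leq n_i$, Lemma~\ref{L:GBEmbed2} gives a bias embedding of $\Matp{n}{\pz{G_i\wr\fS_{\ip{n_i/n}}}}$ into $\Matp{n_i}{\pz{G_i}}\in\cV$, so $G_i\wr\fS_{\ip{n_i/n}}\in\Rad{n}{\cV}$; taking $i=o$ shows $\Rad{n}{\cV}\neq\es$, hence it is a variety of groups, hence it contains the variety $\cG$ generated by those wreath products, and therefore $G\in\Rad{n}{\cV}$, i.e.\ $\Matp{n}{\pz{G}}\in\cV$.

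\emph{The ``only if'' direction: the bound and the reduction.} Suppose $\Matp{n}{\pz{G}}\in\cV$. Then $\Matp{n}{\pz{G}}$ satisfies $\dd(\vx^N)=\rr(\vx^N)$, so $n\leq N$ by Corollary~\ref{C:ind2eqn}, and since $N=n_o$ for a suitable $o$ this is the first assertion. For the second, we first reduce to $G$ subdirectly irreducible: $G$ is a subdirect product of subdirectly irreducible groups $G/M_s$, each $\Matp{n}{\pz{G/M_s}}$ is a homomorphic image of $\Matp{n}{\pz{G}}$ by Lemma~\ref{L:MnHom}, hence lies in $\cV$, and if each $G/M_s$ belongs to $\cG$ then so does $G$. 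So let $G$ be subdirectly irreducible; by Corollary~\ref{C:CongMnS}, $\Con\Matp{n}{\pz{G}}\cong(\NSub G)\sqcup\set{\infty}$, which has a least nonzero element, so $\Matp{n}{\pz{G}}$ is a subdirectly irreducible bias. By Birkhoff's HSP theorem there are a family $\seqm{\Matp{n_{i_k}}{\pz{G_{i_k}}}}{k\in K}$ with $i_k\in I$, a sub-bias $T$ of $P\eqdef\prod_{k\in K}\Matp{n_{i_k}}{\pz{G_{i_k}}}$, and a surjective bias homomorphism $\phi\colon T\twoheadrightarrow\Matp{n}{\pz{G}}$. Since $\fI_n$ is a projective bias (Lemma~\ref{L:InProj}), the system of matrix units of $\Matp{n}{\pz{G}}$ lifts through $\phi$ to a system $(a_{i,j})_{i,j\in[n]}$ of matrix units of $T$; replacing $T$ by $\ol aT\ol a$ with $\ol a=\bigoplus_{i}a_{i,i}$ we may assume $T$ unital with unit $\ol a$ and $\phi(\ol a)=1$. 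By Lemma~\ref{L:Mne1Se1}, $T\cong\Matp{n}{a_{1,1}Ta_{1,1}}$ and $\phi$ restricts to a surjection $\phi_0\colon a_{1,1}Ta_{1,1}\twoheadrightarrow\pz{G}$. Projecting the pairwise orthogonal, pairwise $\scD$-equivalent idempotents $a_{1,1},\dots,a_{n,n}$ to each coordinate shows that the $k$-th component of $a_{1,1}$ is a diagonal idempotent of some rank $r_k$ with $nr_k\leq n_{i_k}$; thus $r_k\leq\ip{n_{i_k}/n}$, and $r_k\geq1$ forces $n\leq n_{i_k}$. Discarding the coordinates with $r_k=0$ we get that $a_{1,1}Pa_{1,1}\cong\prod_{k\in K}\Matp{r_k}{\pz{G_{i_k}}}$ and that $a_{1,1}Ta_{1,1}$ is a sub-bias of it mapped onto $\pz{G}$ by $\phi_0$.

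\emph{Lifting the group and concluding.} Applying the projectivity of fully group-matricial biases (Lemma~\ref{L:CancProj}, with the matrix order there equal to~$1$) to $\phi_0$, we obtain a group $\ol G$, a surjection $\ol G\twoheadrightarrow G$, and a bias embedding $\pz{\ol G}\hookrightarrow a_{1,1}Ta_{1,1}$. Composing with the coordinate projections yields bias homomorphisms $\pz{\ol G}\to\Matp{r_k}{\pz{G_{i_k}}}$ that are jointly one-to-one; by Corollary~\ref{C:CongMnS} each is either the zero map or factors as $\pz{\ol G}\twoheadrightarrow\pz{\ol G/\ol N_k}\hookrightarrow\Matp{r_k}{\pz{G_{i_k}}}$ with $\ol N_k\trianglelefteq\ol G$, and joint injectivity gives $\bigcap_k\ol N_k=\set{1}$ (intersection over the non-zero coordinates). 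For such a coordinate $r_k\geq1$, whence $n\leq n_{i_k}$, and by Lemma~\ref{L:GBEmbed} we get $\ol G/\ol N_k\hookrightarrow G_{i_k}\wr\fS_{r_k}\hookrightarrow G_{i_k}\wr\fS_{\ip{n_{i_k}/n}}$. Therefore $\ol G$, being a subdirect product of these quotients, belongs to $\cG$, and so does its homomorphic image $G$.

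\emph{The main obstacle.} The delicate point is the invocation of Lemma~\ref{L:CancProj}, which requires $a_{1,1}Ta_{1,1}$ to be $\scD$-cancellative: a sub-bias of the $\scD$-cancellative bias $a_{1,1}Pa_{1,1}$ need not itself have cancellative type monoid, so one must choose the subalgebra $T$ witnessing $\Matp{n}{\pz{G}}\in\cV$ so that the relevant corner is $\scD$-cancellative (equivalently, factorizable). Securing such a choice — exploiting that $\Matp{n}{\pz{G}}$ is itself $\scD$-cancellative together with the projectivity material of Section~\ref{S:Proj} — is the heart of the argument; once it is in place, everything else above is routine bookkeeping with corners and matrix units.
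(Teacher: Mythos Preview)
Your ``if'' direction is correct and essentially identical to the paper's.

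Your ``only if'' direction has the right architecture, but the gap you yourself flag at the end is real and left unfilled: as written this is not a proof. The good news is that the gap closes far more easily than you suggest. You do not need to \emph{choose} $T$ cleverly; \emph{every} sub-bias $T$ of $P=\prod_k\Matp{n_{i_k}}{\pz{G_{i_k}}}$ is automatically $\scD$-cancellative. Indeed, with $N=\max_i n_i$ each factor lies in $\Bis_N$, hence so does~$P$ and hence~$T$; by Lemma~\ref{L:ind2eqn} every $\typ_T(e)$ has index at most~$N$ in~$\Typ T$, so by \cite[Corollary~3.12 and Proposition~3.13]{WDim} (quoted in the paper as \cite[Lemmas 1-5.3 and 2-3.6]{WBIS}) the conical refinement monoid~$\Typ T$ is cancellative, whence~$T$ is $\scD$-cancellative by Proposition~\ref{P:Dcanc}. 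Corners inherit $\scD$-cancellativity (the $\scD$-relation on $eTe$ is the restriction of that on~$T$), so your appeal to Lemma~\ref{L:CancProj} for $\phi_0$ is then legitimate and the rest of your argument goes through.

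Compared with the paper's proof, your route is slightly more hands-on: you first reduce to $G$ subdirectly irreducible and lift matrix units via Lemma~\ref{L:InProj}, then apply Lemma~\ref{L:CancProj} at matrix order~$1$ in the corner. The paper instead applies Lemma~\ref{L:CancProj} directly at order~$n$ to the full surjection $S\twoheadrightarrow\Matp{n}{\pz{G}}$, obtaining an embedding $\Matp{n}{\pz{\ol G}}\hookrightarrow S\hookrightarrow P$, and only then analyses the coordinate maps via Corollary~\ref{C:CongMnS} and Lemma~\ref{L:GBEmbed2}. This avoids the preliminary reduction to subdirectly irreducible~$G$ and the separate matrix-unit bookkeeping, but both approaches hinge on exactly the same idea: bounded index forces cancellativity of the type monoid, which is what licenses the projectivity lemma.
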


\begin{proof}
Suppose first that $n\leq n_o$ for some $o\in I$.
We must prove that $\Matp{n}{\pz{G}}$ belongs to~$\cV$, for each $G\in\cG$; that is, we must prove that~$\cG$ is contained in $\Rad{n}{\cV}$.

Let $i\in I$ with $n\leq n_i$.
It follows from Lemma~\ref{L:GBEmbed2} that $G_i\wr\fS_{\ip{n_i/n}}$ belongs to~$\Rad{n}{\cV}$.
By Lemma~\ref{L:RadnVar}, it follows that~$\cG$ is contained in~$\Rad{n}{\cV}$.

Suppose, conversely, that $\Matp{n}{\pz{G}}$ belongs to~$\cV$.
By Birkhoff's HSP Theorem, there are a Boolean inverse semigroup~$S$ and a surjective bias homomorphism $\gf\colon S\twoheadrightarrow\Matp{n}{\pz{G}}$ such that~$S$ embeds into a product of biases of the form $\Matp{n_i}{\pz{G_i}}$.
Setting $m=\max\setm{n_i}{i\in I}$, all biases $\Matp{n_i}{\pz{G_i}}$ belong to the variety~$\Bis_m$ of all biases of index at most~$m$, thus so does~$S$.
By Lemma~\ref{L:ind2eqn}, the index of~$\typ(e)$ in $\Typ S$ is at most~$m$, for every $e\in\Idp S$.
By \cite[Corollary~3.12]{WDim} (see also \cite[Lemma 1-5.3]{WBIS}), every element of~$\Typ S$ has finite index in~$\Typ S$.
By \cite[Proposition~3.13]{WDim} (see also \cite[Lemma 2-3.6]{WBIS}), the monoid $\Typ S$ is cancellative, thus (cf. Proposition~\ref{P:Dcanc}) $S$ is $\scD$-cancellative.
By Lemma~\ref{L:CancProj}, there are a group~$\ol{G}$, a surjective group homomorphism $\psi\colon\ol{G}\twoheadrightarrow G$, and a bias embedding $\eta\colon\Matp{n}{\pz{\ol{G}}}\hookrightarrow S$ such that $\Matp{n}{\pz{\psi}}=\gf\circ\eta$.
Since~$G$ belongs to the variety of groups generated by~$\ol{G}$, this reduces the problem to the case where $S=\Matp{n}{\pz{G}}$.

By possibly renaming the $(n_i,G_i)$, we can reduce the problem to the case where there is a bias embedding $\psi\colon\Matp{n}{\pz{G}}\hookrightarrow\prod_{i\in I}\Matp{n_i}{\pz{G_i}}$.
We may assume, in addition, that the $i$th component $\psi_i\colon\Matp{n}{\pz{G}}\to\Matp{n_i}{\pz{G_i}}$ of the map~$\psi$ is nonconstant, for every $i\in I$.
By Corollary~\ref{C:CongMnS}, the kernel~$\bgy_i$ of~$\psi_i$ has the form $\Matp{n}{\pz{\bgq_i}}$, where~$\bgq_i$ is the congruence of~$G$ associated to a normal subgroup~$H_i$ of~$G$.
Since~$\psi$ is one-to-one, the intersection of all congruences~$\bgq_i$ is the diagonal of~$G$, thus the intersection of all normal subgroups~$H_i$ is~$\set{1}$.
Now the bias homomorphisms $\Matp{n}{\pz{\gq_i}}\colon\Matp{n}{\pz{G}}\twoheadrightarrow\Matp{n}{\pz{(G/H_i)}}$ and $\psi_i\colon\Matp{n}{\pz{G}}\to\Matp{n_i}{\pz{G_i}}$ both have kernel $\Matp{n}{\pz{\bgq_i}}$.
Since $\Matp{n}{\pz{\gq_i}}$ is surjective, there is a unique bias embedding $\tau_i\colon\Matp{n}{\pz{(G/H_i)}}\hookrightarrow\Matp{n_i}{G_i}$ such that $\psi_i=\tau_i\circ\Matp{n}{\pz{\gq_i}}$.
By Lemma~\ref{L:GBEmbed2}, it follows that $n\leq n_i$ and $G/H_i$ embeds, as a subgroup, into $G_i\wr\fS_{\ip{n_i/n}}$.
Since~$G$ embeds into the product of all $G/H_i$ and each $G_i\wr\fS_{\ip{n_i/n}}$ belongs to~$\cG$, it follows that $G\in\cG$.
\end{proof}

Our next notation introduces an operator, denoted by~$\Wrn_{n}$, which sends any class of groups to either a variety of groups or the empty class.

\begin{notation}\label{Not:WRn}
For a class~$\cC$ of groups and a positive integer~$n$, we denote by $\Wr{n}{\cC}$ the variety of groups generated by $\setm{G\wr\fS_n}{G\in\cC}$ if $\cC\neq\es$, the empty class otherwise.
\end{notation}

\begin{lemma}\label{L:WRn}
Let~$\cC$ be a nonempty class of groups and let~$n$ be a positive integer.
Then $\Wr{n}{\cC}=\Wr{n}{\Var{\cC}}$.
\end{lemma}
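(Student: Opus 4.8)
The plan is to establish the two inclusions $\Wr{n}{\cC}\subseteq\Wr{n}{\Var(\cC)}$ and $\Wr{n}{\Var(\cC)}\subseteq\Wr{n}{\cC}$ separately. The first is immediate: since $\cC\subseteq\Var(\cC)$, the generating set $\setm{G\wr\fS_n}{G\in\cC}$ of $\Wr{n}{\cC}$ is contained in the generating set $\setm{H\wr\fS_n}{H\in\Var(\cC)}$ of $\Wr{n}{\Var(\cC)}$, so the varieties they generate are comparable in the expected direction.

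For the nontrivial inclusion, I would first record how the operator $G\mapsto G\wr\fS_n$ — realized concretely as $G^n\rtimes\fS_n$, with $\fS_n$ permuting the $n$ coordinates of $G^n$ exactly as in the proof of Lemma~\ref{L:InvMnG} — interacts with the class operators $\mathrm H$, $\mathrm S$, $\mathrm P$ underlying Birkhoff's HSP Theorem. Concretely: (i) if $G$ is a subgroup of $K$, then $G^n\leq K^n$ and the $\fS_n$-action on $K^n$ restricts to the one on $G^n$, so $G\wr\fS_n$ is a subgroup of $K\wr\fS_n$; (ii) if $\pi\colon G\twoheadrightarrow H$ is surjective, then $\pi^n\colon G^n\twoheadrightarrow H^n$ is $\fS_n$-equivariant and therefore induces a surjective homomorphism $G\wr\fS_n\twoheadrightarrow H\wr\fS_n$; (iii) for any nonempty family $\vecm{G_i}{i\in I}$ of groups, writing $G=\prod_{i\in I}G_i$ and using the canonical identification $G^n\cong\prod_{i\in I}G_i^n$ — under which the coordinate-permuting action of $\fS_n$ on $G^n$ becomes the simultaneous coordinate-permuting action on the factors $G_i^n$ — the assignment $\pI{(g_i)_{i\in I},\gs}\mapsto\pI{(g_i,\gs)}_{i\in I}$ is a group embedding of $G\wr\fS_n$ into $\prod_{i\in I}(G_i\wr\fS_n)$; preservation of the semidirect-product multiplication is a one-line check, and injectivity uses that $I\neq\es$.

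With (i)--(iii) in hand, I would conclude as follows. Let $H\in\Var(\cC)$. By Birkhoff's HSP Theorem there are a family $\vecm{G_i}{i\in I}$ of members of $\cC$, a subgroup $K$ of $\prod_{i\in I}G_i$, and a surjective homomorphism $K\twoheadrightarrow H$; since $\cC\neq\es$ we may take $I$ nonempty, representing a trivial $H$ as a quotient of any fixed member of $\cC$. By (iii), $\pI{\prod_iG_i}\wr\fS_n$ embeds into $\prod_i(G_i\wr\fS_n)$, so it belongs to $\Wr{n}{\cC}$; by (i), $K\wr\fS_n$ is a subgroup of $\pI{\prod_iG_i}\wr\fS_n$, so it also belongs to $\Wr{n}{\cC}$; and by (ii), $H\wr\fS_n$ is a homomorphic image of $K\wr\fS_n$, so it belongs to $\Wr{n}{\cC}$ as well. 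Hence $\setm{H\wr\fS_n}{H\in\Var(\cC)}\subseteq\Wr{n}{\cC}$, which gives $\Wr{n}{\Var(\cC)}\subseteq\Wr{n}{\cC}$ and completes the argument. I expect the only point demanding genuine care to be the bookkeeping in (iii): one must check that permuting the $n$ ``slots'' of $\prod_iG_i$ agrees, under $G^n\cong\prod_iG_i^n$, with permuting the $n$ slots of every $G_i^n$ at once, and keep track of the nonemptiness of $I$; once those are pinned down, nothing else is more than a routine computation with the HSP operators.
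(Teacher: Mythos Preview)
Your proof is correct and follows essentially the same approach as the paper: both arguments boil down to checking that the class $\setm{G}{G\wr\fS_n\in\Wr{n}{\cC}}$ is closed under subgroups, quotients, and products (your items (i)--(iii)), and then invoking Birkhoff's HSP Theorem. The paper simply asserts these closures as ``easily seen'' and packages the conclusion as ``this class is a variety containing~$\cC$, hence containing~$\Var(\cC)$,'' whereas you unpack each closure explicitly and trace through an HSP decomposition of a given $H\in\Var(\cC)$; the substance is the same.
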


\begin{proof}
The class of all groups~$G$ such that $G\wr\fS_n\in\Wr{n}{\cC}$ contains~$\cC$, and it is easily seen to be closed under subgroups, products, and homomorphic images.
By Birkhoff's HSP Theorem, it is thus a variety of groups.
Since it contains~$\cC$, it contains~$\Var\cC$.
\end{proof}

\begin{lemma}\label{L:WrCompAdd}
Let~$I$ be a nonempty set and let~$\vecm{\cG_i}{i\in I}$ be a family of group varieties.
Then $\Wr{n}{\bigvee_{i\in I}\cG_i}=\bigvee_{i\in I}\Wr{n}{\cG_i}$.
\pup{The join is evaluated within the lattice~$\bLg$ of all varieties of groups}.
\end{lemma}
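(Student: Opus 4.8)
The plan is to derive the identity from Lemma~\ref{L:WRn}, which already isolates the only substantial point, namely that $\Wrn_{n}$ depends on its argument only through the variety that argument generates.

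First I would dispose of the empty-class bookkeeping built into Notation~\ref{Not:WRn}. Since $I\neq\es$ and every variety of groups contains the trivial group, the union $\bigcup_{i\in I}\cG_i$ is nonempty, so $\Wr{n}{\cdot}$ applied to it, or to any single $\cG_i$, genuinely produces a group variety and no empty-class exception is ever triggered; consequently the left-hand side $\Wr{n}{\bigvee_{i\in I}\cG_i}$ and the right-hand side $\bigvee_{i\in I}\Wr{n}{\cG_i}$ are both honest group varieties. With that in hand, the proof is a chain of equalities. By the definition of the join in~$\bLg$ we have $\bigvee_{i\in I}\cG_i=\Var\pI{\bigcup_{i\in I}\cG_i}$, so Lemma~\ref{L:WRn}, applied to the nonempty class $\bigcup_{i\in I}\cG_i$, gives
\[
\Wr{n}{\bigvee_{i\in I}\cG_i}=\Wr{n}{\bigcup_{i\in I}\cG_i}\,.
\]
Unravelling Notation~\ref{Not:WRn}, the right-hand side is the variety of groups generated by $\setm{G\wr\fS_n}{G\in\bigcup_{i\in I}\cG_i}=\bigcup_{i\in I}\setm{G\wr\fS_n}{G\in\cG_i}$.

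The last ingredient is the elementary fact that $\Var$ sends unions of classes of groups to joins of varieties: for any family $\vecm{\cC_i}{i\in I}$ of classes of groups, $\Var\pI{\bigcup_{i\in I}\cC_i}=\bigvee_{i\in I}\Var(\cC_i)$, the inclusion $\supseteq$ because each $\cC_i$ is contained in the left-hand variety, and $\subseteq$ because $\bigvee_{i\in I}\Var(\cC_i)$ is a variety containing $\bigcup_{i\in I}\cC_i$. Taking $\cC_i=\setm{G\wr\fS_n}{G\in\cG_i}$, I obtain $\Wr{n}{\bigcup_{i\in I}\cG_i}=\bigvee_{i\in I}\Var\pI{\setm{G\wr\fS_n}{G\in\cG_i}}=\bigvee_{i\in I}\Wr{n}{\cG_i}$, closing the chain.

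I do not expect any genuine obstacle: the mathematical content has been pushed into Lemma~\ref{L:WRn}, and what remains is the routine observation above plus the nonemptiness remark that keeps us out of the degenerate case of Notation~\ref{Not:WRn}. If one prefers to argue by two inclusions rather than a single chain, the easy direction $\bigvee_i\Wr{n}{\cG_i}\subseteq\Wr{n}{\bigvee_i\cG_i}$ follows from monotonicity of $\Wrn_{n}$ (itself a consequence of Lemma~\ref{L:WRn}), and the reverse inclusion is exactly the computation just given; but the chain of equalities is the cleanest presentation.
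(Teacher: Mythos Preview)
Your proposal is correct and follows essentially the same approach as the paper: the paper's one-line proof simply records that $\bigvee_{i\in I}\cG_i=\Var\pI{\bigcup_{i\in I}\cG_i}$ and invokes Lemma~\ref{L:WRn}, and your argument is a careful unpacking of exactly that, including the routine step that $\Var$ turns unions into joins and the nonemptiness check that keeps Notation~\ref{Not:WRn} out of its degenerate case.
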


\begin{proof}
Simply observe that $\bigvee_{i\in I}\cG_i=\Var\pI{\bigcup_{i\in I}\cG_i}$, and then use Lemma~\ref{L:WRn}.
\end{proof}

\begin{lemma}\label{L:RadDecrWr}
Let~$\cV$ be a variety of biases and let~$m$ and~$n$ be positive integers.
Then $\Wr{m}{\Rad{mn}{\cV}}$ is contained in $\Rad{n}{\cV}$.
\end{lemma}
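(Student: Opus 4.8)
The plan is to reduce the inclusion to a statement about the generators of $\Wr{m}{\Rad{mn}{\cV}}$, and then settle that statement by a single application of Lemma~\ref{L:GBEmbed2}. First I would dispose of the degenerate case: if $\Rad{mn}{\cV}=\es$, then by Notation~\ref{Not:WRn} the class $\Wr{m}{\Rad{mn}{\cV}}$ is empty, hence trivially contained in $\Rad{n}{\cV}$. So assume $\Rad{mn}{\cV}\neq\es$. Since $\Matp{n}{\pz{G}}$ embeds, as a bias, into $\Matp{mn}{\pz{G}}$ (for instance by Lemma~\ref{L:GBEmbed2}, as $n\leq mn$ and $G$ embeds into $G\wr\fS_{\ip{mn/n}}=G\wr\fS_m$ by the identity map; or just by the observation, recorded above Lemma~\ref{L:VarOrdbias}, that $\Rad{k+1}{\cV}\subseteq\Rad{k}{\cV}$), closure of $\cV$ under subalgebras gives $\Rad{mn}{\cV}\subseteq\Rad{n}{\cV}$. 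In particular $\Rad{n}{\cV}\neq\es$, so by Lemma~\ref{L:RadnVar} it is a variety of groups. As $\Wr{m}{\Rad{mn}{\cV}}$ is, by Notation~\ref{Not:WRn}, the variety of groups generated by $\setm{G\wr\fS_m}{G\in\Rad{mn}{\cV}}$, it is therefore enough to prove that $G\wr\fS_m\in\Rad{n}{\cV}$ whenever $G\in\Rad{mn}{\cV}$.

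The heart of the matter is then the following. Fix $G\in\Rad{mn}{\cV}$, that is, $\Matp{mn}{\pz{G}}\in\cV$; by Notation~\ref{Not:nRad} I must show $\Matp{n}{\pz{G\wr\fS_m}}\in\cV$. I would apply Lemma~\ref{L:GBEmbed2} with the symbols $G$, $H$, $m$, $n$ there replaced respectively by $G\wr\fS_m$, $G$, $n$, $mn$: the criterion becomes "$n\leq mn$, and $G\wr\fS_m$ embeds, as a subgroup, into $G\wr\fS_{\ip{mn/n}}$". Both conditions hold, the first since $m\geq 1$, the second since $\ip{mn/n}=m$ and the target group is literally $G\wr\fS_m$ itself (use the identity map). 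Hence $\Matp{n}{\pz{G\wr\fS_m}}$ embeds, as a bias, into $\Matp{mn}{\pz{G}}\in\cV$; since a variety is closed under subalgebras, $\Matp{n}{\pz{G\wr\fS_m}}\in\cV$, i.e., $G\wr\fS_m\in\Rad{n}{\cV}$, as required. An alternative route avoiding Lemma~\ref{L:GBEmbed2}: by Lemma~\ref{L:GBEmbed}, $\pz{G\wr\fS_m}$ embeds as a bias into $\Matp{m}{\pz{G}}$ (since $G\wr\fS_m$ embeds into itself), whence by Lemma~\ref{L:MnHom} the bias $\Matp{n}{\pz{G\wr\fS_m}}$ embeds into $\Matp{n}{\Matp{m}{\pz{G}}}$, which is isomorphic to $\Matp{mn}{\pz{G}}$ by Lemma~\ref{L:MmnS}.

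I do not expect a serious obstacle here; the argument is essentially just unwinding the definitions of $\Rad{n}{-}$ and $\Wr{m}{-}$ and invoking Lemma~\ref{L:GBEmbed2}. The only points requiring a little care are the bookkeeping around the empty class at the outset, and the justification that containment of the generators suffices — which rests on $\Rad{n}{\cV}$ genuinely being a variety of groups, itself immediate from Lemma~\ref{L:RadnVar} once one has noted $\Rad{mn}{\cV}\subseteq\Rad{n}{\cV}$.
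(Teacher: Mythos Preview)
Your proof is correct and follows essentially the same route as the paper: dispose of the empty case, reduce to showing the generators $G\wr\fS_m$ lie in $\Rad{n}{\cV}$, and settle that via the bias embedding $\Matp{n}{\pz{(G\wr\fS_m)}}\hookrightarrow\Matp{mn}{\pz{G}}$ from Lemma~\ref{L:GBEmbed2}. Your version is slightly more careful in explicitly verifying that $\Rad{n}{\cV}$ is nonempty (hence a variety) before invoking closure under the variety generated by the wreath products, whereas the paper cites Lemma~\ref{L:WRn} at the corresponding step.
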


\begin{proof}
If $mn$ is greater than the index of~$\cV$, then $\Rad{mn}{\cV}=\es$ and the result is trivial.
Suppose from now on that~$mn$ is less than or equal to the index of~$\cV$.
This ensures that $\Rad{mn}{\cV}$ is nonempty, so, by Lemma~\ref{L:RadnVar}, it is a variety of groups.
By Lemma~\ref{L:WRn}, it thus suffices to prove that $G\wr\fS_m$ belongs to $\Rad{n}{\cV}$, for each $G\in\Rad{mn}{\cV}$.
By Lemma~\ref{L:GBEmbed2}, $\Matp{n}{\pz{(G\wr\fS_m)}}$ embeds, as a bias, into $\Matp{mn}{\pz{G}}$, which, since $G\in\Rad{mn}{\cV}$, belongs to~$\cV$.
Hence, $G\wr\fS_m\in\Rad{n}{\cV}$, as required.
\end{proof}

\begin{notation}\label{Not:tbLb}
Denote by $\tbLg$ the set of all descending sequences $\vecm{\cG_n}{n\in\NN}$ of elements of $\bLg\cup\set{\es}$, such that only finitely~$\cG_n$ are nonempty, and $\Wr{m}{\cG_{mn}}$ is contained in $\cG_n$ for all positive integers~$m$ and~$n$.
We order~$\tbLg$ componentwise:
$\vecm{\cG_n}{n\in\NN}\leq\vecm{\cH_n}{n\in\NN}$ if $\cG_n\subseteq\cH_n$ for every $n\in\NN$.

For every $\cV\in\bLb\setminus\set{\Bis}$, we set
 \[
 \Rd{\cV}\eqdef\vecm{\Rad{n}{\cV}}{n\in\NN}\,.
 \]
Conversely, for every sequence $\vec{\cG}=\vecm{\cG_n}{n\in\NN}$ in $\tbLg$, we set
 \[
 \Mat{\vec{\cG}}\eqdef\bigvee_{n\in\NN}
 \Matp{n}{\pz{\cG_n}}\,,
 \]
where the join is evaluated within $\bLb\cup\set{\es}$, the empty join is by convention the trivial variety, and $\Matp{n}{\pz{\cG_n}}$ denotes the variety of biases generated by the class  $\setm{\Matp{n}{\pz{G}}}{G\in\cG_n}$ if $\cG_n\neq\es$, the empty class otherwise.
\end{notation}

A straightforward application of Lemma~\ref{L:WrCompAdd} yields the following.

\begin{lemma}\label{L:tbLgSubl}
The poset~$\tbLg$ is a sublattice of $(\bLg\cup\set{\es})^{\NN}$.
\end{lemma}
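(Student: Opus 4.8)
The plan is to verify directly that $\tbLg$ is closed, inside the lattice $(\bLg\cup\set{\es})^{\NN}$, under the componentwise meet $\vec{\cG}\wedge\vec{\cH}=\vecm{\cG_n\cap\cH_n}{n\in\NN}$ and the componentwise join $\vec{\cG}\vee\vec{\cH}=\vecm{\cG_n\vee\cH_n}{n\in\NN}$, for arbitrary sequences $\vec{\cG}=\vecm{\cG_n}{n\in\NN}$ and $\vec{\cH}=\vecm{\cH_n}{n\in\NN}$ in $\tbLg$, where we use the conventions $\cC\cap\es=\es$ and $\cC\vee\es=\cC$ valid in $\bLg\cup\set{\es}$. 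Since the order on $\tbLg$ is by definition the restriction of the componentwise order, closure under these two operations is exactly what is needed for $\tbLg$ to be a sublattice.

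For each of the two resulting sequences I would check the three defining properties of membership in $\tbLg$. Being descending is immediate, since $\cG_{n+1}\subseteq\cG_n$ and $\cH_{n+1}\subseteq\cH_n$ yield both $\cG_{n+1}\cap\cH_{n+1}\subseteq\cG_n\cap\cH_n$ and $\cG_{n+1}\vee\cH_{n+1}\subseteq\cG_n\vee\cH_n$. Finiteness of the set of nonempty components is also immediate: the support of $\vecm{\cG_n\cap\cH_n}{n\in\NN}$ is contained in that of $\vec{\cG}$, while the support of $\vecm{\cG_n\vee\cH_n}{n\in\NN}$ is the union of the supports of $\vec{\cG}$ and $\vec{\cH}$. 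The only point with any content is the wreath-product condition $\Wr{m}{\cK_{mn}}\subseteq\cK_n$. For the meet I would use only that $\Wr{m}{-}$ is monotone on $\bLg\cup\set{\es}$ (clear from Notation~\ref{Not:WRn}, since enlarging the generating class enlarges the generated variety, and $\Wr{m}{\es}=\es$), which gives $\Wr{m}{\cG_{mn}\cap\cH_{mn}}\subseteq\Wr{m}{\cG_{mn}}\cap\Wr{m}{\cH_{mn}}\subseteq\cG_n\cap\cH_n$. For the join I would apply Lemma~\ref{L:WrCompAdd} to the two-element index set, the degenerate cases where $\cG_{mn}$ or $\cH_{mn}$ equals $\es$ being trivial from $\Wr{m}{\es}=\es$, to obtain $\Wr{m}{\cG_{mn}\vee\cH_{mn}}=\Wr{m}{\cG_{mn}}\vee\Wr{m}{\cH_{mn}}\subseteq\cG_n\vee\cH_n$.

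I do not expect a genuine obstacle here: the join half of the statement is carried entirely by Lemma~\ref{L:WrCompAdd} and the meet half by the trivial monotonicity of $\Wr{m}{-}$, which is why the result is announced as a straightforward consequence of the former. The only places calling for a line of care are the bookkeeping of the adjoined bottom element $\es$, both within $\bLg\cup\set{\es}$ and within the piecewise definition of $\Wr{m}{-}$, and the preliminary remark that adjoining a bottom element to the lattice $\bLg$ and then forming a countable direct power still produces a lattice, so that speaking of a sublattice makes sense.
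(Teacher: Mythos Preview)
Your proposal is correct and is precisely the unpacking of what the paper means by ``a straightforward application of Lemma~\ref{L:WrCompAdd}'': the join case is carried by that lemma and the meet case by monotonicity of~$\Wrn_m$, with the descending and finite-support conditions being trivial. There is no difference in approach, only in the level of detail you have written out.
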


\begin{theorem}\label{T:VarOrdBias}
The assignments~$\Radn$ and~$\Matn$ define mutually inverse lattice isomorphisms between $\bLb\setminus\set{\Bis}$ and~$\tbLg$.
Consequently, $\bLb\cong\tbLg\sqcup\set{\infty}$.
\end{theorem}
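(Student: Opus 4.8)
The plan is to show that $\Radn$ maps $\bLb\setminus\set{\Bis}$ into~$\tbLg$, that $\Matn$ maps $\tbLg$ into $\bLb\setminus\set{\Bis}$, that the two assignments are mutually inverse, and that both are order-preserving. Since an order-preserving bijection whose inverse is order-preserving is automatically a lattice isomorphism, and since $\Bis$ is the largest element of~$\bLb$ while $\tbLg$ is a lattice (Lemma~\ref{L:tbLgSubl}), the map sending $\Bis$ to a new top~$\infty$ will then extend the resulting isomorphism $\bLb\setminus\set{\Bis}\cong\tbLg$ to $\bLb\cong\tbLg\sqcup\set{\infty}$.

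First I would check that $\Rd{\cV}\in\tbLg$ whenever $\cV\in\bLb\setminus\set{\Bis}$: each $\Rad{n}{\cV}$ is empty or a group variety by Lemma~\ref{L:RadnVar}; the sequence $\vecm{\Rad{n}{\cV}}{n\in\NN}$ is descending, as observed right after Lemma~\ref{L:RadnVar}; only finitely many of its terms are nonempty, since if $\Rad{n}{\cV}\neq\es$ then it is a group variety and hence contains the trivial group, so $\fI_n\in\cV$, while~$\cV$ has finite index by Theorem~\ref{T:VarMnG}; and $\Wr{m}{\Rad{mn}{\cV}}\subseteq\Rad{n}{\cV}$ for all $m,n$ by Lemma~\ref{L:RadDecrWr}. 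Conversely, for $\vec{\cG}\in\tbLg$ only finitely many~$\cG_n$ are nonempty, so $\Mat{\vec{\cG}}=\bigvee_n\Matp{n}{\pz{\cG_n}}$ is a finite join, hence a variety; and since each $\Matp{n}{\pz{G}}$ has index~$n$ (Corollary~\ref{C:ind2eqn}), every $\Matp{n}{\pz{\cG_n}}$ is contained in the variety~$\Bis_n$ of biases of index at most~$n$, so $\Mat{\vec{\cG}}$ has finite index and is therefore proper.

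That $\Mat{\Rd{\cV}}=\cV$ for proper~$\cV$ is immediate from Theorem~\ref{T:VarMnG}: the fully group-matricial members of~$\cV$ are precisely the biases $\Matp{n}{\pz{G}}$ with $G\in\Rad{n}{\cV}$, and they generate~$\cV$. The crux is the identity $\Rad{n}{\Mat{\vec{\cG}}}=\cG_n$ for $\vec{\cG}\in\tbLg$. I would present $\Mat{\vec{\cG}}$ as the variety generated by the bounded family $\setm{\Matp{k}{\pz{H}}}{\cG_k\neq\es,\ H\in\cG_k}$ (only finitely many~$k$ occur, so the orders are bounded) and apply Lemma~\ref{L:VarOrdbias}: it yields that $\Matp{n}{\pz{G}}\in\Mat{\vec{\cG}}$ if{f} $\cG_n\neq\es$ and $G$ lies in the group variety $\bigvee_{k\geq n,\ \cG_k\neq\es}\Wr{\ip{k/n}}{\cG_k}$; equivalently, $\Rad{n}{\Mat{\vec{\cG}}}$ equals this join when $\cG_n\neq\es$ and is empty otherwise. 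The summand $k=n$ contributes $\Wr{1}{\cG_n}=\cG_n$, so the join contains~$\cG_n$; and for any $k\geq n$, setting $q=\ip{k/n}\geq1$ gives $qn\leq k$, hence $\cG_k\subseteq\cG_{qn}$, hence $\Wr{q}{\cG_k}\subseteq\Wr{q}{\cG_{qn}}\subseteq\cG_n$, the last inclusion being the defining condition of~$\tbLg$. Thus the join equals~$\cG_n$, and $\Rad{n}{\Mat{\vec{\cG}}}=\cG_n$ in all cases.

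Finally, $\Radn$ is order-preserving because $\cV\subseteq\cW$ gives $\Rad{n}{\cV}\subseteq\Rad{n}{\cW}$, and $\Matn$ is order-preserving because $\vec{\cG}\leq\vec{\cH}$ gives $\Matp{n}{\pz{\cG_n}}\subseteq\Matp{n}{\pz{\cH_n}}$ for every~$n$ and hence $\Mat{\vec{\cG}}\subseteq\Mat{\vec{\cH}}$. This yields the lattice isomorphism $\bLb\setminus\set{\Bis}\cong\tbLg$, and the extension to $\bLb\cong\tbLg\sqcup\set{\infty}$ is as described in the first paragraph. I expect the delicate part to be the identity $\Rad{n}{\Mat{\vec{\cG}}}=\cG_n$ — specifically, casting $\Mat{\vec{\cG}}$ in exactly the form required by Lemma~\ref{L:VarOrdbias}, and recognizing that the descent condition built into~$\tbLg$ is precisely what forces each wreath-product summand $\Wr{\ip{k/n}}{\cG_k}$ back inside~$\cG_n$; everything else is routine bookkeeping once those pieces are in place.
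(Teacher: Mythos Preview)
Your proposal is correct and follows essentially the same route as the paper: both verify that $\Radn$ lands in~$\tbLg$ via Lemma~\ref{L:RadDecrWr} and the finite-index bound, both get $\Mat{\Rd{\cV}}=\cV$ from Theorem~\ref{T:VarMnG}, and both reduce $\Rad{n}{\Mat{\vec{\cG}}}=\cG_n$ to Lemma~\ref{L:VarOrdbias} together with the observation that $\Wr{\ip{k/n}}{\cG_k}\subseteq\Wr{\ip{k/n}}{\cG_{n\ip{k/n}}}\subseteq\cG_n$ from the defining condition of~$\tbLg$. Your write-up is slightly more explicit than the paper's in checking that $\Mat{\vec{\cG}}$ is proper and that both maps are order-preserving, but the substance is identical.
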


\begin{proof}
Let~$\cV$ be a proper variety of biases.
Denote by~$h$ the index of~$\cV$ and set $\cG_n=\Rad{n}{\cV}$ for every $n\in\NN$.
It follows from the above, together with Lemma~\ref{L:RadDecrWr}, that the sequence $\Rd{\cV}=\vecm{\cG_n}{n\in\NN}$ belongs to~$\tbLg$.
It follows from the definition of~$\cG_n$ that $\Matp{n}{\pz{\cG_n}}$ is contained in~$\cV$, for every $n\in\NN$.
Conversely, since, by Theorem~\ref{T:VarMnG}, the variety~$\cV$ is generated by its fully group-matricial members, it is contained in the join of all $\Matp{n}{\pz{\cG_n}}$.
This proves that $\cV=\Mat{\Rd{\cV}}$.

Let, conversely, $\vec{\cG}=\vecm{\cG_n}{n\in\NN}\in\tbLg$\,, and denote by~$h$ the largest nonnegative integer such that $\cG_n\neq\es$ whenever $1\leq n\leq h$.
(The value $h=0$ is possible, in which case all $\cG_n=\es$.)
The class $\cV=\Mat{\vec{\cG}}$ is, by definition, a variety of biases.
Set $\cG'_n=\Rad{n}{\cV}$ for every $n\in\NN$.
By definition, $\cG_n\subseteq\cG'_n$.
Let, conversely, $G\in\cG'_n$; that is, $\Matp{n}{\pz{G}}$ belongs to the variety of biases generated by all $\Matp{k}{\pz{H}}$, where $1\leq k\leq h$ and $H\in\cG_k$.
By Lemma~\ref{L:VarOrdbias}, $G$ belongs to the variety of groups generated by all $H\wr\fS_{\ip{k/n}}$, where $n\leq k\leq h$ and $H\in\cG_k$.
Now for each such pair $(k,H)$,
 \begin{align*}
 H\wr\fS_{\ip{k/n}}&\in\Wr{\ip{k/n}}{\cG_k}
 &&(\text{by the definition of }\Wr{\ip{k/n}}{\cG_k})\\
 &\subseteq\Wr{\ip{k/n}}{\cG_{n\ip{k/n}}}
 &&(\text{because }n\ip{k/n}\leq k)\\
 &\subseteq\cG_{n}
 &&(\text{because }\vec{\cG}\in\tbLg)\,.
 \end{align*}
Hence, $G\in\cG_n$, which completes the proof that $\Rd{\Mat{\vec{\cG}}}=\vec{\cG}$.
\end{proof}

%

In particular, since~$\bLg$ is Arguesian (cf. Section~\ref{S:Basic}), so is~$\bLb$.
Similarly, $\bLb$ satisfies the duals of Haiman's identities from~\cite{Haim1985}.
However, all this is already a consequence of congruence-permutability, which we established, for biases, in \cite[Section~3-4]{WBIS}.
Since the problem whether every lattice identity, satisfied by all normal subgroup lattices of groups, also holds in the congruence lattice of every algebra in a congruence-permutable variety (or even in the congruence lattice of any loop), is still open, it is not clear at this point whether Theorem~\ref{T:VarOrdBias} would yield new identities valid in~$\bLb$.

\begin{corollary}\label{C:VarOrdBias2}
The assignment $\cG\mapsto\Matp{1}{\pz{\cG}}$ defines an isomorphism from~$\bLg$ onto a convex sublattice of~$\bLb$, with smallest element the variety of all idempotent biases \pup{i.e., generalized Boolean algebras}.
\end{corollary}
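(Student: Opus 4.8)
The plan is to obtain the statement as a corollary of Theorem~\ref{T:VarOrdBias}, by pinning down the image of the assignment $\cG\mapsto\Matp{1}{\pz{\cG}}$ inside~$\bLb$ and recognizing it as an interval. First I would check that, for every variety of groups~$\cG$, the sequence $\seq{\cG,\es,\es,\dots}$, with~$\cG$ in position~$1$ and~$\es$ in every other position, lies in~$\tbLg$: it is decreasing, only one of its terms is nonempty, and the condition $\Wr{m}{\cG_{mn}}\subseteq\cG_n$ holds trivially---for $mn\geq2$ because $\Wr{m}{\es}=\es$, and for $m=n=1$ because $\fS_1$ is trivial, so that $\Wr{1}{\cG}=\Var\cG=\cG$. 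By the definition of the operator~$\Matn$ (Notation~\ref{Not:tbLb}), $\Mat{\seq{\cG,\es,\es,\dots}}=\Matp{1}{\pz{\cG}}$. Hence, by Theorem~\ref{T:VarOrdBias}, $\Matp{1}{\pz{\cG}}$ is a \emph{proper} variety of biases, and $\Rd{\Matp{1}{\pz{\cG}}}=\seq{\cG,\es,\es,\dots}$; in particular $\Rad{1}{\Matp{1}{\pz{\cG}}}=\cG$ and $\Rad{n}{\Matp{1}{\pz{\cG}}}=\es$ for all $n\geq2$.

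Next I would identify the image of $\cG\mapsto\Matp{1}{\pz{\cG}}$ with the interval $\rI{\mathcal{O},\Bis_1}$ of~$\bLb$, where~$\Bis_1$ is the (proper) variety of all biases of index~$\leq1$, and $\mathcal{O}\eqdef\Matp{1}{\pz{\cG}}$ for~$\cG$ the least variety of groups, i.e., the variety of biases generated by the two-element bias $\two=\Matp{1}{\pz{\set{1}}}$. That the image is contained in $\rI{\mathcal{O},\Bis_1}$ is immediate: $\set{1}$ lies in every group variety~$\cG$, so $\two\in\Matp{1}{\pz{\cG}}$ and hence $\mathcal{O}\subseteq\Matp{1}{\pz{\cG}}$; and each $\pz{G}$ has index~$1$, so $\Matp{1}{\pz{\cG}}\subseteq\Bis_1$. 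For the reverse inclusion, let~$\cU$ be a variety of biases with $\mathcal{O}\subseteq\cU\subseteq\Bis_1$. Since $\Bis_1\neq\Bis$, we have $\cU\neq\Bis$, so Theorem~\ref{T:VarOrdBias} applies to~$\cU$; as~$\cU$ has index~$\leq1$ and $\fI_n$ embeds into $\Matp{n}{\pz{G}}$ for every group~$G$, Corollary~\ref{C:ind2eqn2} forces $\Rad{n}{\cU}=\es$ for all $n\geq2$, while $\Rad{1}{\cU}$ is, by Lemma~\ref{L:RadnVar}, a variety of groups (nonempty, since $\two\in\mathcal{O}\subseteq\cU$ gives $\set{1}\in\Rad{1}{\cU}$). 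Hence $\Rd{\cU}=\seq{\Rad{1}{\cU},\es,\es,\dots}=\Rd{\Matp{1}{\pz{\Rad{1}{\cU}}}}$, so $\cU=\Matp{1}{\pz{\Rad{1}{\cU}}}$ by Theorem~\ref{T:VarOrdBias}, which puts~$\cU$ in the image. Being an interval of the lattice~$\bLb$, the image is a convex sublattice of~$\bLb$; this interval description also disposes of the only mild subtlety, namely that passing from ``sublattice of $\bLb\setminus\set{\Bis}$'' to ``sublattice of~$\bLb$'' could in principle require excluding the top element~$\Bis$, which is impossible here since~$\Bis$ cannot lie below the proper variety~$\Bis_1$.

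Finally I would verify that $\cG\mapsto\Matp{1}{\pz{\cG}}$ is a lattice isomorphism from~$\bLg$ onto $\rI{\mathcal{O},\Bis_1}$: it is surjective by the previous step, injective because $\Rad{1}{\Matp{1}{\pz{\cG}}}=\cG$, order-preserving because enlarging the generating class enlarges the generated variety, and order-reflecting because $\Rad{1}{\cdot}$ is visibly monotone. Its least value is $\mathcal{O}$, the variety of biases generated by~$\two$; since a bias satisfying $\vx=\vx^2$ is precisely a generalized Boolean algebra, and every generalized Boolean algebra embeds into a power of~$\two$, this is exactly the variety of all idempotent biases, as asserted. I do not expect a genuine obstacle here: the content is essentially a direct read-off from Theorem~\ref{T:VarOrdBias}, the computation of $\Rd{\Matp{1}{\pz{\cG}}}$ reducing to the definition of~$\Matn$ and the triviality of~$\fS_1$.
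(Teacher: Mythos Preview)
Your proposal is correct and follows the natural route: the paper states Corollary~\ref{C:VarOrdBias2} without proof, treating it as an immediate consequence of Theorem~\ref{T:VarOrdBias}, and your argument is precisely the expected unpacking of that implication---identifying the image with the interval of proper bias varieties whose radical sequence has the form $\seq{\cG,\es,\es,\dots}$, and checking that the bottom of that interval is the variety of generalized Boolean algebras.
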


In particular, by Ol'\v{s}anski\u{\i}'s theorem quoted in Section~\ref{S:Basic}, $\bLb$ has cardinality the continuum.


\providecommand{\noopsort}[1]{}\def\cprime{$'$}
  \def\polhk#1{\setbox0=\hbox{#1}{\ooalign{\hidewidth
  \lower1.5ex\hbox{`}\hidewidth\crcr\unhbox0}}}
  \providecommand{\bysame}{\leavevmode\hbox to3em{\hrulefill}\thinspace}
\providecommand{\MR}{\relax\ifhmode\unskip\space\fi MR }
\providecommand{\MRhref}[2]{%
  \href{http://www.ams.org/mathscinet-getitem?mr=#1}{#2}
}
\providecommand{\href}[2]{#2}

\end{document}